\documentclass[11pt,reqno]{amsart}
\usepackage[utf8]{inputenc}
\usepackage[T1]{fontenc}
\usepackage{amsmath,amssymb,amsthm,mathtools}
\usepackage[margin=1.15in]{geometry}
\usepackage{xcolor}
\usepackage{booktabs}
\usepackage{array}

\usepackage[expansion=false]{microtype}
\usepackage[colorlinks=true,linkcolor=blue!55!black,citecolor=blue!55!black,urlcolor=blue!55!black]{hyperref}

\theoremstyle{plain}
\newtheorem{theorem}{Theorem}[section]
\newtheorem{proposition}[theorem]{Proposition}
\newtheorem{conjecture}[theorem]{Conjecture}
\newtheorem{lemma}[theorem]{Lemma}
\newtheorem{corollary}[theorem]{Corollary}
\theoremstyle{definition}
\newtheorem{definition}[theorem]{Definition}
\newtheorem{remark}[theorem]{Remark}

\newtheorem*{theoremA}{Theorem A}
\newtheorem*{theoremB}{Theorem B}
\newtheorem*{theoremC}{Theorem C}
\newtheorem*{theoremD}{Theorem D}
\newtheorem*{theoremE}{Theorem E}

\DeclareMathOperator{\supp}{supp}
\DeclareMathOperator{\Res}{Res}
\DeclareMathOperator{\Des}{Des}
\DeclareMathOperator{\chr}{char}
\newcommand{\bF}{\mathbb{F}}
\newcommand{\bQ}{\mathbb{Q}}
\newcommand{\bZ}{\mathbb{Z}}
\newcommand{\bA}{\mathbb{A}}

\newcommand{\CA}{\mathrm{CA}}
\newcommand{\bad}{\mathcal{B}}

\begin{document}

\title[A descent-set obstruction for Casas--Alvero]{A descent-set obstruction\\ for the Casas--Alvero conjecture}

\author{Mohammad F. Marashdeh}
\address{Department of Mathematics, Faculty of Science, Mutah University, Karak, Jordan}
\email{marashdeh@mutah.edu.jo}

\subjclass[2020]{Primary 12E05; Secondary 11C08, 13P10, 11B65, 14G17}
\keywords{Casas--Alvero conjecture, Hasse derivative, resultant, descent set, alternating permutation, bad prime, Kummer's theorem}

\begin{abstract}
The Casas--Alvero conjecture asserts that a monic polynomial $f$ of degree $d$ over a field of
characteristic zero sharing a non-constant factor with each of $f',\dots,f^{(d-1)}$ is the $d$-th
power of a linear polynomial. Two reductions are carried out. Once a root is translated to the
origin, the condition at index $i$ is vacuous unless the coefficient $a_{d-i}$ is nonzero, so the
Casas--Alvero locus stratifies by the support of the centred normal form; prescribing in addition
which root realises each surviving condition makes the system unit lower triangular and eliminates
the coefficients of $f$ entirely. When one root suffices, what remains is a single integer, and
that integer is MacMahon's determinant: the number of permutations of $\{1,\dots,d\}$ with descent
set the support of $f$. Since every subset is a descent set, no counterexample has fewer than three
recycled roots; in characteristic $p$ a prime dividing one of these counts but none of the accompanying
integers $z_{i}$ is a bad prime, the largest count in degree $d$ being the Euler zigzag number
$A_{d}$, so that the irregular prime $691$ is bad for $d=11$. The stratification also yields short
proofs of the two characteristic-$p$ propositions on which all known cases rest; supports of two
elements are settled separately in every characteristic, whence a counterexample has at least four
terms in centred normal form and a further Gr\"obner-free criterion for bad primes.
\end{abstract}

\maketitle

\section{Introduction}\label{sec:intro}

Let $K$ be a field and let
\[
  f(x)\;=\;x^{d}+a_{1}x^{d-1}+\cdots+a_{d-1}x+a_{d}\;\in\;K[x],\qquad d\ge 1 .
\]
Following the terminology that has become standard, $f$ is a \emph{Casas-Alvero polynomial} (or
\emph{$\CA$-polynomial}) if for every $i\in\{1,\dots,d-1\}$ the polynomial $f$ has a non-constant
common factor with its $i$-th Hasse derivative $H_i(f)$, and $f$ is not the $d$-th power of a linear
polynomial. In characteristic zero $H_i(f)=f^{(i)}/i!$, so the condition reads: for every
$i$ there is a root of $f$ which is also a root of $f^{(i)}$.

\begin{conjecture}[Casas--Alvero]\label{conj:CA}
Let $K$ be a field of characteristic $0$. Then there is no $\CA$-polynomial over $K$; equivalently,
if $f\in K[x]$ is monic of degree $d$ and $\gcd\bigl(f,f^{(i)}\bigr)\neq 1$ for
$i=1,\dots,d-1$, then $f=(x-\alpha)^{d}$ for some $\alpha\in K$.
\end{conjecture}

The conjecture arose from Casas-Alvero's study of higher order polar germs of plane curve
singularities \cite{CasasAlvero2001} and has remained open since. It is trivial for $d\le2$. A
count of parameters suggests that the general case should not be hard: the hypothesis imposes $d-1$
conditions on the
$d-1$ essential coefficients $a_{1},\dots,a_{d-1}$, while every solution occurs in a one-parameter
family under scaling, so the system is overdetermined by one. No argument is known that makes this
count into a proof, and the count is delicate rather than robust---Draisma and de Jong showed that
omitting any single one of the $d-1$ conditions produces nontrivial solutions
\cite{DraismaDeJong2011}, see also \cite[Thm.~9]{SchaubSpivakovsky2025}. The system is therefore set-theoretically minimal: no proof
can proceed by discarding a condition.

Write $\CA_{d,p}$ for the assertion ``there is no $\CA$-polynomial of degree $d$ over
$\overline{\bF}_p$'' and $\CA_{d,0}$ for the characteristic-zero statement in degree $d$. Following
\cite{CLO2014} we call a prime $p$ a \emph{bad prime for $d$} if $\CA_{d,p}$ fails, and we write
\[
  \bad(d)\;=\;\{\,p \text{ prime}\;:\;\CA_{d,p}\text{ is false}\,\}.
\]
All known cases of Conjecture~\ref{conj:CA} rest on two propositions of Graf von Bothmer, Labs,
Schicho and van de Woestijne \cite[Prop.~2.2, 2.5, 2.7]{BLSW2007}:

\begin{itemize}
\item[(T)] \emph{Transfer.} If $\bad(d)\neq\{\text{all primes}\}$, i.e.\ if $\CA_{d,p}$ holds for one
prime $p$, then $\CA_{d,0}$ holds. (The Casas--Alvero locus is a closed subscheme of a weighted
projective space over $\bZ$; properness forces its image in $\operatorname{Spec}\bZ$ to be closed,
hence either all of $\operatorname{Spec}\bZ$ or a finite set of closed points.)
\item[(D)] \emph{Descent.} If $p^{e}\parallel d$ and $\CA_{d/p^{e},p}$ holds, then $\CA_{d,p}$
holds. In particular $\CA_{p^{k},p}$ holds.
\end{itemize}

Combining (T) and (D) gives: \emph{if $p\notin\bad(n)$ then $\CA_{np^{k},0}$ holds for all $k\ge0$}.
The known cases of the conjecture are exactly the degrees reachable this way, together with the single
degree $d=12$, established by direct computation. Explicitly, $\CA_{d,0}$ is known for
$d=p^{k}$ and $d=2p^{k}$ \cite{BLSW2007}; for $d=3p^{k}$ with $p\neq2$ and $d=4p^{k}$ with $p\notin\{3,5,7\}$, which follow from (T) and (D) once $\bad(3)=\{2\}$ and $\bad(4)=\{3,5,7\}$ are known \cite{DraismaDeJong2011,ChellaliSalinier2012,CLO2014}; for $d=5p^{k}$ with $p\notin\bad(5)$
\cite{ChellaliSalinier2012}; for $d=6p^{k}$ and $d=7p^{k}$ with $p\notin\bad(6)$, resp.\
$p\notin\bad(7)$, and for $d=12$ \cite{CLO2014}; and for $d\le7$ by direct elimination
\cite{DiazTocaGonzalezVega2006}. The bad prime sets have been determined only for $d\le 7$:
$\bad(3)=\{2\}$, $\bad(4)=\{3,5,7\}$, $\bad(5)=\{2,3,7,11,131,193,599,3541,8009\}$, while
$|\bad(6)|=53$ and $|\bad(7)|=366$ \cite[Thm.~4]{CLO2014}. The smallest degree for which the
conjecture is open is $d=20$.

Further progress is constrained by two structural facts. To settle a new degree $d$ by (T) and
(D) one must factor $d=np^{k}$ with $p^{k}\parallel d$ and prove $p\notin\bad(n)$; for $d$ of
moderate size this forces $p$ to be small, and small primes are typically bad. A sufficient condition for badness is the following criterion of Schaub and
Spivakovsky \cite[Cor.~8]{SchaubSpivakovsky2024}, which they deduce from a theorem on distinguished
monomials attributed to de Frutos Mar\'in \cite{deFrutos2012}: \emph{if $p$ divides $\binom{d}{m}-1$ for some
$1\le m\le d-1$, then $p\in\bad(d)$.} Schaub and Spivakovsky also proved an effective upper bound for the elements of $\bad(n)$ once $\CA_{n,0}$ is known, and in
\cite{SchaubSpivakovsky2025} they proved the corresponding ideal-theoretic non-redundancy for the
three top resultants: $R_{i}\notin\sqrt{(R_{j}:j\neq i)}$ for $i\in\{d-3,d-2,d-1\}$.

A proof of Conjecture~\ref{conj:CA} in full generality is announced in a preprint of S.~Ghosh
\cite{Ghosh2026}, by means of Koszul homology; as that work has not been independently verified,
the conjecture is treated here as open. The results below are unconditional, and several of them
concern positive characteristic, where the conjecture is false.

On the structure of hypothetical counterexamples the strongest published results are those of
\cite{CLO2014}: a counterexample has no root of multiplicity $\ge d-2$
\cite[Prop.~10]{CLO2014}, has at least two distinct roots in the interior of the convex hull of
its root set \cite[Prop.~12]{CLO2014}, and has at least five distinct roots
\cite[Thm.~13]{CLO2014} (reproved in \cite[Thm.~10]{SchaubSpivakovsky2025}), and has at least three
recycled roots, i.e.\ $\mathrm{type}(f)\ge2$ in the terminology of \cite[\S1.2]{CLO2014}, where
$\mathrm{type}(f)$ is one less than the number of recycled roots, by a result of Draisma and Knopper
recorded in \cite[Prop.~7]{CLO2014}. For polynomials
with only real roots there are sharper necessary and sufficient conditions in terms of
Abel--Gontcharov interpolation \cite{Yakubovich2014}; extensions of the problem beyond polynomials
are studied in \cite{CimaGasullManosas2020}.

Every structural constraint recalled above measures a hypothetical counterexample by its
\emph{roots}: their number, their multiplicities, their position in the convex hull. The present
paper measures it by its \emph{coefficients} instead. The two viewpoints are equivalent, but the second exposes an arithmetic obstruction that the first
conceals, and it does so through an elementary observation about which of the Casas--Alvero
conditions carry information.

Assume $\chr K\nmid d$. If $f$ is monic of degree $d$ and satisfies the Casas--Alvero condition at
$i=d-1$, then the unique root of $H_{d-1}(f)=dx+a_1$ is a root of $f$; translating that root to the
origin puts $f$ into the \emph{centred normal form}
\[
  f \;=\; x^{d}+\sum_{m=2}^{d-1}a_{m}\,x^{d-m},\qquad a_{1}=a_{d}=0 ,
\]
and we set $\supp(f)=\{m: a_m\neq0\}\subseteq\{2,\dots,d-1\}$ and $\tau(f)=\#\supp(f)$, so that $f$
has exactly $\tau(f)+1$ terms. The trivial solution $x^{d}$ is the unique one with $\tau=0$. Two
invariants of a counterexample occur below and are unrelated. One is $\tau(f)$. The other is the
number of \emph{recycled roots}: the cardinality of a minimal set of roots realising all $d-1$
conditions \cite[\S1.2]{CLO2014}.

\begin{theoremA}[Support reduction; Theorem~\ref{thm:vacuity}, Corollaries~\ref{cor:suppred} and \ref{cor:strat}]
Let $K$ be a field and let $f\in K[x]$ be monic of degree $d$ with $f(0)=0$, written as
$f=x^{d}+\sum_{m\ge1}a_{m}x^{d-m}$. Then for every index $i$ with $a_{d-i}=0$ the Casas--Alvero
condition at $i$ holds automatically. Consequently, if $K=\overline K$, $\chr K\nmid d$ and $f$ is
in centred normal form, then
\[
  f\ \text{is a $\CA$-polynomial or $x^d$}\iff \Res\bigl(f,H_{d-m}(f)\bigr)=0\quad\text{for every }m\in\supp(f).
\]
If in addition $K$ is algebraically closed, the Casas--Alvero locus in
$\bA^{d-2}=\operatorname{Spec}K[a_2,\dots,a_{d-1}]$ is the
disjoint union of the $2^{d-2}$ locally closed \emph{support strata} $W_{S}$, $S\subseteq\{2,\dots,d-1\}$, where $W_S$ is
cut out inside the torus $\mathbb{G}_m^{S}$ by exactly $\#S$ weighted-homogeneous equations in
$\#S$ unknowns.
\end{theoremA}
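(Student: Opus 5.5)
The plan is to start from the Hasse derivative formula and read off the constant term. For $f=\sum_{k}c_k x^k$ we have $H_i(f)=\sum_{k\ge i}\binom{k}{i}c_k x^{k-i}$, so $H_i(f)(0)=c_i$. Since $f=x^d+\sum_{m\ge1}a_m x^{d-m}$, the coefficient of $x^i$ is $c_i=a_{d-i}$ for $1\le i\le d-1$. If $a_{d-i}=0$, then $0$ is a common root of $f$ (because $f(0)=0$) and of $H_i(f)$, so $x$ is a common non-constant factor. This proves the vacuity statement over any field, with no hypothesis on the characteristic.

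For the equivalence, assume $K=\overline K$, $\chr K\nmid d$, and $f$ in centred normal form, so $a_1=a_d=0$. We first check that the condition at $i=d-1$ holds automatically: $H_{d-1}(f)=dx+a_1=dx$, and it shares the root $0$ with $f$. This is also the vacuity statement with $m=1\notin\supp(f)$. By the vacuity statement, every index $i=d-m$ with $m\notin\supp(f)$, $2\le m\le d-1$, is automatic as well. Hence the Casas--Alvero conditions for $i=1,\dots,d-1$ are equivalent to the conditions at $i=d-m$ for $m\in\supp(f)$. Over an algebraically closed field, a common non-constant factor is equivalent to a common root, which in turn is equivalent to $\Res(f,H_{d-m}(f))=0$.

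One subtlety in the resultant step is the degree. $H_{d-m}(f)$ has leading coefficient $\binom{d}{d-m}$, which may vanish in characteristic $p$. In that case I will take the resultant with respect to the formal degree $m$. Since $f$ is monic, the standard identity $\Res(f,g)=\prod_{f(\alpha)=0}g(\alpha)$ still holds up to sign, and it vanishes exactly when there is a common root. This also covers the case $g=0$.

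Finally, ``$\CA$-polynomial or $x^d$'' is exactly the statement that all conditions hold, since $x^d$ is the only $d$-th power of a linear polynomial in centred normal form. Indeed, $(x-\alpha)^d$ has $a_1=-d\alpha$, and $a_1=0$ with $\chr K\nmid d$ forces $\alpha=0$.

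For the stratification, the Casas--Alvero locus is $V=\{a\in\bA^{d-2}:\text{all conditions hold}\}$. The torus strata $T_S=\{a_m\ne0\iff m\in S\}$ are locally closed and partition $\bA^{d-2}$, so $V=\bigsqcup_S W_S$ with $W_S=V\cap T_S$. On $T_S$, the previous paragraph shows that $V$ is cut out by the $\#S$ equations $\Res(f,H_{d-m}(f))=0$, $m\in S$, in the coordinates $a_m$, $m\in S$; the remaining coordinates are set to zero. For weighted homogeneity, give $a_m$ weight $m$, so that $f(\lambda x)=\lambda^d f_{\lambda\cdot a}(x)$. Both $f$ and $H_i(f)$ are then weighted homogeneous in $(x,a)$, and the resultant, being a polynomial in the coefficients with the standard isobaric property, is weighted homogeneous in $a$.

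The main care point is this last isobaric claim together with the degree-drop issue for $H_{d-m}$ in characteristic $p$. Alternatively, I would verify it directly via $\Res=\pm\prod_\alpha H_{d-m}(f)(\alpha)$ under the scaling $\alpha\mapsto\lambda\alpha$.
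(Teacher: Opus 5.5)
Your proposal is correct and follows essentially the paper's route: vacuity from $H_i(f)(0)=a_{d-i}$ together with $f(0)=0$; the resultant criterion via the product formula $\Res(f,g)=\prod_{f(\alpha)=0}g(\alpha)$ for monic $f$, which is also how the paper handles the possible degree drop of $H_{d-m}(f)$ in characteristic $p$; and weighted homogeneity of weighted degree $dm$ from how the roots scale. One harmless slip: if $\lambda\cdot a$ means $(\lambda^{m}a_m)$, the scaling identity should read $f(\lambda x)=\lambda^{d}f_{\lambda^{-1}\cdot a}(x)$, and the direction of the action does not affect the homogeneity argument.
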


Since $W_{S}$ is stable under the weighted action $a_{m}\mapsto t^{m}a_{m}$, a single point of
$W_{S}$ produces a curve, so every stratum is overdetermined by exactly one: the heuristic recalled
above survives passage to sub-systems.

Theorem~A yields short proofs of $\CA_{p^{k},p}$ and of (D) (Propositions~\ref{prop:BLSW1} and
\ref{prop:BLSW2}): the machinery of \cite{BLSW2007} in characteristic $p$ becomes a consequence
of Kummer's theorem on carries.

\begin{theoremB}[Theorems~\ref{thm:triangular} and \ref{thm:pascal}; Corollaries~\ref{cor:uni-criterion}, \ref{cor:uni-char0}, \ref{cor:uni-badprimes}]
Write $S=\supp(f)=\{m_{1}<\cdots<m_{s}\}$ and $m_{0}=0$, and prescribe in addition the
\emph{scenario}, that is, which root realises which surviving condition. Then the Casas--Alvero
system becomes unit lower triangular in the coefficients of $f$: these are determined as integer
polynomials in the recycled roots, and there remain $t$ equations in $t-1$ unknowns modulo the
weighted $\mathbb G_{m}$-action, where $t$ is the number of nonzero recycled roots. For $t=1$ no unknowns remain, and with $z_{0}=1$ the
coefficients are forced to be $a_{m_{i}}=z_{i}\theta^{m_{i}}$, where $\theta$ is the single nonzero
recycled root and
\[
  z_{k}=-\sum_{i<k}\binom{d-m_{i}}{m_{k}-m_{i}}z_{i},
\]
such an $f$ exists over $K$ if and only if $\sum_{i}z_{i}=0$ in $K$ and no $z_{i}$
vanishes, and
\[
  (-1)^{s}\sum_{i=0}^{s}z_{i}\;=\;\det\Bigl[\tbinom{d-m_{i}}{m_{j+1}-m_{i}}\Bigr]_{i,j=0}^{s}
  \;=\;\beta_{d}(S),
\]
the number of permutations of $\{1,\dots,d\}$ with descent set exactly $S$; here $m_{s+1}=d$, and
the determinant is MacMahon's \cite[\S IV]{MacMahon1915}, see \cite[Ex.~2.2.4]{Stanley2012}. Since $\beta_{d}(S)\ge1$ this never vanishes over $\bZ$, so a
counterexample to Conjecture~\ref{conj:CA} has at least three recycled roots; and in characteristic
$p$, every prime dividing $\beta_{d}(S)$ but no $z_{i}$ is a bad prime for $d$.
\end{theoremB}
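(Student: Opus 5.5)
The plan is to write out the surviving Casas--Alvero conditions explicitly in the coefficients and observe that they are triangular. Take $f=\sum_{j}a_{j}x^{d-j}$ in centred normal form with $a_{0}=1$ and support $S=\{m_{1}<\dots<m_{s}\}$. Then
\[
H_{d-m}(f)=\sum_{j\le m}a_{j}\tbinom{d-j}{d-m}x^{m-j},
\qquad\text{so}\qquad
H_{d-m_{k}}(f)(\theta)=\sum_{i\le k}\tbinom{d-m_{i}}{m_{k}-m_{i}}a_{m_{i}}\theta^{m_{k}-m_{i}},
\]
where I used $\binom{d-j}{d-m}=\binom{d-j}{m-j}$ and kept only $j\in\{0\}\cup S$. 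The coefficient of $a_{m_{k}}$ in this expression is $1$.

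By Theorem~A only the indices $d-m_{k}$, $k=1,\dots,s$, impose conditions. A scenario assigns to each $k$ a root $\theta_{k}$ of $f$ with $H_{d-m_{k}}(f)(\theta_{k})=0$. If $\theta_{k}=0$, the condition reads $a_{m_{k}}=0$, which contradicts $m_{k}\in S$. Hence every realising root is nonzero, and the root $0$ itself is recycled only trivially.

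Solving the conditions in the order $k=1,\dots,s$ expresses each $a_{m_{k}}$ as an integer polynomial in the $\theta$'s. This is the unit lower triangular structure, and it eliminates all coefficients. What remains are the $t$ equations $f(\theta)=0$, one for each distinct nonzero recycled root $\theta$. These are weighted homogeneous, so modulo $\mathbb G_{m}$ they amount to $t$ equations in $t-1$ unknowns.

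For $t=1$, with single root $\theta$, induction on $k$ gives $a_{m_{k}}=z_{k}\theta^{m_{k}}$ with the stated recursion. The last equation is
\[
f(\theta)=\theta^{d}\sum_{i}z_{i}=0 .
\]
The support requirement is exactly $z_{i}\ne0$ for all $i$. Conversely, if these hold in $K$, taking $\theta=1$ produces an $f$ over $K$ with the required properties.

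For the determinant, adjoin the row $m_{s+1}=d$, for which $\binom{d-m_{i}}{d-m_{i}}=1$. The recursion together with $z_{s+1}:=-\sum_{i\le s}z_{i}$ then says $Lz=e_{0}$, where $L$ is the unit lower triangular $(s+2)\times(s+2)$ matrix $L_{k,i}=\binom{d-m_{i}}{m_{k}-m_{i}}$. By Cramer's rule, $z_{s+1}=(-1)^{s+1}\det$ of the minor obtained by deleting row $0$ and column $s+1$. That minor is precisely $\bigl[\binom{d-m_{i}}{m_{j+1}-m_{i}}\bigr]_{i,j=0}^{s}$, which yields the sign $(-1)^{s}\sum_{i}z_{i}$; I will check this sign on $s=1$.

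To identify the determinant with $\beta_{d}(S)$, I will factor each binomial as
\[
\tbinom{d-m_{i}}{m_{j+1}-m_{i}}=\frac{(d-m_{i})!}{(d-m_{j+1})!\,(m_{j+1}-m_{i})!}.
\]
I pull $(d-m_{i})!$ out of row $i$ and $1/(d-m_{j+1})!$ out of column $j$. The product of these factors telescopes to $d!/0!=d!$, leaving $d!\det\bigl[1/(m_{j+1}-m_{i})!\bigr]$. This is MacMahon's formula for $\beta_{d}(S)$, which I will quote from \cite{Stanley2012}.

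The consequences then follow directly. Over $\bZ$, and hence in characteristic $0$, $\beta_{d}(S)\ge1$ because every subset of $\{1,\dots,d-1\}$ is the descent set of some permutation. So $t=1$ is impossible, and together with the root $0$ a counterexample needs at least three recycled roots.

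In characteristic $p$, if $p\mid\beta_{d}(S)$ and $p\nmid z_{i}$ for all $i$, the construction with $\theta=1$ gives a polynomial over $\overline{\bF}_{p}$. This polynomial satisfies every condition: the indices outside $S$ are vacuous by the first part of Theorem~A, which needs no hypothesis on $\chr K$ beyond $f(0)=0$. It is not $x^{d}$, since $S\ne\emptyset$ (the case $S=\emptyset$ gives $\beta=1$). Hence $p\in\bad(d)$.

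The main obstacle is bookkeeping rather than ideas. I must get the sign and index shift in the Cramer step exactly right, and make sure the triangular elimination is stated correctly when several conditions share the same root $\theta$.
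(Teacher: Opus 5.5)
Your proposal is correct and is essentially the paper's proof: the same unit lower triangular solve from the explicit formula for $H_{d-m_k}(f)(\theta)$, the same uniradical criterion via $f(\theta)=\theta^{d}\sum_i z_i$, the same appended all-ones row $m_{s+1}=d$ with Cramer's rule, and MacMahon's determinant quoted from Stanley. The one difference is cosmetic: you apply Cramer once to the augmented $(s+2)\times(s+2)$ unit triangular system to get $z_{s+1}=-\sum_i z_i$, whereas the paper solves for each $z_i$ and then expands along the appended row, which is the same computation. For the three-recycled-roots count, say explicitly why $0$ is itself a recycled root (in centred normal form $H_{d-1}(f)=dx$ vanishes only at $0$), or argue as the paper does by translating an arbitrary recycled root to the origin.
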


Theorem~B identifies the Casas--Alvero obstruction, when a single nonzero root realises every
condition, with a classical enumerative quantity, and every feature of it becomes a statement about
permutations. Its smallest instance $\beta_{d}(\{m\})=\binom{d}{m}-1$ is the criterion of
\cite[Cor.~8]{SchaubSpivakovsky2024}; the opposite extreme $\beta_{d}(\{1,\dots,d-1\})=1$ says that only the
decreasing permutation is totally descending; the symmetries $\beta_{d}(S)=\beta_{d}(S^{\mathrm c})
=\beta_{d}(d-S)$ are complementation and reversal of permutations; and the maximum of
$\beta_{d}(\cdot)$, attained exactly at the two alternating supports, is the Euler zigzag number
$A_{d}$ (Proposition~\ref{prop:zigzag}), by the theorem of Niven \cite{Niven1968} and de Bruijn
\cite{deBruijn1970}. Since
$A_{11}=2^{9}\cdot691$, the irregular prime $691$ is a bad prime for degree $11$; since
$A_{10}=19\cdot2659$, so are $19$ and $2659$ for degree $10$.

Theorem~B leaves untouched the scenarios in which more than one root is used. The smallest of
these---two roots and a support of two elements---is settled in \S\ref{sec:sparse}. Throughout,
for $1\le m_1<m_2\le d-1$ we write
\begin{equation}\label{eq:CEK-intro}
  C_1=\binom{d}{m_1},\quad C_2=\binom{d}{m_2},\quad E=\binom{d-m_1}{m_2-m_1},\quad \Lambda=\binom{m_2}{m_1},
\end{equation}
so that $C_1E=C_2\Lambda$, and we put $g=\gcd(m_1,m_2)$, $v=m_1/g$, $u=(m_2-m_1)/g$, so $\gcd(u,v)=1$.

\begin{theoremC}[Two-element supports; Theorem~\ref{thm:twoterm}]
Let $K$ be an algebraically closed field and let $1\le m_1<m_2\le d-1$. Put
\[
  L=\frac{C_2-1}{C_1(E-1)},\qquad R=\frac{C_1(C_2-E)}{(C_1-1)(C_2-1)} .
\]
Then $K$ carries a $\CA$-polynomial of degree $d$ with a root at the origin and support
$\{m_1,m_2\}$ if and only if either
\begin{enumerate}
\item[(i)] $C_1\notin\{0,1\}$ and $E=C_2=1$ in $K$; or
\item[(ii)] $C_1\notin\{0,1\}$, $C_2\neq1$, $E\neq1$, $E\neq C_2$ in $K$, and $L^{u}=R^{v}$ in $K$.
\end{enumerate}
\end{theoremC}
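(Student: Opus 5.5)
By Theorem~A, I would reduce the problem to two resultant conditions. Write $f=x^{d}+a\,x^{d-m_1}+b\,x^{d-m_2}$ with $a,b\neq0$. The only surviving Casas--Alvero conditions are those at $i=d-m_1$ and $i=d-m_2$. Computing Hasse derivatives termwise gives
\[
H_{d-m_1}(f)=C_1x^{m_1}+a,\qquad H_{d-m_2}(f)=C_2x^{m_2}+aE\,x^{m_2-m_1}+b ,
\]
since $x^{d-m_2}$ is killed by $H_{d-m_1}$. I then translate each condition into the existence of a common root. The origin never works, because it would force $a=0$ or $b=0$. So both conditions are realised by nonzero roots $\theta$ (for $i=d-m_1$) and $\eta$ (for $i=d-m_2$). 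Using the weighted $\mathbb G_m$-action, I normalise $\theta=1$.

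Next I would solve the first condition, which is triangular in the spirit of Theorem~B. If $C_1=0$, then $H_{d-m_1}(f)=a$ is a nonzero constant, so there is no solution. Otherwise $a=-C_1$, and $f(1)=0$ gives $b=C_1-1$. Hence $b\neq0$ forces $C_1\neq1$. This accounts for the common hypothesis $C_1\notin\{0,1\}$, and conversely any $\eta$ satisfying the second condition with these values of $a,b$ yields a CA-polynomial.

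For the second condition, I subtract $H_{d-m_2}(f)(\eta)=0$ from $\eta^{-(d-m_2)}f(\eta)=0$. This gives $(C_2-1)\eta^{m_2}=-a(E-1)\eta^{m_2-m_1}$, that is,
\[
(C_2-1)\eta^{m_1}=C_1(E-1).
\]
The case split follows from this identity.

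If $C_2=1$, then $E=1$ is forced. In that case the two equations for $\eta$ coincide, so $\eta=\theta=1$ works, and we are in case (i).

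If $C_2\neq1$, then $E=1$ would force $\eta=0$, which is excluded. So $E\neq1$, and $X:=\eta^{m_1}=1/L$ is determined. Substituting into $f(\eta)=0$ determines $Y:=\eta^{m_2-m_1}$ via $Y(X-C_1)=-(C_1-1)$. One could equally use the $H_{d-m_2}$-equation, which must give the same value. I expect the value to simplify, using $C_1E=C_2\Lambda$, to $Y=1/R$. The hypothesis $E\neq C_2$ should then be exactly the condition that the relevant denominator is nonzero, so that $Y$ is finite and nonzero. If that denominator vanishes, there is no admissible $\eta$.

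Finally, with $m_1=gv$ and $m_2-m_1=gu$ where $\gcd(u,v)=1$, there exists $\eta$ with $\eta^{m_1}=X$ and $\eta^{m_2-m_1}=Y$ in $K=\overline K$ if and only if there exists $\zeta=\eta^{g}$ with $\zeta^{v}=X$ and $\zeta^{u}=Y$. For nonzero $X,Y$ and coprime $u,v$, this holds if and only if $X^{u}=Y^{v}$. Bézout gives $\zeta=X^{\alpha}Y^{\beta}$ with $\alpha v+\beta u=1$, and the $g$-th root can always be extracted. This identity is $L^{u}=R^{v}$, which is case (ii).

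The main obstacle is the algebraic simplification of $Y$ to $1/R$ together with the bookkeeping of the degenerate cases. One must check that the two available expressions for $Y$ agree modulo $C_1E=C_2\Lambda$, and that the vanishing of the denominator corresponds exactly to $E=C_2$ in every characteristic (including when some $C_j$ vanish in $K$). I would first carry this out symbolically by clearing denominators and then examine each vanishing factor separately.
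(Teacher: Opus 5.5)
Your proof is correct, and it is organised differently from the paper's.

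The paper keeps $a,b$ free. It converts the condition at $m_1$ into the monomial equations \eqref{eq:alpha1}--\eqref{eq:alpha2} for a root $\alpha$, and the condition at $m_2$ into \eqref{eq:gamma2}--\eqref{eq:gamma1} for a root $\gamma$. It then applies Lemma~\ref{lem:powers} twice to obtain \eqref{eq:condA} and \eqref{eq:condB}, each of the shape $c\,a^{u}=c'(b/a)^{v}$. Only at the end does it eliminate $(a,b)$, by dividing one condition by the other.

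You normalise first. Putting the root that realises the condition at $m_1$ at $1$ pins $(a,b)=(-C_1,C_1-1)$. This is close in spirit to Theorem~\ref{thm:triangular} with $t=2$, except that you solve for $b$ from $f(1)=0$ rather than from the condition at $m_2$. A single unknown $\eta$ then remains, and Lemma~\ref{lem:powers} is used only once.

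The step you flag as the main obstacle is immediate and does not need $C_1E=C_2\Lambda$. One has $X-C_1=C_1(E-C_2)/(C_2-1)$. So for $E\neq C_2$ you get $Y=(C_1-1)(C_2-1)/\bigl(C_1(C_2-E)\bigr)=1/R$. For $E=C_2$ the equation becomes $0=1-C_1$, which is impossible since $C_1\neq1$. The $H_{d-m_2}$-equation gives the same result, because $C_2X-C_1E$ equals the same quantity $C_1(E-C_2)/(C_2-1)$.

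Each route has its advantages.
\begin{itemize}
\item Your route is more economical and yields an explicit normal form. Every polynomial in question lies in the $\mathbb G_m$-orbit of $x^{d}-C_1x^{d-m_1}+(C_1-1)x^{d-m_2}$. So the set of such $f$ with support $\{m_1,m_2\}$, when nonempty, is a single orbit. It also means the witnesses behind Theorem~\ref{thm:badprimes} can be taken with coefficients in $\bF_p$. For instance, $x^5-5x^4+4x$ rescales to the paper's witness $x^5+x^4+61x$ over $\bF_{131}$.
\item The paper's route treats the two conditions symmetrically. It exhibits each one separately as a binomial curve in the torus, with $L^{u}=R^{v}$ as their compatibility condition. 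This makes the ``overdetermined by one'' count of Corollary~\ref{cor:strat} explicit, and it needs no choice of which root to normalise.
\end{itemize}
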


\begin{theoremD}[Theorem~\ref{thm:main-char0}]
For all $d\ge4$ and all $2\le m_1<m_2\le d-1$ one has $0<L^{u}<R^{v}<1$ in $\bQ$. Consequently no
$\CA$-polynomial over a field of characteristic zero has two-element support, and, together with
Theorem~B (case $\#S=1$):

\emph{The centred normal form of a counterexample to the Casas-Alvero conjecture has at least four
nonzero terms.}
\end{theoremD}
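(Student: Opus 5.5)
The plan is to reduce everything to elementary inequalities among the four binomial coefficients in \eqref{eq:CEK-intro}, using the identity $C_1E=C_2\Lambda$ throughout. The consequence will then follow from Theorem~C. In characteristic zero, $1\le m_2\le d-1$ gives $C_2=\binom{d}{m_2}\ge d>1$, so alternative (i) of Theorem~C is impossible. Likewise $C_1\ge d\ge 4$ gives $C_1\notin\{0,1\}$. Hence only alternative (ii) remains, and it is excluded by the strict inequality $L^u<R^v$. Combined with Theorem~B (a one-element support $\{m\}$ would need $\beta_d(\{m\})=\binom dm-1=0$, which fails over $\bZ$), this shows the centred normal form has $\tau(f)\ge3$, i.e.\ at least four terms.

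For the inequalities, I first record the sign facts.
\begin{itemize}
\item Since $0<m_2-m_1<d-m_1$, we have $E\ge d-m_1\ge2$, so $E-1>0$.
\item Since $C_2>1$, it follows that $L>0$.
\item We have $C_2-E=E\,(C_1/\Lambda-1)>0$, because $C_1/\Lambda=\binom{d}{m_1}/\binom{m_2}{m_1}>1$ for $m_2<d$. Hence $R>0$.
\end{itemize}
Next I expand the inequality $R<1$:
\[
C_1C_2-C_1E<C_1C_2-C_1-C_2+1 .
\]
This rearranges to $C_2-1<C_1(E-1)$, which is exactly $L<1$. So the inequality $R<1$ is equivalent to $L<1$, and one check gives both bounds at once. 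To verify $L<1$, note that $C_1(E-1)-(C_2-1)=C_2(\Lambda-1)-C_1+1$. This is positive in the relevant range, by comparing $\binom{d}{m_2}(\binom{m_2}{m_1}-1)$ with $\binom d{m_1}$ using $m_1\ge2$. Any edge cases with $\Lambda$ small would be checked directly.

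The main obstacle is the middle inequality $L^u<R^v$, because the exponents $u=(m_2-m_1)/g$ and $v=m_1/g$ can be large. My first attempt would use the approximations $L\approx C_2/(C_1E)=1/\Lambda$ and $R\approx1-E/C_2=1-\Lambda/C_1$. Concretely, I would prove the sharper bounds
\[
L\le \frac{1}{\Lambda}\cdot\frac{E}{E-1},\qquad R\ge 1-\frac{\Lambda}{C_1-1}.
\]
Then I would use $L^u\le L^{1}$ when $u\ge1$, and Bernoulli's inequality $R^v\ge1-v\Lambda/(C_1-1)$. This reduces the problem to a single polynomial-size inequality in binomials, essentially $\frac{E}{(E-1)\Lambda}+\frac{m_1\Lambda}{C_1-1}<1$.

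That reduced inequality should hold easily when $m_2$ is not close to $d$. When $m_2$ is near $d$, the ratio $\Lambda/C_1$ is not small and Bernoulli is too lossy. In that regime I would instead exploit the large exponent $u$ on the left and compare logarithms, $u\log(1/L)>v\log(1/R)$. I would bound $\log(1/R)$ by $\log$ of the exact ratio and $\log(1/L)$ from below by $\log\Lambda-\log\frac{E}{E-1}$. If necessary, I would use the reversal symmetry $m\mapsto d-m$ of the setup to move into the favourable regime. Finally, I would check the finitely many small cases with $d\le 6$ by hand.
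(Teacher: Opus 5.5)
Your outer steps are sound. The sign facts are correct, and so is the passage from $L^u\ne R^v$ to the two consequences via Theorems~C and~B. Your observation that $R<1$ and $L<1$ are the same inequality $C_1(E-1)>C_2-1$ is also correct; the paper checks them separately. That step can even be made case-free: Theorem~\ref{thm:pascal} with $s=2$ gives $C_1(E-1)-(C_2-1)=1-C_1-C_2+C_1E=\beta_d(\{m_1,m_2\})\ge1$.

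The gap is the middle inequality $L^u<R^v$, which is the real content of the theorem, and your plan for it does not close. Write $\mu=E/C_2=\Lambda/C_1$. The Bernoulli reduction only covers the regime where $m_1\mu$ is small, which is essentially the paper's Case~2. In the complementary regime your heuristic points the wrong way. The hardest family is $(d,m_1,m_2)=(d,d-2,d-1)$, where $L=2/d$, $R=d/(d+1)$, $u=1$ and $v=d-2$. So the large exponent sits on $R$, not on $L$. Take your own bounds $L\le\frac{E}{(E-1)\Lambda}=\frac{2}{d-1}$ and $R\ge1-\frac{\Lambda}{C_1-1}=1-\frac{2(d-1)}{(d-2)(d+1)}$. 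The resulting sufficient condition fails for every $4\le d\le 14$, even if you raise to the exact power $v$ instead of using Bernoulli; at $d=14$ the right side is $0.15381\ldots<2/13$. So confining the hand check to $d\le6$ is unjustified. Also, ``bounding $\log(1/R)$ by the log of the exact ratio'' just restates the inequality; it supplies no uniform estimate. The fallback reversal symmetry does not exist. For $d=4$, $m\mapsto d-m$ sends $\{2,3\}$ to $\{1,2\}$, which lies outside the range $m_1\ge2$. The values also change, from $(L^u,R^v)=(\tfrac12,\tfrac{16}{25})$ to $(\tfrac58,\tfrac45)$.

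The missing idea is to control $1/L$ and $R^{m_1}$ together through the single quantity $\Theta=C_1\mu\,\frac{E-1}{E}\,(1-\mu)^{m_1}$. Lemma~\ref{lem:chain} shows that $\Theta\ge1$ gives $L^u\le L<R^{m_1}\le R^v$. Since $t\mapsto t(1-t)^{m}$ is unimodal, its minimum over the admissible range of $\mu$ is attained at an endpoint:
\begin{itemize}
\item The upper endpoint $\mu\le\frac{d-m}{d}$ comes from $\Lambda\le\binom{d-1}{m}$, and it yields exactly $\Xi(d,m)=\frac{d-m}{2d}\binom dm(\frac md)^m\ge1$.
\item The lower endpoint $\mu=\frac1{2m}$ is where the Bernoulli regime takes over.
\item The case $m=2$ is handled on the interval $[\frac{6}{d(d-1)},\frac{d-2}{d}]$.
\end{itemize}
These estimates only work for $d\ge16$. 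For $d\in\{14,15\}$, $\Theta\ge1$ must be checked exactly. For $d\le13$ it genuinely fails at fifteen triples, including every $(d,d-2,d-1)$, and there $L^u<R^v$ has to be verified directly.
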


Theorem~D reduces to an inequality between binomial coefficients, proved in \S\ref{sec:char0} for
$d\ge14$ and checked directly at the fifteen exceptional triples with $d\le13$.

Theorem~C likewise turns the search for bad primes coming from two-element supports into a matter of
integer factorisation.

\begin{theoremE}[Theorem~\ref{thm:badprimes}]
For $2\le m_1<m_2\le d-1$ define the integer
\[
  \mathcal N(d,m_1,m_2)\;=\;(C_2-1)^{u}\bigl[(C_1-1)(C_2-1)\bigr]^{v}
   \;-\;\bigl[C_1(E-1)\bigr]^{u}\bigl[C_1(C_2-E)\bigr]^{v}.
\]
Then $\mathcal N(d,m_1,m_2)\neq0$, and every prime $p$ which divides
$\mathcal N(d,m_1,m_2)$ and does not divide $C_1(C_1-1)(C_2-1)(E-1)(C_2-E)$ is a bad prime for $d$.
\end{theoremE}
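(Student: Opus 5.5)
The plan is to deduce Theorem~E directly from Theorems~C and~D by clearing denominators in the equation $L^{u}=R^{v}$. Write $L=\frac{C_2-1}{C_1(E-1)}$ and $R=\frac{C_1(C_2-E)}{(C_1-1)(C_2-1)}$. Over $\bQ$, the equation $L^{u}=R^{v}$ is equivalent, after multiplying through by the nonzero product $[C_1(E-1)]^{u}[(C_1-1)(C_2-1)]^{v}$, to
\[
(C_2-1)^{u}\bigl[(C_1-1)(C_2-1)\bigr]^{v}=\bigl[C_1(E-1)\bigr]^{u}\bigl[C_1(C_2-E)\bigr]^{v},
\]
which is precisely $\mathcal N(d,m_1,m_2)=0$. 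The denominators really are nonzero in $\bZ$: for $2\le m_1<m_2\le d-1$ and $d\ge4$ we have $C_1,C_2\ge d>1$, and $E=\binom{d-m_1}{m_2-m_1}>1$ because $1\le m_2-m_1<d-m_1$. Theorem~D gives $L^{u}<R^{v}$ strictly in $\bQ$, hence $\mathcal N\ne0$. If $m_1\ge2$ forces $d\ge4$ anyway, no small cases remain; I would nonetheless check the range conditions of Theorem~D against the hypotheses here.

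For the bad-prime statement, fix a prime $p$ dividing $\mathcal N$ but not $C_1(C_1-1)(C_2-1)(E-1)(C_2-E)$, and let $K=\overline{\bF}_p$. I will verify condition~(ii) of Theorem~C in $K$:
\begin{itemize}
\item $C_1\ne0$ and $C_1\ne1$ hold because $p\nmid C_1(C_1-1)$;
\item $C_2\ne1$, $E\ne1$ and $E\ne C_2$ hold because $p\nmid (C_2-1)(E-1)(C_2-E)$.
\end{itemize}
Consequently $L$ and $R$ are well-defined elements of $\bF_p\subset K$, since their denominators $C_1(E-1)$ and $(C_1-1)(C_2-1)$ are units modulo $p$. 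Reducing the integer identity above modulo $p$ and dividing by these units gives $L^{u}=R^{v}$ in $K$. Theorem~C then produces a $\CA$-polynomial of degree $d$ over $K$ with support $\{m_1,m_2\}$. Hence $\CA_{d,p}$ fails, and $p\in\bad(d)$.

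There is one subtlety. Theorem~C's $\CA$-polynomial is centred with a root at the origin, and the centred normal form was introduced under $\chr K\nmid d$. Theorem~C as stated does not require this, so nothing further is needed. Even if it did, $p\mid d$ would force $p\mid C_1=\binom{d}{m_1}$ or be handled separately, and $p\nmid C_1$ is part of the hypothesis.

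The only nontrivial input is the strict inequality $L^{u}\neq R^{v}$ over $\bQ$, which is Theorem~D. The rest is bookkeeping: confirming that each excluded factor corresponds exactly to one of the nondegeneracy conditions in Theorem~C(ii), and that reduction modulo $p$ commutes with the rational expressions once the denominators are units.
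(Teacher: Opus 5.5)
Your proposal is correct and follows the paper's proof. $\mathcal N\neq0$ comes from Theorem~D after clearing the nonzero denominators $[C_1(E-1)]^{u}[(C_1-1)(C_2-1)]^{v}$, and the bad-prime assertion is Theorem~C(ii) over $\overline{\bF}_p$, where $p\nmid C_1(C_1-1)(C_2-1)(E-1)(C_2-E)$ supplies the nondegeneracy conditions and makes $p\mid\mathcal N$ equivalent to $L^{u}=R^{v}$ in $\bF_p$.

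Your aside about $p\mid d$ is unneeded, and $p\mid d$ does not in general force $p\mid\binom{d}{m_1}$ (e.g.\ $\binom{6}{2}=15$). Nothing rests on it, since Theorem~C places no hypothesis on $\chr K$.
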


Deciding whether a given prime belongs to the resulting set costs $O(d^{2})$ modular
exponentiations, with no Gr\"obner basis computation. For $d=4$ the two criteria together already
produce all of $\bad(4)=\{3,5,7\}$; for $6\le d\le7$ they produce large explicit subsets of the
sets computed in \cite{CLO2014} by Gr\"obner basis computation; and Tables~\ref{tab:unibad}
and \ref{tab:sparsebad} record what they give for $d\le12$ and $d\le14$ respectively.

Neither reduction proves the conjecture. The second disposes of every support at once, but only
for the scenario in which one root realises all the conditions; for $t\ge2$ recycled roots it
leaves $t$ equations in $t-1$ unknowns, and the elimination is carried out here only for $t=2$ with
a support of two elements. What the reductions supply is a framework in which the obstruction is an
explicit integer, and in which the characteristic-$p$ failures are visible.

The two reductions are carried out in \S\ref{sec:strat} and \S\ref{sec:uni}, after notation is
fixed in \S\ref{sec:setup}; \S\ref{sec:sparse} treats the smallest scenario the second reduction
does not reach, \S\ref{sec:badprimes} draws the arithmetic consequences, and
\S\ref{sec:computations} documents the computations and \S\ref{sec:concluding} the questions left
open. The transfer principle (T) is quoted from \cite{BLSW2007} and is the only result used below
that is not proved here.

\section{Notation and the centred normal form}\label{sec:setup}

Two properties of the Hasse derivative are used throughout: its value at the origin is a single
coefficient of $f$, and, for monic $f$, its resultant against $f$ is the product of its values at
the roots. Both are recorded below, together with the affine normalisation---monic, with a
distinguished root translated to the origin---that will be in force from \S\ref{sec:strat} onwards.

\subsection{Hasse derivatives}
For $f=\sum_{j=0}^{d}a_{j}x^{d-j}\in K[x]$ (so $a_j$ is the coefficient of $x^{d-j}$) the $i$-th
Hasse derivative is
\begin{equation}\label{eq:hasse}
  H_{i}(f)\;=\;\sum_{j=0}^{d-i}\binom{d-j}{i}a_{j}\,x^{\,d-i-j}.
\end{equation}
Equivalently $f(x+\theta)=\sum_{i\ge0}H_{i}(f)(\theta)\,x^{i}$, i.e.\ $H_i(f)(\theta)$ is the
coefficient of $x^{i}$ in the Taylor expansion of $f$ at $\theta$. In characteristic zero
$H_{i}(f)=f^{(i)}/i!$, so all statements below specialise to the classical derivatives. Two
elementary facts are used throughout:
\begin{equation}\label{eq:const-term}
  H_{i}(f)(0)\;=\;a_{d-i},
\end{equation}
which is the term $j=d-i$ of \eqref{eq:hasse}, and, for $f$ monic,
\begin{equation}\label{eq:resprod}
  \Res\bigl(f,H_{i}(f)\bigr)\;=\;\prod_{f(\alpha)=0}H_{i}(f)(\alpha),
\end{equation}
the product being over the roots of $f$ in $\overline K$ with multiplicity. Because $f$ is monic,
\eqref{eq:resprod} is an identity of polynomials in $\bZ[a_1,\dots,a_d]$ which is compatible with
arbitrary specialisation, in particular with reduction modulo a prime; this is why no degree drop of
$H_i(f)$ causes trouble.

\begin{definition}\label{def:CA}
A monic $f\in K[x]$ of degree $d\ge1$ is a \emph{Casas-Alvero polynomial}, or
\emph{$\CA$-polynomial}, if $\gcd\bigl(f,H_{i}(f)\bigr)\neq1$ in $\overline K[x]$ for every
$i=1,\dots,d-1$ and $f$ is not of the form $(x-\alpha)^{d}$.
\end{definition}

By \eqref{eq:resprod}, for $\overline K=K$ the condition $\gcd(f,H_i(f))\neq1$ is equivalent to
$\Res(f,H_i(f))=0$.

\subsection{Normalisation}
Let $\alpha_{1},\dots,\alpha_{d}$ be the roots of $f$ in $\overline K$. Rescaling
$f(x)\mapsto\lambda^{-d}f(\lambda x)$ and translating $f(x)\mapsto f(x+\theta)$ preserve
Definition~\ref{def:CA}, the former acting on coefficients by $a_{m}\mapsto\lambda^{-m}a_{m}$.

\begin{lemma}\label{lem:cnf}
Let $\chr K\nmid d$ and let $f$ be a monic $\CA$-polynomial of degree $d$ over $K=\overline K$. Then
$f$ is a translate of a monic polynomial in \emph{centred normal form}
\[
  f\;=\;x^{d}+a_{2}x^{d-2}+a_{3}x^{d-3}+\cdots+a_{d-1}x,\qquad\text{i.e. } a_{1}=a_{d}=0 .
\]
The translation is unique.
\end{lemma}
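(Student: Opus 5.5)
The plan is to use the Casas--Alvero condition at the top index $i=d-1$, which pins down a distinguished root; translating that root to the origin then kills $a_1$ and $a_d$ at once.

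\emph{Step 1: the condition at $i=d-1$.} Since $f$ is a $\CA$-polynomial, $\gcd(f,H_{d-1}(f))\neq1$. By \eqref{eq:hasse},
\[
H_{d-1}(f)=\tbinom{d}{d-1}x+\tbinom{d-1}{d-1}a_1=dx+a_1 .
\]
Because $\chr K\nmid d$, this is a polynomial of degree exactly one. Its unique root is $\theta=-a_1/d$. A non-constant common factor with $f$ must therefore be $x-\theta$, so $f(\theta)=0$. This step uses $d\ge2$, so that the index $i=d-1\ge1$ actually occurs. For $d=1$ the only monic polynomial is $x-\alpha$, which is excluded by the definition, so the statement is vacuous there.

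\emph{Step 2: the translate.} Set $g(x)=f(x+\theta)$. By the Taylor-expansion description of Hasse derivatives, $g(x)=\sum_i H_i(f)(\theta)x^i$. The constant term of $g$ is $f(\theta)=0$, so $a_d(g)=0$. The coefficient of $x^{d-1}$ in $g$ is $H_{d-1}(f)(\theta)=d\theta+a_1=0$, so $a_1(g)=0$. The polynomial $g$ is still monic of degree $d$. Since translation preserves Definition~\ref{def:CA}, $g$ is a $\CA$-polynomial in centred normal form, and $f(x)=g(x-\theta)$ is the required translate.

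\emph{Step 3: uniqueness.} Suppose $f(x)=g(x-\theta')$ with $g$ in centred normal form. The coefficient of $x^{d-1}$ in $f(x+\theta')=g(x)$ is $d\theta'+a_1$, and this must vanish. Since $d$ is invertible in $K$, this forces $\theta'=-a_1/d=\theta$. The condition $a_1=0$ alone therefore determines the translation, and $a_d=0$ is then automatic by Step 1.

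The argument is elementary throughout. The only point needing care is that $\chr K\nmid d$ keeps $H_{d-1}(f)$ from degenerating to a constant, which would make the condition at $i=d-1$ vacuous or impossible.
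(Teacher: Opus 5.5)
Your proof is correct and follows the paper's argument: the condition at $i=d-1$, with $H_{d-1}(f)=dx+a_1$ of exact degree one because $d\neq0$ in $K$, forces $f(-a_1/d)=0$, and translating that root to the origin kills both $a_1$ and $a_d$. Your uniqueness step shows that any translate with vanishing $x^{d-1}$-coefficient must be by $-a_1/d$; this simply spells out the paper's remark that $\theta_0$ is determined by $f$.
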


\begin{proof}
$H_{d-1}(f)=\binom{d}{d-1}x+\binom{d-1}{d-1}a_{1}=dx+a_{1}$ is a polynomial of degree exactly $1$
because $d\neq0$ in $K$; its unique root is $\theta_{0}=-a_{1}/d$, the arithmetic mean of the roots
of $f$. The Casas--Alvero condition at $i=d-1$ says precisely that $f(\theta_{0})=0$. Replacing $f$
by $f(x+\theta_{0})$ we obtain $a_{d}=f(\theta_{0})=0$ and
$a_{1}\mapsto a_{1}+d\theta_{0}=0$. Uniqueness holds because $\theta_0$ is determined by $f$.
\end{proof}

\begin{definition}
For monic $f=x^{d}+\sum_{m\ge1}a_{m}x^{d-m}$ with $f(0)=0$ we set
\[
  \supp(f)=\{m\in\{1,\dots,d-1\}: a_{m}\neq0\},\qquad \tau(f)=\#\supp(f).
\]
Thus $f$ has $\tau(f)+1$ nonzero terms, and $\tau(f)=0$ if and only if $f=x^{d}$. If $f$ is in
centred normal form then $\supp(f)\subseteq\{2,\dots,d-1\}$.
\end{definition}

\begin{lemma}[Kummer \cite{Kummer1852}]\label{lem:kummer}
For a prime $p$ and $0\le k\le n$, the $p$-adic valuation of $\binom{n}{k}$ equals the number of
carries when $k$ and $n-k$ are added in base $p$. In particular, if $p^{e}\mid n$ and
$v_{p}(k)<e$, then $p\mid\binom{n}{k}$.
\end{lemma}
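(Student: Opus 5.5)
The statement is Kummer's theorem together with the consequence ``if $p^{e}\mid n$ and $v_{p}(k)<e$ then $p\mid\binom{n}{k}$''. I would prove the valuation formula via Legendre's formula and then deduce the corollary by counting a single carry.

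The first step is Legendre's formula $v_{p}(N!)=\sum_{j\ge1}\lfloor N/p^{j}\rfloor$. This holds because each integer $\le N$ divisible by exactly $p^{r}$ is counted once for each $j\le r$. It gives
\[
  v_{p}\binom{n}{k}=\sum_{j\ge1}\Bigl(\Bigl\lfloor \tfrac{n}{p^{j}}\Bigr\rfloor-\Bigl\lfloor \tfrac{k}{p^{j}}\Bigr\rfloor-\Bigl\lfloor \tfrac{n-k}{p^{j}}\Bigr\rfloor\Bigr).
\]
Write $\{y\}$ for the fractional part of $y$. For real $x,y\ge0$, the quantity $\lfloor x+y\rfloor-\lfloor x\rfloor-\lfloor y\rfloor$ equals $1$ if $\{x\}+\{y\}\ge1$ and $0$ otherwise.

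The second step applies this with $x=k/p^{j}$ and $y=(n-k)/p^{j}$. The $j$-th summand is $1$ exactly when $(k\bmod p^{j})+((n-k)\bmod p^{j})\ge p^{j}$. This is precisely the condition that a carry occurs out of digit position $j-1$ when $k$ and $n-k$ are added in base $p$. One checks this by induction on $j$: the low $j$ digits of the sum overflow iff a carry passes into position $j$. Summing over $j$ gives the number of carries.

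For the ``in particular'', let $r=v_{p}(k)<e$. The lowest $r$ digits of $k$ vanish, and digit $r$ is nonzero. Since $n\equiv0\pmod{p^{e}}$, we have $n-k\equiv -k\pmod{p^{r+1}}$. Then $(k\bmod p^{r+1})+((n-k)\bmod p^{r+1})=p^{r+1}$, because both residues are nonzero and they sum to $0$ modulo $p^{r+1}$. So the summand for $j=r+1$ equals $1$, and hence $v_{p}\binom{n}{k}\ge1$.

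The only point needing care is the identification of the summand with a carry, which is the short induction above. Everything else is routine.
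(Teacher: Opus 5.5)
Your proof is correct. The paper does not prove this lemma; it quotes it from Kummer and uses it as a black box. So your Legendre-formula argument is not a rival route but the standard proof the paper omits.

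The key identification is sound. The carry into position $j$ when adding $k$ and $n-k$ in base $p$ is $\bigl\lfloor\bigl((k\bmod p^{j})+((n-k)\bmod p^{j})\bigr)/p^{j}\bigr\rfloor\in\{0,1\}$. This is exactly the $j$-th Legendre summand, and your induction on the low digits establishes it.

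The only part of the lemma the paper actually uses is the ``in particular'' clause, in Propositions~\ref{prop:BLSW1} and~\ref{prop:BLSW2}. Your derivation of that clause reads off the summand at $j=r+1$ directly, so it needs Legendre's formula alone, not the carry interpretation. This makes fully elementary the paper's remark that those two propositions rest only on vacuity and Kummer's theorem.

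One small point you use implicitly: the hypothesis $v_{p}(k)<e$ excludes $k=0$, which is what makes $r$ finite. After that, $n-k\not\equiv 0 \pmod{p^{r+1}}$ follows from your congruence $n-k\equiv -k$.
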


\section{The support stratification}\label{sec:strat}

A Casas--Alvero condition constrains $f$ through the values of a Hasse derivative at the roots of
$f$. If the coefficient governing that derivative at the origin vanishes, and the origin is a root,
the constraint is empty. Applied systematically, this observation stratifies the
Casas--Alvero locus by the support of the centred normal form and reduces the number of effective
conditions from $d-1$ to the size of that support. It also yields short proofs of the two
characteristic-$p$ propositions of \cite{BLSW2007} on which all known cases of the conjecture rest.

\subsection{Vacuity of the conditions off the support}

\begin{theorem}[Vacuity]\label{thm:vacuity}
Let $K$ be any field and let $f=x^{d}+a_{1}x^{d-1}+\cdots+a_{d-1}x\in K[x]$ be monic with
$f(0)=0$. If $a_{d-i}=0$ for some $i\in\{1,\dots,d-1\}$, then $x$ divides
$\gcd\bigl(f,H_{i}(f)\bigr)$; in particular the Casas--Alvero condition at index $i$ is satisfied.
\end{theorem}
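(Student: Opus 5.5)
The plan is to read off both divisibilities by $x$ at the origin, using the constant-term identity \eqref{eq:const-term}. Since $f(0)=0$ by hypothesis, $x$ divides $f$ in $K[x]$. For the Hasse derivative, \eqref{eq:const-term} gives $H_i(f)(0)=a_{d-i}$. This is just the $j=d-i$ term of \eqref{eq:hasse}, because it is the only term of degree $d-i-j=0$. The hypothesis $a_{d-i}=0$ then says the constant term of $H_i(f)$ vanishes, so $x\mid H_i(f)$ as well.

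Hence $x$ is a common divisor of $f$ and $H_i(f)$ in $K[x]$, and therefore $x\mid\gcd(f,H_i(f))$. Since $x$ is non-constant, the gcd is not $1$, in $K[x]$ or in $\overline K[x]$. This is exactly the Casas--Alvero condition at index $i$ in Definition~\ref{def:CA}. The root at the origin realises it.

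One degenerate case needs a remark. If $H_i(f)$ is the zero polynomial, which can happen in characteristic $p$ when all the binomials $\binom{d-j}{i}$ vanish, then $\gcd(f,0)=f$. This is still divisible by $x$, so the conclusion holds trivially. I expect no real obstacle: the only point needing care is this zero-derivative case, and the observation that no assumption on $\chr K$ or on $a_1$ is needed, since \eqref{eq:hasse} holds over any field.
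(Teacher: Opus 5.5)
Your proposal is correct and is essentially the paper's proof: you use \eqref{eq:const-term} to get $H_i(f)(0)=a_{d-i}=0$ and combine it with $f(0)=0$ to exhibit $x$ as a common factor. Like the paper, you also dispose of the degenerate case where $H_i(f)$ is the zero polynomial.
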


\begin{proof}
By \eqref{eq:const-term}, $H_{i}(f)(0)=a_{d-i}=0$, and $f(0)=0$ by hypothesis. Hence $0$ is a common
root. (If $H_{i}(f)$ is the zero polynomial the statement is trivially true.)
\end{proof}

A direct consequence is that the number of non-vacuous Casas--Alvero conditions is exactly
$\tau(f)$, rather than $d-1$, and that the surviving conditions are indexed by the support itself.

\begin{corollary}[Support reduction]\label{cor:suppred}
Let $K=\overline K$ and let $f$ be monic of degree $d$ with $f(0)=0$. Then
$\gcd\bigl(f,H_{i}(f)\bigr)\neq1$ for all $i=1,\dots,d-1$ if and only if
\[
  \mathcal R_{m}(f)\;:=\;\Res\bigl(f,\,H_{d-m}(f)\bigr)\;=\;0\qquad\text{for every }m\in\supp(f).
\]
Moreover, for $m\in\supp(f)$ every common root of $f$ and $H_{d-m}(f)$ is nonzero.
\end{corollary}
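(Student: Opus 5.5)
The plan is to split the indices $i\in\{1,\dots,d-1\}$ according to whether $m:=d-i$ lies in $\supp(f)$, and to translate the gcd condition into a resultant condition using \eqref{eq:resprod}. Since $K=\overline K$ and $f$ is monic, \eqref{eq:resprod} shows that $\gcd\bigl(f,H_i(f)\bigr)\neq1$ holds exactly when $\Res\bigl(f,H_i(f)\bigr)=0$. This equivalence is valid even when $H_i(f)$ drops degree or vanishes identically, because the product formula holds as an identity.

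The steps are as follows.

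\emph{Indices off the support.} If $d-i\notin\supp(f)$, then $a_{d-i}=0$, and Theorem~\ref{thm:vacuity} gives $x\mid\gcd(f,H_i(f))$. So this condition holds automatically, and it imposes nothing in either direction of the equivalence.

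\emph{Indices on the support.} The indices with $d-i\in\supp(f)$ are exactly $i=d-m$ with $m\in\supp(f)$. For these, the condition $\gcd\neq1$ is equivalent to $\mathcal R_m(f)=0$ by the remark above.

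Combining the two cases, the full set of conditions for $i=1,\dots,d-1$ is equivalent to $\mathcal R_m(f)=0$ for all $m\in\supp(f)$. This gives both directions of the equivalence.

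For the final assertion, take $m\in\supp(f)$. By \eqref{eq:const-term}, $H_{d-m}(f)(0)=a_m\neq0$, so $0$ is not a root of $H_{d-m}(f)$. Hence every common root of $f$ and $H_{d-m}(f)$ is nonzero.

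There is no substantive obstacle. The only care needed is the degenerate case: over positive characteristic $H_{d-m}(f)$ might have lower degree, or be zero. For $m\in\supp(f)$, however, its constant term $a_m$ is nonzero, so it is a nonzero polynomial. The identity \eqref{eq:resprod} then applies verbatim.
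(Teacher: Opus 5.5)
Your proposal is correct and follows essentially the same route as the paper: Theorem~\ref{thm:vacuity} disposes of every index with $a_{d-i}=0$, the remaining indices $i=d-m$ with $m\in\supp(f)$ become resultant conditions via \eqref{eq:resprod}, and $H_{d-m}(f)(0)=a_m\neq0$ gives the final assertion. Your extra care about $H_{d-m}(f)$ dropping degree or vanishing is accurate, but it is already covered by \eqref{eq:resprod} for monic $f$.
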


\begin{proof}
Since $a_{d}=0$, Theorem~\ref{thm:vacuity} disposes of every index $i$ with $a_{d-i}=0$;
writing $m=d-i$, the remaining indices are $i=d-m$ with $m\in\supp(f)$. For such $m$ we have
$H_{d-m}(f)(0)=a_{m}\neq0$, so $0$ is not a common root.
\end{proof}

Explicitly, for $m\in\supp(f)$,
\begin{equation}\label{eq:Gm}
  G_{m}\;:=\;H_{d-m}(f)\;=\;\sum_{\substack{j\in\{0\}\cup\supp(f)\\ j\le m}}\binom{d-j}{m-j}\,a_{j}\,x^{\,m-j},
  \qquad a_{0}=1,
\end{equation}
a polynomial of degree at most $m$ with nonzero constant term $a_{m}$. (Use
$\binom{d-j}{d-m}=\binom{d-j}{m-j}$.)

\subsection{The stratification}
In this subsection $K=\overline K$ and $\chr K\nmid d$, so that Lemma~\ref{lem:cnf} applies.
Let $\mathcal V_{d}\subseteq\bA^{d-2}_{K}=\operatorname{Spec}K[a_{2},\dots,a_{d-1}]$ denote the
locus of centred normal forms which are $\CA$-polynomials, together with the point $x^{d}$, which
is included for convenience as the unique point with $\tau=0$; by
Lemma~\ref{lem:cnf}, $\CA_{d,\chr K}$ holds if and only if $\mathcal V_d=\{0\}$. For
$S\subseteq\{2,\dots,d-1\}$ put
\[
  T_{S}=\{a\in\bA^{d-2}:\ a_{m}\neq0\iff m\in S\}\cong\mathbb{G}_{m}^{S},\qquad
  W_{S}=\mathcal V_{d}\cap T_{S}.
\]

\begin{corollary}[Stratification]\label{cor:strat}
$\mathcal V_{d}=\bigsqcup_{S\subseteq\{2,\dots,d-1\}}W_{S}$, a disjoint union of $2^{d-2}$ locally
closed subsets, and
\[
  W_{S}\;=\;\bigl\{a\in T_{S}\ :\ \mathcal R_{m}(a)=0\ \text{ for all } m\in S\bigr\}.
\]
Thus $W_{S}$ is cut out inside the $\#S$-dimensional torus $T_{S}$ by $\#S$ equations. Each $\mathcal R_{m}$
is weighted-homogeneous of weighted degree $dm$ for the weights $w(a_{j})=j$, so $W_{S}$ is stable
under the action $a_{j}\mapsto t^{j}a_{j}$ of $\mathbb{G}_{m}$, and for $S\neq\emptyset$ every
nonempty $W_{S}$ has dimension $\ge1$. Consequently, for $S\neq\emptyset$, $W_{S}=\emptyset$ if and
only if the $\#S$ elements $\mathcal R_{m}$ generate the unit ideal of $\mathcal O(T_{S})$; they can never
form a regular sequence there, since a regular sequence of length $\dim T_{S}$ would cut out a
nonempty finite set, which the $\mathbb G_{m}$-action forbids.
\end{corollary}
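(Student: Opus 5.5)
The plan is to derive each clause of Corollary~\ref{cor:strat} directly from Corollary~\ref{cor:suppred}, together with the homogeneity of the resultants and an elementary dimension count.

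\emph{Disjoint union and description of the strata.} The tori $T_S$, $S\subseteq\{2,\dots,d-1\}$, partition $\bA^{d-2}$ according to which coordinates vanish. Each $T_S$ is locally closed: it is the closed condition $a_m=0$ for $m\notin S$ intersected with the open condition $\prod_{m\in S}a_m\neq0$. Hence $\mathcal V_d=\bigsqcup_S W_S$. For $a\in T_S$ the polynomial $f$ is in centred normal form with $\supp(f)=S$. By Corollary~\ref{cor:suppred}, all $d-1$ conditions hold if and only if $\mathcal R_m(a)=0$ for every $m\in S$. Such an $f$ is either a $\CA$-polynomial or equal to $x^d$, and the latter happens only for $S=\emptyset$, which is exactly the point included by convention. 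This gives the stated equations for $W_S$, and they show $W_S$ is closed in $T_S$, hence locally closed in $\bA^{d-2}$.

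\emph{Weighted homogeneity.} I would use \eqref{eq:resprod}. Under $a_j\mapsto t^ja_j$ we have $f_t(x)=t^df(x/t)$, so the roots scale as $\alpha\mapsto t\alpha$. By \eqref{eq:Gm}, $G_m$ is a sum of terms $a_jx^{m-j}$, so $G_m^{(t)}(t\alpha)=t^mG_m(\alpha)$. Taking the product over the $d$ roots gives $\mathcal R_m(t\cdot a)=t^{dm}\mathcal R_m(a)$. This identity holds generically and hence as an identity of polynomials, since \eqref{eq:resprod} is a polynomial identity over $\bZ$. The action also preserves $T_S$, because $t\neq0$. Therefore $W_S$ is $\mathbb G_m$-stable.

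\emph{Dimension and the ideal-theoretic consequences.} For $S\neq\emptyset$, the action on a point $a\in T_S$ has finite stabiliser: $t^m a_m=a_m$ with $a_m\neq0$ forces $t^m=1$. So each orbit is one-dimensional, and a nonempty $W_S$ contains an orbit and has dimension at least $1$. Since $T_S$ is affine and $K$ is algebraically closed, the Nullstellensatz gives that $W_S=\emptyset$ if and only if the $\mathcal R_m$ generate the unit ideal of $\mathcal O(T_S)=K[a_m,a_m^{-1}:m\in S]$.

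For the regular-sequence claim, suppose the $\#S$ elements formed a regular sequence in the Cohen--Macaulay ring $\mathcal O(T_S)$, generating a proper ideal. Then the quotient would have dimension $\#S-\#S=0$, and its zero set $W_S$ would be nonempty and finite. This contradicts $\dim W_S\ge1$. If instead the ideal is the unit ideal, I would interpret ``regular sequence'' as requiring a proper quotient, so that case is excluded by convention.

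The only delicate points are this convention and the dimension formula for regular sequences. I expect the main care to be needed in checking that the homogeneity identity survives in characteristic $p$, where $H_i$ may drop degree, and this is handled by the polynomial nature of \eqref{eq:resprod}.
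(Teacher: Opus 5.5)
Your proposal is correct and follows the paper's proof: the decomposition and the equations for $W_S$ come from Corollary~\ref{cor:suppred}, weighted homogeneity comes from scaling the roots in \eqref{eq:resprod}, and $\dim W_S\ge1$ comes from the one-dimensional $\mathbb G_m$-orbits. You also fill in steps the paper leaves implicit: the finite stabilisers, the Nullstellensatz for the unit-ideal criterion, and the dimension count for a regular sequence. For the last step, properness of the quotient is already part of the standard definition of a regular sequence, so no extra convention is needed.
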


\begin{proof}
The decomposition and the description of $W_S$ are Corollary~\ref{cor:suppred}. For the grading,
$a_{j}\mapsto t^{-j}a_{j}$ corresponds to $f(x)\mapsto t^{-d}f(tx)$, under which the roots scale by
$t^{-1}$ and $H_{i}(f)(\alpha)$ scales by $t^{-(d-i)}$; hence by \eqref{eq:resprod} $\mathcal R_{m}=\Res(f,H_{d-m}(f))$
scales by $t^{-dm}$. A nonzero point of $W_{S}$ therefore has a one-dimensional orbit contained in
$W_{S}$, and $W_S\subseteq T_S$ contains no zero coordinate.
\end{proof}

\begin{remark}
The standard parameter count, $d-1$ equations in $d-1$ unknowns, concerns the whole system and is destroyed
as soon as one passes to a subvariety. Corollary~\ref{cor:strat} shows that the count is inherited
by every stratum. Since $\dim T_{S}=\#S$ and the $\mathbb{G}_m$-orbits are $1$-dimensional, the
projectivised system on $W_S$ consists of $\#S$ equations on a space of dimension $\#S-1$: the
Casas--Alvero conditions are overdetermined by exactly one on \emph{every} stratum.
\end{remark}

\subsection{New proofs of the characteristic-$p$ propositions of \cite{BLSW2007}}
\begin{lemma}[Kummer support lemma]\label{lem:kummersupp}
Let $\chr K=p>0$, let $f\in K[x]$ be monic of degree $d$ with $f(0)=0$ and
$\gcd\bigl(f,H_{i}(f)\bigr)\neq1$ in $\overline K[x]$ for all $i=1,\dots,d-1$, and let
$S=\supp(f)$. Then for every $m\in S$ there exists $j\in\{0\}\cup(S\cap[1,m-1])$ with
\[
  p\nmid\binom{d-j}{m-j}.
\]
\end{lemma}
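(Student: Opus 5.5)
Fix $m\in S$ and argue by contradiction, the natural route being via the explicit form \eqref{eq:Gm} of $G_{m}=H_{d-m}(f)$. Suppose that $p\mid\binom{d-j}{m-j}$ for every $j\in\{0\}\cup(S\cap[1,m-1])$. In \eqref{eq:Gm} the sum runs over $j\in\{0\}\cup\supp(f)$ with $j\le m$, and the term $j=m$ is the constant term $\binom{d-m}{0}a_{m}=a_{m}$. Every other term carries a coefficient $\binom{d-j}{m-j}$ with $j\in\{0\}\cup(S\cap[1,m-1])$, and under our assumption that coefficient vanishes in $K$. So the contradiction hypothesis forces
\[
  G_{m}\;=\;a_{m}
\]
in $K[x]$: a nonzero constant, because $m\in S$ means $a_{m}\neq0$.

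A nonzero constant shares no non-constant factor with $f$. Hence $\gcd\bigl(f,H_{d-m}(f)\bigr)=1$ in $\overline K[x]$. This contradicts the hypothesis at the index $i=d-m$, which does lie in $\{1,\dots,d-1\}$: since $S\subseteq\{1,\dots,d-1\}$ by the definition of $\supp$, we have $1\le m\le d-1$. The contradiction shows that some $j$ in the stated range has $p\nmid\binom{d-j}{m-j}$.

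Two points need checking; neither is a real obstacle. First, \eqref{eq:Gm} was derived from \eqref{eq:hasse} in a setting that did not depend on the characteristic, using only $\binom{d-j}{d-m}=\binom{d-j}{m-j}$. It is an identity over $\bZ$ specialised to $K$, so it holds in characteristic $p$ as written. Second, $a_{d}=0$ and the terms with $j\notin\{0\}\cup S$ vanish, so the index set in \eqref{eq:Gm} is exactly $\{0\}\cup(S\cap[1,m])$, which matches the range in the statement once the term $j=m$ is split off.

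Note that the lemma does not need $f$ in centred normal form, nor that $p\nmid d$. The case $m=1\in S$ is allowed, and there the admissible set is just $\{0\}$, so the lemma asserts $p\nmid d$. The argument covers this case without change, because $G_{1}=dx+a_{1}$.
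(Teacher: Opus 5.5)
Your proof is correct and is essentially the paper's own argument: if every $\binom{d-j}{m-j}$ with $j\in\{0\}\cup(S\cap[1,m-1])$ vanished in $K$, then $G_m=H_{d-m}(f)$ would collapse to the nonzero constant $a_m$, contradicting the Casas--Alvero condition at $i=d-m$. Your additional checks are accurate and slightly more careful than the paper's write-up: that the formula for $G_m$ is characteristic-free, that $1\le m\le d-1$ so the index $d-m$ is admissible, and that the case $m=1$ reduces to $p\nmid d$.
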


\begin{proof}
Fix $m\in S$ and consider $G_{m}=H_{d-m}(f)$ as in \eqref{eq:Gm}, where the sum runs over
$j\in\{0\}\cup S$ with $j\le m$. If $\binom{d-j}{m-j}\equiv0\pmod p$ for every such $j<m$, then all
terms of positive degree vanish and $G_{m}=\binom{m}{m}a_{m}=a_{m}\in K^{\times}$ is a nonzero
constant; then $\gcd(f,G_{m})=1$ and the Casas--Alvero condition at $i=d-m$ fails, contrary to hypothesis. Here $j=m$ contributes the constant $a_{m}$, and $j>m$ does not occur.
\end{proof}

\begin{proposition}[$=$ {\cite[Prop.~2.5]{BLSW2007}}]\label{prop:BLSW1}
Let $\chr K=p$ and $d=p^{k}$. Then there is no $\CA$-polynomial of degree $d$ over $K$.
\end{proposition}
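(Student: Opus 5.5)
The plan is to apply the Kummer support lemma (Lemma~\ref{lem:kummersupp}) to the smallest element of the support. We cannot use the centred normal form, because Lemma~\ref{lem:cnf} needs $\chr K\nmid d$, which fails here. Instead we only translate an arbitrary root to the origin.

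Suppose $f$ is a $\CA$-polynomial of degree $d=p^{k}$ over $K$; we may pass to $\overline K$, since Definition~\ref{def:CA} is already phrased over $\overline K$. Choose any root $\theta$ of $f$ and replace $f$ by $f(x+\theta)$. Translation preserves the Casas--Alvero conditions, because $H_i(f)(x+\theta)=H_i(f(x+\theta))$ by the Taylor-expansion characterisation of Hasse derivatives. It also preserves the property of not being a $d$-th power of a linear polynomial. So we may assume $f(0)=0$, that is, $a_d=0$, and
\[
f=x^{d}+\sum_{m=1}^{d-1}a_m x^{d-m}.
\]
Because $f\neq x^{d}$, the set $S=\supp(f)\subseteq\{1,\dots,d-1\}$ is nonempty.

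Let $m=\min S$. By Lemma~\ref{lem:kummersupp} there is some $j\in\{0\}\cup(S\cap[1,m-1])$ with $p\nmid\binom{d-j}{m-j}$. Minimality of $m$ makes $S\cap[1,m-1]$ empty, so the only candidate is $j=0$, and the lemma gives $p\nmid\binom{d}{m}$. On the other hand $1\le m\le d-1$ forces $v_p(m)<k$. Since $p^{k}\mid d$, Kummer's theorem (Lemma~\ref{lem:kummer}) then gives $p\mid\binom{d}{m}$, a contradiction. Hence no $\CA$-polynomial of degree $p^{k}$ exists.

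Concretely, for this $m$ the polynomial $G_m=H_{d-m}(f)=\binom{d}{m}x^{m}+a_m$ reduces to the nonzero constant $a_m$ in characteristic $p$. Then $\gcd(f,G_m)=1$, so the condition at index $d-m$ fails. There is no real obstacle in this argument. The only point needing care is that Theorem~\ref{thm:vacuity} and Lemma~\ref{lem:kummersupp} require only $f(0)=0$ and not the full centred normal form, so the hypothesis $\chr K\nmid d$ is never used.
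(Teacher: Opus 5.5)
Your proof is correct and follows essentially the same route as the paper. Like the paper, you translate an arbitrary root to the origin, take $m=\min\supp(f)$, and apply Lemma~\ref{lem:kummersupp} with the forced choice $j=0$. Kummer's theorem then gives $p\mid\binom{p^{k}}{m}$, which is the contradiction.
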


\begin{proof}
Passing to $\overline K$ and translating a root of $f$ to the origin, we may assume
$K=\overline K$ and $f(0)=0$. Suppose $S\neq\emptyset$ and let $m=\min S$. Taking
$j=0$ (the only admissible choice) in Lemma~\ref{lem:kummersupp} gives $p\nmid\binom{p^{k}}{m}$;
but $\binom{p^{k}}{m}\equiv0\pmod p$ for all $0<m<p^{k}$ by Lemma~\ref{lem:kummer}. Hence
$S=\emptyset$ and $f=x^{d}$.
\end{proof}

\begin{proposition}[$=$ {\cite[Prop.~2.7]{BLSW2007}}]\label{prop:BLSW2}
Let $\chr K=p$, let $K$ be perfect, write $d=np^{e}$ with $p\nmid n$, and let $f$ be a monic
polynomial of degree $d$ over $K$ satisfying $\gcd(f,H_i(f))\ne1$ for all $i$. Then $f=q^{p^{e}}$
for a monic $q\in K[x]$ of degree $n$ satisfying the same hypotheses; conversely $q^{p^{e}}$
satisfies the hypotheses in degree $d$ whenever $q$ satisfies them in degree $n$. Consequently
$\CA_{n,p}\Leftrightarrow\CA_{d,p}$, i.e.\ $p\in\bad(d)$ if and only if $p\in\bad(n)$.
\end{proposition}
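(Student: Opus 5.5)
We prove Proposition~\ref{prop:BLSW2} by showing that the support of $f$, after translating a root to the origin, consists of multiples of $p^{e}$; then $f=q^{p^{e}}$ by perfectness. Since a Frobenius-type identity turns the Hasse derivatives of $q^{p^e}$ into Hasse derivatives of $q$, the hypotheses pass back and forth between $f$ and $q$. The case $e=0$ is trivial, so assume $e\ge1$. Work over $\overline K$ first, and translate a root of $f$ to the origin; translation commutes with $p^{e}$-th powers, since $(x+\theta)^{p^e}=x^{p^e}+\theta^{p^e}$. Write $S=\supp(f)$, where now $a_d=0$ but possibly $a_1\ne0$.

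\emph{Step 1: $S\subseteq p^{e}\bZ$.} Suppose not, and let $m$ be the least element of $S$ with $p^{e}\nmid m$. Lemma~\ref{lem:kummersupp} gives $j\in\{0\}\cup(S\cap[1,m-1])$ with $p\nmid\binom{d-j}{m-j}$. By minimality of $m$, this $j$ is divisible by $p^{e}$. Hence $p^{e}\mid d-j$ while $v_p(m-j)<e$, and Lemma~\ref{lem:kummer} gives $p\mid\binom{d-j}{m-j}$, a contradiction. Thus $f=\sum_{p^e\mid j}a_jx^{d-j}=\sum_{l}a_{lp^{e}}(x^{p^{e}})^{n-l}$. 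Extracting $p^{e}$-th roots of the coefficients gives $f=q^{p^{e}}$ with $q=\sum_l a_{lp^e}^{1/p^e}x^{n-l}$, monic of degree $n$. Over a perfect $K$ the roots $a^{1/p^e}$ already lie in $K$. Moreover, the translation back to the original $f$ preserves the shape $q^{p^e}$, by the identity recalled above. This step is where the Kummer support lemma does the work, and it is the main point; the rest is bookkeeping.

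\emph{Step 2: Hasse derivatives of $p^{e}$-th powers.} In characteristic $p$ we have $q(x+y)^{p^e}=q^{(p^e)}(x^{p^e}+y^{p^e})$, where $q^{(p^e)}$ denotes $q$ with coefficients raised to the $p^e$-th power. More directly, $f(x+y)=q(x+y)^{p^e}=\bigl(\sum_k H_k(q)(x)y^k\bigr)^{p^e}=\sum_k H_k(q)(x)^{p^e}y^{kp^e}$. Comparing coefficients of $y^i$ gives
\[
  H_i(f)=\begin{cases}H_k(q)^{p^{e}},& i=kp^{e},\\ 0,& p^{e}\nmid i.\end{cases}
\]

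\emph{Step 3: conclusion.} For $p^e\nmid i$ we have $H_i(f)=0$, so $\gcd(f,H_i(f))=f\ne1$ and these conditions hold automatically. For $i=kp^{e}$ with $1\le k\le n-1$, the polynomials $f=q^{p^e}$ and $H_k(q)^{p^e}$ share a root if and only if $q$ and $H_k(q)$ do. This proves both directions of the equivalence of hypotheses. Finally, $f=(x-\alpha)^{d}$ if and only if $q=(x-\alpha^{1/p^{e}})^{n}$, by unique factorisation and injectivity of Frobenius. So $f$ is a $\CA$-polynomial exactly when $q$ is, which gives $\CA_{n,p}\Leftrightarrow\CA_{d,p}$, i.e.\ $p\in\bad(d)\iff p\in\bad(n)$.
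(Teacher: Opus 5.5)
Your proof is correct and is essentially the paper's own argument: Lemma~\ref{lem:kummersupp} together with Kummer's theorem forces $\supp(f)\subseteq p^{e}\bZ$ (your least $m\in S$ with $p^{e}\nmid m$ plays the role of the paper's induction along $S$), and applying Frobenius to the Taylor expansion of $q$ gives $H_{kp^{e}}(q^{p^{e}})=H_{k}(q)^{p^{e}}$ and $H_{i}(q^{p^{e}})=0$ for $p^{e}\nmid i$, which matches the conditions for $f$ and for $q$. The only step to reorder is the appeal to perfectness: the translated coefficients lie in $K(\theta)$ rather than necessarily in $K$, so first undo the translation to get $f=q^{p^{e}}$ with $q\in\overline K[x]$, and only then use that the coefficients of $f\in K[x]$ are the $p^{e}$-th powers of those of $q$, so that $q\in K[x]$.
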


\begin{proof}
We first work over $\overline K$ and translate a root of $f$ to the origin, so that $f(0)=0$; let
$S=\supp(f)$. We show by induction along $S$ that
$p^{e}\mid m$ for every $m\in S$. Let $m\in S$ and assume $p^{e}\mid j$ for all $j\in S$ with $j<m$.
Choose $j\in\{0\}\cup(S\cap[1,m-1])$ as in Lemma~\ref{lem:kummersupp}, so
$p\nmid\binom{d-j}{m-j}$. We have $p^{e}\mid d$ and $p^{e}\mid j$, hence $p^{e}\mid d-j$. If
$p^{e}\nmid m$ then $p^{e}\nmid m-j$, so $v_{p}(m-j)<e\le v_{p}(d-j)$ and Lemma~\ref{lem:kummer}
gives $p\mid\binom{d-j}{m-j}$, a contradiction. Hence $p^{e}\mid m$.

Therefore $a_{m}=0$ unless $p^{e}\mid m$; as $p^{e}\mid d$, every exponent $d-m$ occurring in $f$ is
divisible by $p^{e}$, i.e.\ $f\in K[x^{p^{e}}]$. Since $\overline K$ is perfect, $f=q^{p^{e}}$ with $\deg q=n$; as $f\in K[x]$ and $K$ is perfect,
$q$ has coefficients in $K$, and undoing the translation (a $p^{e}$-th root of a translate is a
translate of a $p^{e}$-th root) returns the assertion for the original $f$. Finally, applying Frobenius to $q(x+\theta)=\sum_{i'}H_{i'}(q)(\theta)x^{i'}$ gives
$q(x+\theta)^{p^{e}}=\sum_{i'}H_{i'}(q)(\theta)^{p^{e}}x^{i'p^{e}}$, whence
$H_{i}(q^{p^{e}})=0$ for $p^{e}\nmid i$ and
$H_{p^{e}i'}(q^{p^{e}})=\bigl(H_{i'}(q)\bigr)^{p^{e}}$; since $f$ and $q$ have the same roots, the
conditions for $f$ at the indices $i=p^{e}i'$, $1\le i'\le n-1$, are exactly the conditions for $q$,
and the conditions at the remaining indices are vacuous. This equivalence of conditions also proves
the converse statement, and $q^{p^{e}}=(x-\alpha)^{d}$ if and only if $q=(x-\alpha)^{n}$.
\end{proof}

Together with the transfer principle (T) of \cite{BLSW2007}, Propositions~\ref{prop:BLSW1} and
\ref{prop:BLSW2} give all presently known infinite families of degrees. Their proofs use only Theorem~\ref{thm:vacuity} and Kummer's theorem.

\section{The scenario reduction and descent-set counts}\label{sec:uni}

Fixing the support removes the vacuous conditions; fixing in addition which root realises each
surviving condition removes the coefficients themselves. The resulting system is unit lower
triangular, and when a single root suffices it degenerates to one integer, which counts
permutations with a prescribed descent set. This section carries out that reduction and draws its
consequences in both characteristics.

Following \cite[\S1.2]{CLO2014}, a root $\theta$ of $f$ is \emph{recycled} if it belongs to some
minimal set of roots realising all $d-1$ Casas--Alvero conditions; this is the notion used
throughout, in the introduction and below.

Throughout, $K=\overline K$, $f=x^{d}+\sum_{i=1}^{s}a_{m_{i}}x^{d-m_{i}}$ is monic with $f(0)=0$ and
$\supp(f)=S=\{m_{1}<\cdots<m_{s}\}\subseteq\{1,\dots,d-1\}$; we set $m_{0}=0$, $a_{m_{0}}=1$ and
$m_{s+1}=d$. By Corollary~\ref{cor:suppred} the conditions to be realised are exactly those indexed
by $S$, and the root realising such a condition is nonzero.

\subsection{Triangularity}

\begin{theorem}[Triangularity]\label{thm:triangular}
Let $\theta_{1},\dots,\theta_{t}\in K^{\times}$ and let $\sigma:\{1,\dots,s\}\to\{1,\dots,t\}$ be a
scenario. Then $f$ satisfies $H_{d-m_{k}}(f)\bigl(\theta_{\sigma(k)}\bigr)=0$ for $k=1,\dots,s$ and
$f(\theta_{l})=0$ for $l=1,\dots,t$ if and only if
\begin{equation}\label{eq:gen-triang}
  a_{m_{k}}\;=\;-\sum_{i<k}\binom{d-m_{i}}{m_{k}-m_{i}}\,a_{m_{i}}\,\theta_{\sigma(k)}^{\,m_{k}-m_{i}}
  \qquad(1\le k\le s)
\end{equation}
and $\sum_{i=0}^{s}a_{m_{i}}\theta_{l}^{-m_{i}}=0$ for $1\le l\le t$. The system
\eqref{eq:gen-triang} is unit lower triangular; it determines $a_{m_{1}},\dots,a_{m_{s}}$ uniquely
as integer polynomials in $\theta_{1},\dots,\theta_{t}$. Hence the coefficients of $f$ are eliminated
and there remain $t$ equations in the $t$ unknowns $\theta_{1},\dots,\theta_{t}$, that is, $t$
equations in $t-1$ unknowns modulo the weighted $\mathbb{G}_{m}$-action.
\end{theorem}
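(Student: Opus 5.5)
The plan is to evaluate each condition directly using the explicit form \eqref{eq:Gm} of $G_{m}=H_{d-m}(f)$, and then to read off the triangular structure.

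\textbf{The derivative conditions.} For $k\ge1$, formula \eqref{eq:Gm} gives
\[
  G_{m_{k}}(\theta)=\sum_{i\le k}\binom{d-m_{i}}{m_{k}-m_{i}}a_{m_{i}}\theta^{m_{k}-m_{i}}.
\]
The summation runs over $j\in\{0\}\cup S$ with $j\le m_{k}$, which means over $i=0,\dots,k$. The term $i=k$ contributes exactly $a_{m_{k}}$, since $\binom{d-m_k}{0}=1$ and the power of $\theta$ is $\theta^{0}$. Setting $\theta=\theta_{\sigma(k)}$, the equation $G_{m_{k}}(\theta_{\sigma(k)})=0$ is therefore literally equivalent to \eqref{eq:gen-triang}. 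No division is needed, so this holds in every characteristic.

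\textbf{The root conditions.} Write $f(\theta)=\sum_{i=0}^{s}a_{m_{i}}\theta^{d-m_{i}}$. Since $\theta_{l}\neq0$, dividing by $\theta_{l}^{d}$ shows that $f(\theta_{l})=0$ is equivalent to $\sum_{i}a_{m_{i}}\theta_{l}^{-m_{i}}=0$. This proves the ``if and only if''.

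\textbf{Triangularity and integrality.} View \eqref{eq:gen-triang} as a linear system in $(a_{m_1},\dots,a_{m_s})$, with $a_{m_0}=1$ fixed. The $k$-th equation has coefficient $1$ on $a_{m_k}$, and otherwise involves only $a_{m_i}$ with $i<k$. So the matrix is unit lower triangular, and forward substitution determines the $a_{m_k}$ uniquely. An induction on $k$ shows each $a_{m_k}\in\bZ[\theta_1,\dots,\theta_t]$: the right-hand side is a $\bZ$-combination of $a_{m_i}$ with $i<k$, times monomials in $\theta_{\sigma(k)}$.

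\textbf{Counting and the group action.} Substituting these polynomials into the $t$ root equations leaves $t$ equations in $\theta_1,\dots,\theta_t$. For the last claim, induction gives that $a_{m_k}$ is homogeneous of degree $m_k$ in the $\theta$'s, because each step multiplies a degree-$m_i$ term by $\theta^{m_k-m_i}$. The root equations are then invariant under $\theta_l\mapsto\lambda\theta_l$: each term $a_{m_i}\theta_l^{-m_i}$ has degree $0$. Combined with $a_m\mapsto\lambda^m a_m$, this is the weighted $\mathbb{G}_m$-action, so modulo it there are $t-1$ unknowns.

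Nothing here is a genuine obstacle. The only care needed is checking that the diagonal entry really is $1$ even in characteristic $p$, which holds because $\binom{d-m_k}{0}=1$. One should also note that the nonzero-ness of the $\theta$'s, guaranteed by Corollary~\ref{cor:suppred}, is what allows dividing by $\theta_l^{d}$.
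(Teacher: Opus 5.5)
Your proof is correct and follows the same route as the paper. You evaluate $G_{m_k}=H_{d-m_k}(f)$ via \eqref{eq:Gm}, isolate the diagonal term $\binom{d-m_k}{0}a_{m_k}=a_{m_k}$, and use $\theta_l\neq0$ to rewrite $f(\theta_l)=0$; the paper factors $f=x^{d-m_s}\varphi$ instead of dividing by $\theta_l^{d}$, which is the same step. Your induction showing that $a_{m_k}$ is homogeneous of degree $m_k$ in the $\theta$'s, which justifies the count modulo the weighted $\mathbb{G}_m$-action, is a useful detail that the paper leaves implicit.
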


\begin{proof}
By \eqref{eq:Gm}, $H_{d-m_{k}}(f)(\theta)=\sum_{i\le k}\binom{d-m_{i}}{m_{k}-m_{i}}a_{m_{i}}
\theta^{m_{k}-m_{i}}$, whose $i=k$ term is $\binom{d-m_{k}}{0}a_{m_{k}}=a_{m_{k}}$; solving for
$a_{m_{k}}$ gives \eqref{eq:gen-triang}. Since $f=x^{d-m_{s}}\varphi$ with
$\varphi=\sum_{i}a_{m_{i}}x^{m_{s}-m_{i}}$ and $\theta_{l}\neq0$, one has $f(\theta_{l})=0$ if and
only if $\sum_{i}a_{m_{i}}\theta_{l}^{-m_{i}}=0$.
\end{proof}

The simplest case is $t=1$. Scaling by the $\mathbb{G}_{m}$-action we may then take $\theta_{1}=1$,
and \eqref{eq:gen-triang} becomes a recursion with integer coefficients depending only on $d$ and
$S$.

\begin{definition}\label{def:uni}
Call $f$ \emph{uniradical} if $f(0)=0$ and a single $\theta\in K^{\times}$ realises every condition
indexed by $\supp(f)$; equivalently, $f$ has exactly the two recycled roots $0$ and $\theta$. For
$S=\{m_{1}<\cdots<m_{s}\}\subseteq\{1,\dots,d-1\}$ define integers $z_{0},\dots,z_{s}$ by
\begin{equation}\label{eq:recursion}
  z_{0}=1,\qquad z_{k}=-\sum_{i<k}\binom{d-m_{i}}{m_{k}-m_{i}}\,z_{i}\quad(1\le k\le s).
\end{equation}
\end{definition}

\begin{corollary}\label{cor:uni-criterion}
There exists a uniradical Casas--Alvero polynomial of degree $d$ over $K$ with a root at the origin
and support $S$ if and only if
\[
  \sum_{i=0}^{s}z_{i}=0\ \text{ in }K\qquad\text{and}\qquad z_{i}\neq0\ \text{ in }K\ \text{ for all }i .
\]
In that case those polynomials are exactly $x^{d}+\sum_{i\ge1}z_{i}\theta^{m_{i}}x^{d-m_{i}}$ with
$\theta\in K^{\times}$.
\end{corollary}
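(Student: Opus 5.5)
We specialise Theorem~\ref{thm:triangular} to $t=1$ and then use the weighted $\mathbb G_m$-action to normalise $\theta$. Let $\theta\in K^{\times}$ be the single nonzero recycled root, with $\sigma\equiv1$. By Corollary~\ref{cor:suppred}, $f$ (with $f(0)=0$ and support $S$) satisfies all Casas--Alvero conditions exactly when each condition indexed by $m_k\in S$ has a common root with $f$, and that root is nonzero. In the uniradical case this root is $\theta$ for every $k$.

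The first step is to show that the triangular system \eqref{eq:gen-triang} with $\theta_{\sigma(k)}=\theta$ has the unique solution $a_{m_k}=z_k\theta^{m_k}$. I would prove this by induction on $k$. Suppose $a_{m_i}=z_i\theta^{m_i}$ for all $i<k$. Then
\[
a_{m_k}=-\sum_{i<k}\binom{d-m_i}{m_k-m_i}z_i\theta^{m_i}\theta^{m_k-m_i}=z_k\theta^{m_k}
\]
by \eqref{eq:recursion}, with base case $a_{m_0}=1=z_0$. Because the system is unit lower triangular, this solution is the only one.

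The second step handles the remaining equation $f(\theta)=0$. By Theorem~\ref{thm:triangular} it is equivalent to $\sum_i a_{m_i}\theta^{-m_i}=\sum_i z_i=0$ in $K$. This condition does not involve $\theta$, so it is exactly the scaling invariance.

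The third step matches the support. The polynomial so constructed has support exactly $S$ if and only if every $a_{m_i}=z_i\theta^{m_i}$ is nonzero, which, since $\theta\neq0$, means $z_i\neq0$ in $K$ for all $i$. This gives the condition $z_i\neq0$.

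Next come the two directions. For sufficiency, assume both conditions hold and take any $\theta\in K^\times$. Then $f=x^d+\sum_{i\ge1}z_i\theta^{m_i}x^{d-m_i}$ has support $S$, satisfies $f(0)=0$ and $f(\theta)=0$, and has $H_{d-m_k}(f)(\theta)=0$ for every $k$. Corollary~\ref{cor:suppred} then gives $\gcd(f,H_i(f))\neq1$ for all $i$. We still need $f\neq(x-\alpha)^d$ to conclude that $f$ is a $\CA$-polynomial. This holds because $f$ has the two distinct roots $0$ and $\theta$. Also $S\neq\emptyset$ is implicit: if $S=\emptyset$ then $\sum z_i=1\neq0$. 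For necessity, a uniradical $\CA$-polynomial with support $S$ realises every surviving condition at some single $\theta\neq0$, by Definition~\ref{def:uni} together with Corollary~\ref{cor:suppred}. The first three steps then force both conditions and the stated form of $f$.

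The main obstacle is mostly bookkeeping. We must ensure that ``uniradical'' is used as in Definition~\ref{def:uni}, namely that one $\theta$ realises all conditions indexed by $\supp(f)$. The point to check is that the nonvanishing $z_i\neq0$ is what makes the support equal to $S$, rather than an extra hypothesis. No genuine difficulty is expected beyond this.
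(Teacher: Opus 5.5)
Your proposal is correct and follows essentially the same route as the paper. You specialise Theorem~\ref{thm:triangular} to $t=1$ to force $a_{m_i}=z_i\theta^{m_i}$, reduce $f(\theta)=0$ to $\sum_i z_i=0$, identify $z_i\neq0$ with the support being exactly $S$, and for the converse invoke the vacuity of the off-support conditions and the two distinct roots $0,\theta$ to exclude $f=(x-\gamma)^d$. The normalisation $\theta=1$ you announce at the start is never actually used, which is harmless since you keep $\theta$ general throughout.
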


\begin{proof}
By Theorem~\ref{thm:triangular} with $t=1$ and $\theta_{1}=\theta$, the conditions force
$a_{m_{i}}=z_{i}\theta^{m_{i}}$, and $f(\theta)=0$ becomes $\theta^{d}\sum_{i}z_{i}=0$. The support of
the resulting $f$ is $S$ precisely when no $z_{i}$ vanishes. Conversely such an $f$ satisfies every
condition indexed by $S$, and the conditions off $S$ are vacuous by Theorem~\ref{thm:vacuity};
finally $f$ has the two distinct roots $0$ and $\theta$, so $f\neq(x-\gamma)^{d}$.
\end{proof}

\subsection{The obstruction is a descent-set count}
For a permutation $w=w_{1}w_{2}\cdots w_{d}$ of $\{1,\dots,d\}$ let
$\Des(w)=\{i: w_{i}>w_{i+1}\}\subseteq\{1,\dots,d-1\}$ be its descent set, and for
$S\subseteq\{1,\dots,d-1\}$ let
\[
  \beta_{d}(S)\;=\;\#\{w\in\mathfrak S_{d}\ :\ \Des(w)=S\}
\]
be the number of permutations of $\{1,\dots,d\}$ with descent set exactly $S$.

\begin{theorem}\label{thm:pascal}
For every $d\ge2$ and every $S\subseteq\{1,\dots,d-1\}$,
\begin{equation}\label{eq:Ddet}
  (-1)^{s}\sum_{i=0}^{s}z_{i}
  \;=\;\det\left[\binom{d-m_{i}}{\,m_{j+1}-m_{i}\,}\right]_{i,j=0}^{s}
  \;=\;\beta_{d}(S),
\end{equation}
with the conventions $m_{0}=0$ and $m_{s+1}=d$. In particular $\sum_{i}z_{i}$ never vanishes over
$\bZ$, because every subset of $\{1,\dots,d-1\}$ is the descent set of at least one permutation.
\end{theorem}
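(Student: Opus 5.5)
The plan is to prove both equalities through one combinatorial expansion: write $\sum_i z_i$ and the determinant each as a signed sum over subsets $T\subseteq S$ of multinomial coefficients, and then recognise that sum as MacMahon's inclusion–exclusion for descent sets.

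\emph{Step 1: unfold the recursion.} Iterating \eqref{eq:recursion} expresses each $z_k$ as a sum over chains $0=i_0<i_1<\cdots<i_r=k$:
\[
z_k=\sum_{\text{chains}}(-1)^r\prod_{j=1}^{r}\binom{d-m_{i_{j-1}}}{m_{i_j}-m_{i_{j-1}}}.
\]
This can be checked by induction on $k$: the last step of a chain reproduces exactly the term $-\binom{d-m_i}{m_k-m_i}z_i$ of the recursion. The product telescopes to a multinomial coefficient,
\[
\prod_{j=1}^{r}\binom{d-m_{i_{j-1}}}{m_{i_j}-m_{i_{j-1}}}=\frac{d!}{m_{i_1}!\,(m_{i_2}-m_{i_1})!\cdots(d-m_{i_r})!}=:\alpha_d(T),
\]
where $T=\{m_{i_1},\dots,m_{i_r}\}\subseteq S$. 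Each subset $T\subseteq S$ arises from exactly one chain, namely the one ending at its largest element, with $T=\emptyset$ corresponding to $k=0$. Summing over $k$ therefore gives
\[
\sum_{i=0}^{s}z_i=\sum_{T\subseteq S}(-1)^{|T|}\alpha_d(T).
\]

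\emph{Step 2: the combinatorial identity.} The number $\alpha_d(T)$ counts permutations $w$ with $\Des(w)\subseteq T$. To see this, choose which values fill each block of positions cut out by $T$ and arrange each block increasingly. Hence $\alpha_d(T)=\sum_{U\subseteq T}\beta_d(U)$, and Möbius inversion on the Boolean lattice gives
\[
\beta_d(S)=\sum_{T\subseteq S}(-1)^{|S|-|T|}\alpha_d(T).
\]
Comparing with Step 1 yields $(-1)^s\sum_i z_i=\beta_d(S)$.

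\emph{Step 3: the determinant.} In the matrix $M_{ij}=\binom{d-m_i}{m_{j+1}-m_i}$ the entry vanishes when $j+1<i$, and the entry at $j=i-1$ equals $\binom{d-m_i}{0}=1$. So $M$ is upper Hessenberg with unit subdiagonal. The standard expansion of such a determinant, by induction along the last column or by direct Leibniz expansion, writes it as a signed sum over ways to cut $0,1,\dots,s+1$ into consecutive runs. Each run from $i$ to $j+1$ contributes the entry $M_{ij}$ and a sign $(-1)^{j-i}$ coming from the subdiagonal cycle. The chosen cut points form a subset $T\subseteq S$, the product of entries is again the telescoping multinomial $\alpha_d(T)$, and the total sign is $(-1)^{s-|T|}$. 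Thus $\det M=\sum_{T\subseteq S}(-1)^{s-|T|}\alpha_d(T)=\beta_d(S)$.

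An alternative for this step is to observe that $(z_0,\dots,z_s)$ solves a unit triangular system obtained from the first $s$ columns, and apply Cramer's rule to the bordered system. However, the Hessenberg expansion is cleaner and matches Steps 1–2 directly.

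\emph{Main obstacle and final claim.} The only step I expect to need care is the sign bookkeeping in the Hessenberg expansion. I would verify it on $s=1$, where $\det\begin{pmatrix}\binom{d}{m_1}&\binom{d}{d}\\ 1&\binom{d-m_1}{d-m_1}\end{pmatrix}=\binom{d}{m_1}-1=\beta_d(\{m_1\})$. Nonvanishing over $\bZ$ then follows because $\beta_d(S)\ge1$. For instance, the permutation that lists the blocks determined by $S$ in decreasing order of values, each block increasing internally, has descent set exactly $S$.
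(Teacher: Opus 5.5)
Your proof is correct, and it takes a different route from the paper's.

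The paper gets the first equality by linear algebra alone. The recursion \eqref{eq:recursion} is the unit lower triangular system $M\mathbf z=0$ with $z_0=1$, so Cramer's rule gives $z_i=(-1)^i\det M^{(i)}$. Bordering $M$ with the row $k=s+1$, which is all ones because $\binom{d-m_i}{d-m_i}=1$, turns $(-1)^s\sum_i z_i$ into a cofactor expansion of the determinant. This is the alternative you mention at the end of Step~3. The second equality is then simply quoted from MacMahon via Stanley.

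You instead expand both the sum and the determinant into the same signed sum $\sum_{T\subseteq S}(-1)^{s-|T|}\alpha_d(T)$ of multinomial coefficients. For the $z_i$ you do this through chains, and for the determinant through the unit-subdiagonal Hessenberg structure. Your sign there is right: there are $|T|+1$ blocks of total size $s+1$. You then recognise the sum as $\beta_d(S)$ by inclusion--exclusion, so you reprove MacMahon's formula rather than cite it.

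The paper's argument is shorter and explains why a determinant of this shape appears at all. Yours is self-contained and describes the individual $z_k$, not just their sum. Since $z_0,\dots,z_k$ depend only on $m_1,\dots,m_k$, your Step~1 applied to the initial segments of $S$ gives
\[
(-1)^k z_k=\beta_d(\{m_1,\dots,m_k\})+\beta_d(\{m_1,\dots,m_{k-1}\})\qquad(k\ge1).
\]
This proves the sign pattern $(-1)^iz_i>0$, which \S\ref{sec:computations} only checks numerically for $d\le12$. The same identity also follows from the theorem itself applied to initial segments, but the paper does not draw this consequence.

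Your explicit witness for $\beta_d(S)\ge1$ is also more precise than the paper's one-line justification. The blocks are listed in decreasing order of values, each block increasing internally.
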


\begin{proof}
Let $M=\bigl[\binom{d-m_{i}}{m_{k}-m_{i}}\bigr]$ with rows $k=1,\dots,s$ and columns
$i=0,\dots,s$; by \eqref{eq:recursion} the vector $\mathbf z=(z_{0},\dots,z_{s})$ is the unique
solution of $M\mathbf z=0$ with $z_{0}=1$. Let $M^{(i)}$ be $M$ with column $i$ deleted. Then
$M^{(0)}$ is unit lower triangular, so $\det M^{(0)}=1$, and Cramer's rule gives
$z_{i}=(-1)^{i}\det M^{(i)}$. The row $k=s+1$ of $\bigl[\binom{d-m_{i}}{m_{k}-m_{i}}\bigr]$ is, by
the convention $m_{s+1}=d$, the row $\bigl(\binom{d-m_{i}}{d-m_{i}}\bigr)_{i}=(1,\dots,1)$; expanding
the resulting square matrix along that last row yields
$\sum_{i}(-1)^{s+i}\det M^{(i)}=(-1)^{s}\sum_{i}z_{i}$, which is the first equality of
\eqref{eq:Ddet}. The second is MacMahon's determinant for $\beta_{d}(S)$; see
\cite[Ex.~2.2.4]{Stanley2012}. The final assertion holds because $\beta_{d}(S)\ge1$: reading
$\{1,\dots,d\}$ as the concatenation of the increasing runs prescribed by $S$ produces a permutation
with descent set exactly $S$.
\end{proof}

\begin{remark}
Positivity is also visible without the combinatorial interpretation: after reversing the order of
rows and of columns, whose two signs cancel, the matrix in \eqref{eq:Ddet} is the minor of Pascal's
matrix with row set $\{0\}\cup A$ and column set $A\cup\{d\}$, where $A=\{d-m:m\in S\}$, and its
row and column sets interlace termwise.
\end{remark}

\subsection{Consequences}

\begin{corollary}[Characteristic zero]\label{cor:uni-char0}
Over a field of characteristic zero there is no uniradical Casas--Alvero polynomial, in any degree
and with any support. Consequently a counterexample to Conjecture~\ref{conj:CA} has at least three
recycled roots.
\end{corollary}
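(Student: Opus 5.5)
The plan is to combine Corollary~\ref{cor:uni-criterion} with Theorem~\ref{thm:pascal}, and then translate "uniradical" into a statement about recycled roots.

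\emph{First assertion.} Let $K$ have characteristic zero and suppose a uniradical $\CA$-polynomial of degree $d$ with support $S$ exists; we may pass to $\overline K$, since the statement only concerns roots in $\overline K$. Translating the root that realises the condition at $i=d-1$ to the origin (Lemma~\ref{lem:cnf}), we have $f(0)=0$. Corollary~\ref{cor:uni-criterion} then forces $\sum_{i=0}^{s}z_i=0$ in $K$. The $z_i$ are integers given by the recursion \eqref{eq:recursion}, and $\bZ\to K$ is injective in characteristic zero, so this sum vanishes in $\bZ$. By Theorem~\ref{thm:pascal}, however, $\sum_i z_i=(-1)^s\beta_d(S)$, and $\beta_d(S)\ge1$, which is a contradiction. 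The case $S=\emptyset$ is excluded because then $f=x^d$ is not a $\CA$-polynomial. This argument uses no property of $d$ or $S$, so it covers every degree and every support.

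\emph{Second assertion.} Let $f$ be a counterexample in characteristic zero, so $\chr K\nmid d$. Choose a minimal set $\Theta$ of roots realising all $d-1$ conditions, and suppose $\#\Theta\le2$. The condition at $i=d-1$ can only be realised by the mean $\theta_0=-a_1/d$, so $\theta_0\in\Theta$. Translate $\theta_0$ to $0$, which puts $f$ in centred normal form. By Corollary~\ref{cor:suppred}, each condition indexed by $m\in\supp(f)$ is realised only by nonzero roots. Since $S\ne\emptyset$, $\Theta$ must contain a nonzero root $\theta$, and then $\Theta=\{0,\theta\}$. Hence $\theta$ realises every condition indexed by $S$, so $f$ is uniradical, contradicting the first part. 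Therefore $\#\Theta\ge3$, meaning there are at least three recycled roots.

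\emph{Main obstacle.} The only delicate point is matching definitions: Definition~\ref{def:uni} phrases uniradicality through a single $\theta$ serving all of $\supp(f)$, whereas recycled roots are defined via minimal realising sets. The second step handles exactly this. The mean is forced into every realising set, and $0$ cannot serve any non-vacuous condition, so a realising set of size at most two is precisely a uniradical configuration. Everything else is a direct citation of Theorem~\ref{thm:pascal}.
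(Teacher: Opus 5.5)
Your proof is correct and follows the paper's argument. The first assertion is Corollary~\ref{cor:uni-criterion} combined with $\beta_d(S)\ge1$ from Theorem~\ref{thm:pascal}. The second translates a root of a realising set of size at most two to the origin and lands in the uniradical case; the paper excludes a single recycled root separately via $a_{d-i}=0$ for all $i$, whereas you fold that case in by forcing the mean into every realising set, and the two are equivalent. The translation in your first part is unnecessary, since Definition~\ref{def:uni} already includes $f(0)=0$. If $1\in S$ it moves the origin onto the other recycled root, so you would then need the observation from your second part that the translate is again uniradical, which is true.
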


\begin{proof}
The first statement is Corollary~\ref{cor:uni-criterion} together with $\beta_{d}(S)\ge1$. For the
second, a counterexample cannot have a single recycled root $\alpha$: translating $\alpha$ to the
origin, $H_{i}(f)(0)=a_{d-i}=0$ by \eqref{eq:const-term} for every $i=1,\dots,d-1$, together with
$a_{d}=f(0)=0$, forces $f=x^{d}$. Two recycled roots means, after translation, that $f$ is
uniradical, which the first statement excludes.
\end{proof}

Corollary~\ref{cor:uni-char0} recovers the bound $\mathrm{type}(f)\ge2$ of Draisma and Knopper
recorded in \cite[Prop.~7]{CLO2014}. Beyond the statement, the obstruction is exhibited as a
positive integer attached to each pair $(d,S)$, which enables the positive-characteristic corollary.

\begin{corollary}[Bad primes]\label{cor:uni-badprimes}
Let $p$ be a prime, $d\ge2$ and $S\subseteq\{1,\dots,d-1\}$. If $p\mid\beta_{d}(S)$ and $p\nmid
z_{i}$ for all $i$, then $p\in\bad(d)$, with explicit witness
$x^{d}+\sum_{i\ge1}\bar z_{i}x^{d-m_{i}}$ over $\bF_{p}$.
\end{corollary}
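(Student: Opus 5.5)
The plan is to apply Corollary~\ref{cor:uni-criterion} over $K=\overline{\bF}_p$. Its two hypotheses are that $\sum_i z_i=0$ in $K$ and that no $z_i$ vanishes in $K$. Both are read off from the integers $z_i$ defined by \eqref{eq:recursion}, reduced modulo $p$. The reduction is legitimate because the recursion has integer coefficients. Hence the $z_i$ computed in characteristic $p$ from the same recursion are exactly the images $\bar z_i$ of the integer $z_i$; the proof of Corollary~\ref{cor:uni-criterion} goes through Theorem~\ref{thm:triangular}, whose triangular system is valid over any field.

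The key steps are as follows. First, by Theorem~\ref{thm:pascal}, $\sum_i z_i=(-1)^s\beta_d(S)$ in $\bZ$. So $p\mid\beta_d(S)$ gives $\sum_i\bar z_i=0$ in $\bF_p$. Second, the hypothesis $p\nmid z_i$ gives $\bar z_i\neq0$ for every $i$. Note that $z_0=1$ is automatic, so the case $p\mid z_0$ cannot occur. Third, Corollary~\ref{cor:uni-criterion} with $\theta=1$ then produces the polynomial $f=x^d+\sum_{i\ge1}\bar z_ix^{d-m_i}$. It satisfies every condition indexed by $S$. The conditions off $S$ are vacuous by Theorem~\ref{thm:vacuity}, since $f(0)=0$ because $m_i\le d-1$. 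Moreover $f$ has the distinct roots $0$ and $1$, since $f(1)=\sum_i\bar z_i=0$. Hence $f$ is not a $d$-th power of a linear polynomial, so $f$ is a $\CA$-polynomial over $\bF_p\subseteq\overline{\bF}_p$. Therefore $\CA_{d,p}$ fails and $p\in\bad(d)$.

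The only point needing care is the definition of $\CA$-polynomial in Definition~\ref{def:CA}. It requires $\gcd(f,H_i(f))\neq1$ in $\overline K[x]$, which is witnessed directly by common roots, either $0$ or $1$. Nothing in the argument needs $\chr K\nmid d$, since the centred normal form is never invoked: the statement only uses $f(0)=0$ and the support. So the case $p\mid d$ causes no difficulty. The main (mild) obstacle is checking that the witness has support exactly $S$ and degree exactly $d$. The degree is fine because $f$ is monic. The support is exactly $S$ because each $\bar z_i\neq0$, and this is what keeps the off-support conditions vacuous rather than accidentally altering which conditions survive.
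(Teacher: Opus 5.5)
Your proof is correct and is essentially the paper's argument: the corollary follows by applying Corollary~\ref{cor:uni-criterion} over $\overline{\bF}_p$ with $\theta=1$, where $\sum_i\bar z_i=0$ comes from the identity $(-1)^s\sum_i z_i=\beta_d(S)$ of Theorem~\ref{thm:pascal} and the nonvanishing of each $\bar z_i$ is the hypothesis $p\nmid z_i$. Your remarks that the integer recursion commutes with reduction mod $p$ and that no hypothesis $p\nmid d$ is needed are precisely what justify this specialisation.
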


The bad primes produced by uniradical polynomials are therefore primes occurring in the
factorisations of descent-set counts. The two extreme supports are classical:
\[
  \beta_{d}(\{m\})=\binom{d}{m}-1,\qquad \beta_{d}(\{1,\dots,d-1\})=1 ,
\]
the first because a permutation with a single descent at $m$ is a nontrivial shuffle of two
increasing runs, the second because only $d\,(d-1)\cdots1$ is strictly decreasing; correspondingly
$z_{i}=(-1)^{i}\binom{d}{i}$ for the full support, equivalently
$\sum_{i<d}z_{i}x^{i}(1+x)^{d-i}=1-(-x)^{d}$. The case $\#S=1$ of
Corollary~\ref{cor:uni-badprimes} is therefore the criterion of \cite[Cor.~8]{SchaubSpivakovsky2024},
whose content is now visible as the statement that a nontrivial shuffle exists; the
$\CA$-polynomials in question are exactly $x^{d-m}(x^{m}+c)$, $c\in K^{\times}$, since for
$\#S=1$ the recursion \eqref{eq:recursion} gives $z_{1}=-\binom{d}{m}$. Taking $d=p+1$ and $m=1$
recovers the classical example $x^{p+1}-x^{p}$ over $\overline\bF_{p}$, and $d=6$, $m=4$, $p=2$
gives $x^{2}(x+1)^{4}$, a case in which $p\mid d$. The case $\#S=2$ returns the identity
$\binom{d}{m_{2}}\bigl(\binom{m_{2}}{m_{1}}-1\bigr)=\binom{d}{m_{1}}-1$, which is the uniradical
branch of Theorem~\ref{thm:twoterm} below. Theorem~\ref{thm:pascal} treats all $\#S$ at once.

\begin{proposition}\label{prop:symmetries}
$\beta_{d}(S)=\beta_{d}\bigl(\{1,\dots,d-1\}\setminus S\bigr)=\beta_{d}(d-S)$, where
$d-S=\{d-m:m\in S\}$.
\end{proposition}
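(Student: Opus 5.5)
The plan is to prove both equalities by exhibiting bijections on $\mathfrak S_{d}$ that transport descent sets in the required way; this uses only the definition of $\beta_{d}(S)$. We avoid the determinant \eqref{eq:Ddet} and the $z_{i}$ altogether. For each symmetry we write down an involution $\phi$ of $\mathfrak S_{d}$ and check that $\Des(\phi(w))$ is the transformed set. Since $\phi$ is a bijection, it then restricts to a bijection between $\{w:\Des(w)=S\}$ and $\{w:\Des(w)=S'\}$, and the counts agree.

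\emph{Complementation.} Let $\phi(w)_{i}=d+1-w_{i}$. Entries of $\{1,\dots,d\}$ are distinct, so for each $i\in\{1,\dots,d-1\}$ exactly one of $w_{i}>w_{i+1}$ and $w_{i}<w_{i+1}$ holds. The map $x\mapsto d+1-x$ reverses the order of entries. Hence $i\in\Des(\phi(w))$ if and only if $w_{i}<w_{i+1}$, that is, if and only if $i\notin\Des(w)$. This gives $\Des(\phi(w))=\{1,\dots,d-1\}\setminus\Des(w)$ and the first equality.

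\emph{Reversal.} Let $\rho(w)_{i}=w_{d+1-i}$. The position $i$ compares $\rho(w)_{i}=w_{d+1-i}$ with $\rho(w)_{i+1}=w_{d-i}$. So $i\in\Des(\rho(w))$ if and only if $w_{d-i}<w_{d-i+1}$, that is, $d-i$ is an ascent of $w$. Thus $\Des(\rho(w))=d-(\{1,\dots,d-1\}\setminus\Des(w))$. Reversal alone therefore sends $S$ to $d-S^{\mathrm c}$, not to $d-S$. To get $d-S$ we compose with complementation: $\rho\circ\phi$, i.e.\ $w_{i}\mapsto d+1-w_{d+1-i}$, has $\Des=d-\Des(w)$. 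This gives $\beta_{d}(S)=\beta_{d}(d-S)$.

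Everything here is routine. The one point needing care is this bookkeeping: plain reversal produces $d-S^{\mathrm c}$, so the second equality must use the composite of reversal and complementation (equivalently, combine the two facts just proved). As an optional consistency check, $d-S$ corresponds under the Remark after Theorem~\ref{thm:pascal} to reversing the Pascal minor, but the bijective argument is self-contained.
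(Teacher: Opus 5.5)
Your proof is correct and is essentially the paper's argument. Complementation $w_i\mapsto d+1-w_i$ complements the descent set, and reversal sends $\Des(w)$ to $d-\bigl(\{1,\dots,d-1\}\setminus\Des(w)\bigr)$, which is the paper's $\{1,\dots,d-1\}\setminus(d-\Des(w))$. The composite of the two therefore realises $S\mapsto d-S$.
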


\begin{proof}
Complementation $w\mapsto w^{\mathrm c}$, $w^{\mathrm c}_{i}=d+1-w_{i}$, is an involution of
the symmetric group $\mathfrak S_{d}$ carrying $\Des(w)$ to its complement in $\{1,\dots,d-1\}$; reversal
$w\mapsto w^{\mathrm r}$, $w^{\mathrm r}_{i}=w_{d+1-i}$, carries $\Des(w)$ to
$\{1,\dots,d-1\}\setminus(d-\Des(w))$. Composing the two gives $S\mapsto d-S$.
\end{proof}

Let $A_{d}$ denote the $d$-th Euler zigzag number, the number of alternating permutations of
$\{1,\dots,d\}$, so $A_{1},A_{2},\dots=1,1,2,5,16,61,272,1385,7936,50521,353792,\dots$

\begin{proposition}\label{prop:zigzag}
For every $d\ge2$ the two supports $S_{\mathrm{ev}}=\{2,4,6,\dots\}\cap\{1,\dots,d-1\}$ and
$S_{\mathrm{odd}}=\{1,3,5,\dots\}\cap\{1,\dots,d-1\}$ satisfy
$\beta_{d}(S_{\mathrm{ev}})=\beta_{d}(S_{\mathrm{odd}})=A_{d}$, and this is the strict maximum of
$\beta_{d}(S)$ over all $S\subseteq\{1,\dots,d-1\}$. Consequently every prime $p$ dividing $A_{d}$
for which $p\nmid z_{i}$ for all $i$, for one of these two supports, is a bad prime for degree $d$.
\end{proposition}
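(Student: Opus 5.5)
The proof splits into three parts: the equalities, the maximality, and the bad-prime consequence.

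\emph{Equalities.} A permutation with descent set exactly $S_{\mathrm{odd}}$ satisfies $w_1>w_2<w_3>\cdots$, and one with descent set $S_{\mathrm{ev}}$ satisfies $w_1<w_2>w_3<\cdots$. These are the two kinds of alternating (zigzag) permutations. The complementation $w\mapsto w^{\mathrm c}$ of Proposition~\ref{prop:symmetries} exchanges them, since $S_{\mathrm{ev}}$ and $S_{\mathrm{odd}}$ are complements of one another in $\{1,\dots,d-1\}$. So both counts equal $A_d$, with $A_d$ normalised as the number of up-down permutations. I will check this normalisation against the listed values: the $d=3$ permutations with descent set $\{2\}$ are $132$ and $231$, giving $A_3=2$.

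\emph{Maximality.} I would quote the theorem of Niven \cite{Niven1968} and de Bruijn \cite{deBruijn1970}: $\beta_d(S)\le A_d$ for every $S$, with equality only for $S_{\mathrm{ev}}$ and $S_{\mathrm{odd}}$. If a self-contained argument is wanted, I would follow de Bruijn's idea. Suppose $S$ is not alternating, so some $i$ has $i,i+1$ both in $S$ or both outside $S$. Toggling membership of a suitable element near that position strictly increases $\beta_d$. To show this, I would compare the two counts via the inclusion--exclusion formula $\beta_d(S)=\sum_{T\subseteq S}(-1)^{|S-T|}\alpha_d(T)$, with $\alpha_d$ a multinomial coefficient, or via an explicit injection of permutations that changes one adjacent relation. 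The strictness of the increase is the delicate point, so I expect to rely on the citation rather than reprove it. One case needs separate care: for $d=2$ the two supports are $\emptyset$ and $\{1\}$, which are the only subsets and both have count $1=A_2$. Here ``strict maximum'' means that no other subset attains the value, and this holds vacuously.

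\emph{Bad primes.} The last sentence follows directly from Corollary~\ref{cor:uni-badprimes} applied with $S=S_{\mathrm{ev}}$ or $S=S_{\mathrm{odd}}$, because Theorem~\ref{thm:pascal} gives $\beta_d(S)=\pm\sum_i z_i$ and this equals $A_d$. A prime $p\mid A_d$ with $p\nmid z_i$ for all $i$ therefore makes $\sum z_i=0$ and every $z_i\neq0$ in $\bF_p$. The corollary then yields the witness polynomial, so $p\in\bad(d)$.

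The main obstacle is the strict maximality. Everything else is formal.
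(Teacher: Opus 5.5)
Your argument is correct and is the paper's: identifying the permutations with descent set $S_{\mathrm{ev}}$ or $S_{\mathrm{odd}}$ as the alternating ones gives the equalities, the Niven--de Bruijn theorem gives strict maximality, and Corollary~\ref{cor:uni-badprimes} together with Theorem~\ref{thm:pascal} gives the bad primes. Do keep to the citation for maximality, because the single-toggle monotonicity you offer as a self-contained route is false: for $d=5$ the non-alternating set $\{1,4\}$ has $\beta_{5}(\{1,4\})=11<16=A_{5}$, yet its four single-toggle neighbours $\{1\},\{4\},\{1,2,4\},\{1,3,4\}$ have $\beta_{5}$ equal to $4,4,9,9$.
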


\begin{proof}
Permutations with descent set $S_{\mathrm{ev}}$ or $S_{\mathrm{odd}}$ are exactly the alternating
ones, whence the first assertion; the maximality is the theorem of Niven \cite{Niven1968} and de
Bruijn \cite{deBruijn1970}, see also \cite[Cor.~6.5]{Stanley2010}. The last assertion is
Corollary~\ref{cor:uni-badprimes}.
\end{proof}

The arithmetic of the zigzag numbers is thereby transported into the arithmetic of bad primes. For
instance $A_{11}=353792=2^{9}\cdot691$, and $691\nmid z_{i}$, so the irregular prime $691$ is a bad
prime for degree $11$, with witness
\[
  x^{11}+636x^{9}+268x^{7}+149x^{5}+495x^{3}+524x\ \in\ \bF_{691}[x];
\]
likewise $A_{10}=50521=19\cdot 2659$ gives $19,2659\in\bad(10)$. When one of the two extremal
supports is blocked by the condition $p\nmid z_{i}$, the other may still serve; among $d\le14$ this
occurs for $(d,p)=(7,2)$, $(8,5)$ and $(13,43)$.

\begin{table}[t]
\centering\small
\caption{Bad primes produced by Corollary~\ref{cor:uni-badprimes}: the primes $p$ for which some
$S\subseteq\{1,\dots,d-1\}$ has $p\mid\beta_{d}(S)$ and $p\nmid z_{i}$ for all $i$. No restriction
is placed on $\#S$, on $\min S$, or on $\gcd(p,d)$.}
\label{tab:unibad}
\begin{tabular}{@{}c c p{9.3cm}@{}}
\toprule
$d$ & count & primes\\
\midrule
3 & 1 & 2 \\
4 & 2 & 3, 5 \\
5 & 3 & 2, 3, 11 \\
6 & 6 & 2, 5, 7, 13, 19, 61 \\
7 & 11 & 2, 3, 5, 11, 13, 17, 29, 31, 37, 71, 181 \\
8 & 21 & 3, 5, 7, 11, 17, 19, 23, 29, 31, 37, 41, 53, 59, 73, 113, 131, 181, 191, 277, 449, 643 \\
9 & 41 & 2, 5, 7, 11, 13, 17, 19, 23, 29, 31, 37, 41, 43, 47, 59, 67, 73, 79, \dots\\
10 & 81 & 2, 3, 7, 11, 13, 17, 19, 23, 29, 31, 37, 41, 43, 47, 61, 71, 73, 79, \dots\\
11 & 180 & 2, 3, 5, 7, 13, 17, 19, 23, 29, 31, 37, 41, 43, 47, 53, 59, 61, 67, \dots\\
12 & 332 & 2, 3, 5, 7, 11, 13, 17, 19, 23, 29, 31, 37, 41, 43, 47, 53, 59, 61, \dots\\
\bottomrule
\end{tabular}
\end{table}

For $d=3$ the table returns $\bad(3)=\{2\}$ exactly, and for $d\le8$ every entry lies in the sets of
\cite{CLO2014} wherever those are available. In degree $7$ the primes $37$ and $181$ are detected by
no criterion in the literature, nor by Theorem~\ref{thm:twoterm} below; they come from the symmetry orbits
of $\{1,2,5\}$ and of $\{1,3,6\}$ under Proposition~\ref{prop:symmetries}, with
$\beta_{7}(\{1,2,5\})=111=3\cdot37$ and $\beta_{7}(\{1,3,6\})=181$, and the witnesses
\[
  x^{7}+30x^{6}+21x^{5}+22x^{2}\in\bF_{37}[x],\qquad
  x^{7}+174x^{6}+70x^{4}+117x\in\bF_{181}[x]
\]
were checked against all six Casas--Alvero conditions directly.

\section{Two-element supports}\label{sec:sparse}

Theorem~\ref{thm:pascal} disposes of every support, but only for the scenario in which one root
realises all the conditions. The first scenario it does not reach is that of two roots, and the
smallest support on which that scenario occurs has two elements. This case is settled here, in
every characteristic; in characteristic zero the criterion obtained is an inequality between
binomial coefficients, whose proof occupies \S\ref{sec:char0}.

Fix $1\le m_{1}<m_{2}\le d-1$ and set
\[
  C_{1}=\binom{d}{m_{1}},\qquad C_{2}=\binom{d}{m_{2}},\qquad E=\binom{d-m_{1}}{m_{2}-m_{1}},\qquad
  \Lambda=\binom{m_{2}}{m_{1}},
\]
\[
  g=\gcd(m_{1},m_{2}),\qquad v=m_{1}/g,\qquad u=(m_{2}-m_{1})/g,
\]
as in \eqref{eq:CEK-intro}. Throughout, $K=\overline{K}$ and $f$ is monic of degree $d$ with
$f(0)=0$; no hypothesis on $\chr K$ is imposed. We write $\bar{c}$ for the image in $K$ of an integer $c$ and drop the bar
when no confusion is possible. When $\chr K\nmid d$ and $f$ is in centred normal form one has in
addition $1\notin\supp(f)$, the situation relevant to characteristic zero.

\subsection{The criterion}
Fix $1\le m_{1}<m_{2}\le d-1$ and keep the notation
\eqref{eq:CEK-intro}. The Vandermonde-type identity
\begin{equation}\label{eq:C1E}
  C_{1}E=\binom{d}{m_{1}}\binom{d-m_{1}}{m_{2}-m_{1}}=\binom{d}{m_{2}}\binom{m_{2}}{m_{1}}=C_{2}\Lambda ,
\end{equation}
both sides being $\frac{d!}{m_{1}!\,(m_{2}-m_{1})!\,(d-m_{2})!}$.

\begin{lemma}\label{lem:powers}
Let $K=\overline K$, let $n_{1},n_{2}\ge1$ be integers, put $g_{0}=\gcd(n_{1},n_{2})$,
$v'=n_{1}/g_{0}$, $u'=n_{2}/g_{0}$, and let $\eta_{1},\eta_{2}\in K^{\times}$. There exists
$\gamma\in K^{\times}$ with $\gamma^{n_{1}}=\eta_{1}$ and $\gamma^{n_{2}}=\eta_{2}$ if and only if
$\eta_{1}^{u'}=\eta_{2}^{v'}$. Applied with $n_{1}=m_{1}$ and $n_{2}=m_{2}-m_{1}$, for which
$g_{0}=\gcd(m_{1},m_{2})=g$, this gives $v'=v$ and $u'=u$.
\end{lemma}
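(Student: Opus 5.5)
Necessity is immediate. If $\gamma^{n_1}=\eta_1$ and $\gamma^{n_2}=\eta_2$, then
\[
  \eta_1^{u'}=\gamma^{n_1u'}=\gamma^{g_0v'u'}=\gamma^{n_2v'}=\eta_2^{v'} .
\]
For sufficiency I will use B\'ezout. Since $\gcd(u',v')=1$, I choose integers $x,y$ with $xv'+yu'=1$, and set $\delta=\eta_1^{x}\eta_2^{y}\in K^{\times}$. The idea is that $\delta$ should be the value of $\gamma^{g_0}$.

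To check this, compute $\delta^{v'}$:
\[
  \delta^{v'}=\eta_1^{xv'}\eta_2^{yv'}=\eta_1^{xv'}\eta_1^{yu'}=\eta_1 ,
\]
where the middle step replaces $\eta_2^{v'}$ by $\eta_1^{u'}$ using the hypothesis. In the same way,
\[
  \delta^{u'}=\eta_1^{xu'}\eta_2^{yu'}=\eta_2^{xv'}\eta_2^{yu'}=\eta_2 .
\]
Since $K=\overline K$, there is $\gamma\in K^{\times}$ with $\gamma^{g_0}=\delta$. Then $\gamma^{n_1}=\delta^{v'}=\eta_1$ and $\gamma^{n_2}=\delta^{u'}=\eta_2$.

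This argument works in every characteristic, because extracting a $g_0$-th root only needs algebraic closedness, not separability. Negative exponents cause no trouble because every $\eta_i$ is a unit.

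For the final sentence of the lemma, note that
\[
  \gcd(m_1,m_2-m_1)=\gcd(m_1,m_2)=g .
\]
Hence $v'=m_1/g=v$ and $u'=(m_2-m_1)/g=u$.

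There is no real obstacle here. The only point needing care is the B\'ezout step, namely checking that $\delta^{v'}$ and $\delta^{u'}$ come out as $\eta_1$ and $\eta_2$, which uses the hypothesis exactly once in each computation.
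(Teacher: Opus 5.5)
Your proof is correct and follows the paper's argument essentially verbatim: necessity by direct computation, then sufficiency via a B\'ezout relation $xv'+yu'=1$, the element $\delta=\eta_1^{x}\eta_2^{y}$ with $\delta^{v'}=\eta_1$ and $\delta^{u'}=\eta_2$, and a $g_0$-th root of $\delta$ in $K=\overline K$. Your identification $\gcd(m_1,m_2-m_1)=\gcd(m_1,m_2)=g$ is exactly what the final sentence of the lemma requires.
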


\begin{proof}
Necessity: $\eta_{1}^{u'}=\gamma^{n_{1}u'}=\gamma^{n_{1}n_{2}/g}=\gamma^{n_{2}v'}=\eta_{2}^{v'}$.
Sufficiency: choose $\kappa,\lambda\in\bZ$ with $\kappa u'+\lambda v'=1$ (possible as $\gcd(u',v')=1$) and set
$\delta=\eta_{1}^{\lambda}\eta_{2}^{\kappa}$. Then
$\delta^{v'}=\eta_{1}^{\lambda v'}(\eta_{2}^{v'})^{\kappa}=\eta_{1}^{\lambda v'}(\eta_{1}^{u'})^{\kappa}=\eta_{1}$ and likewise
$\delta^{u'}=\eta_{2}$. As $K$ is algebraically closed, choose $\gamma$ with $\gamma^{g_{0}}=\delta$; then
$\gamma^{n_{1}}=\delta^{v'}=\eta_{1}$ and $\gamma^{n_{2}}=\delta^{u'}=\eta_{2}$.
\end{proof}

\begin{theorem}\label{thm:twoterm}
Let $K=\overline K$ and let $1\le m_{1}<m_{2}\le d-1$. Set $g=\gcd(m_1,m_2)$,
$v=m_{1}/g$, $u=(m_{2}-m_{1})/g$, and
\[
  L=\frac{C_{2}-1}{C_{1}(E-1)},\qquad R=\frac{C_{1}(C_{2}-E)}{(C_{1}-1)(C_{2}-1)}
\]
(as elements of $K$, whenever the denominators are nonzero). Then there is a monic $f$ of degree
$d$ over $K$ with $f(0)=0$, $\supp(f)=\{m_{1},m_{2}\}$ and $\gcd(f,H_{i}(f))\neq1$ for all $i$
if and only if $C_{1}\notin\{0,1\}$ and either
\begin{enumerate}
\item[(i)] $E=1$ and $C_{2}=1$ in $K$, or
\item[(ii)] $C_{2}\neq1$, $E\neq1$, $E\neq C_{2}$ in $K$, and $L^{u}=R^{v}$.
\end{enumerate}
\end{theorem}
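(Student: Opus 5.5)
The plan is to write the three-term polynomial out explicitly and solve the two surviving conditions by hand; this is Theorem~\ref{thm:triangular} with $t\le 2$, but done directly. Put $f=x^{d}+a_{1}x^{d-m_{1}}+a_{2}x^{d-m_{2}}$ with $a_{1}=a_{m_1}$, $a_{2}=a_{m_2}$ both in $K^{\times}$. Its nonzero roots are the roots of $\varphi=x^{m_2}+a_{1}x^{m_2-m_1}+a_{2}$. By Corollary~\ref{cor:suppred} (which does not use $\chr K$), the hypothesis on $f$ is equivalent to two conditions. First, some nonzero root $\theta$ of $\varphi$ kills
\[
G_{m_1}=C_{1}x^{m_1}+a_{1}.
\]
Second, some nonzero root $\phi$ of $\varphi$ kills
\[
G_{m_2}=C_{2}x^{m_2}+E\,a_{1}x^{m_2-m_1}+a_{2}.
\]
Both formulas come from \eqref{eq:Gm}, since $\binom{d-m_1}{m_2-m_1}=E$.

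For the first condition: if $C_{1}=0$ then $G_{m_1}=a_{1}$ is a nonzero constant, so the condition fails. Otherwise it forces $a_{1}=-C_{1}\theta^{m_1}$. Substituting into $\varphi(\theta)=0$ gives $a_{2}=(C_{1}-1)\theta^{m_2}$, and $a_2\neq 0$ forces $C_{1}\neq1$. Hence $C_{1}\notin\{0,1\}$ is necessary. Conversely, for any $\theta\in K^\times$ these formulas produce a valid $f$ with support $\{m_1,m_2\}$ satisfying the first condition.

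For the second condition, take linear combinations of $\varphi(\phi)=0$ and $G_{m_2}(\phi)=0$; this system is equivalent to the pair
\[
(C_{2}-1)\phi^{m_2}+(E-1)a_{1}\phi^{m_2-m_1}=0,\qquad (E-C_{2})\phi^{m_2}+(E-1)a_{2}=0 .
\]
Since $\phi\neq0$, dividing the first by $\phi^{m_2-m_1}$ and inserting the values of $a_1,a_2$ gives
\[
(C_{2}-1)\phi^{m_1}=C_{1}(E-1)\theta^{m_1},\qquad (C_{2}-E)\phi^{m_2}=(E-1)(C_{1}-1)\theta^{m_2}.
\]
We then split on whether $E=1$.

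If $E=1$, the first equation forces $C_{2}=1$. Conversely, if $E=C_{2}=1$ then $G_{m_2}=\varphi$, so $\phi=\theta$ works. This is case (i).

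If $E\neq1$, the right-hand sides are nonzero, because $C_{1}\notin\{0,1\}$. Hence $C_{2}\neq1$ and $C_{2}\neq E$ are forced. Setting $\gamma=\phi/\theta$, the two equations become
\[
\gamma^{m_1}=L^{-1},\qquad \gamma^{m_2}=\frac{(E-1)(C_{1}-1)}{C_{2}-E},
\]
and dividing the second by the first gives $\gamma^{m_2-m_1}=R^{-1}$. Lemma~\ref{lem:powers}, applied with $n_{1}=m_{1}$, $n_{2}=m_{2}-m_{1}$, $\eta_1=L^{-1}$, $\eta_2=R^{-1}$, says such a $\gamma\in K^\times$ exists if and only if $L^{-u}=R^{-v}$, that is, $L^{u}=R^{v}$. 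This is case (ii).

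For the converse in case (ii), fix the $\gamma$ just produced and take $\theta=1$, $a_{1}=-C_{1}$, $a_{2}=C_{1}-1$, $\phi=\gamma$. The displayed equalities then hold, and since the linear combinations above are invertible (the case $E\neq1$ lets one recover both original equations), they give $\varphi(\phi)=G_{m_2}(\phi)=0$. Note that $\gamma=1$, i.e.\ $\phi=\theta$, is permitted.

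The main point needing care is the bookkeeping of degeneracies rather than any hard estimate. One must verify that every vanishing of $C_1$, $C_1-1$, $C_2-1$, $E-1$ or $C_2-E$ is accounted for by exactly the exclusions in (i) and (ii), in arbitrary characteristic. One must also make sure the elimination step is reversible, which uses $E\neq1$ in case (ii) and $G_{m_2}=\varphi$ in case (i).
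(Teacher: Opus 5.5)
Your proof is correct, but it organises the elimination differently from the paper.

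The paper keeps the coefficients $(a,b)$ as unknowns to the end. It applies Lemma~\ref{lem:powers} separately to each condition, obtaining the two solvability criteria \eqref{eq:condA} and \eqref{eq:condB}, each of the shape $c\,a^{u}=c'(b/a)^{v}$. It then removes $(a,b)$ by dividing one criterion by the other, and it makes the case split on $E=C_2$ rather than on $E=1$.

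You eliminate the coefficients at the outset, in the spirit of Theorem~\ref{thm:triangular}. The condition at $m_1$ together with $\varphi(\theta)=0$ forces $(a_1,a_2)=(-C_1\theta^{m_1},(C_1-1)\theta^{m_2})$. Every candidate is therefore $\theta^{d}F(x/\theta)$ with $F=x^{d}-C_1x^{d-m_1}+(C_1-1)x^{d-m_2}$. The condition at $m_2$ then becomes two pure-power equations for the ratio $\gamma=\phi/\theta$, with right-hand sides depending only on $d,m_1,m_2$, and Lemma~\ref{lem:powers} is used just once.

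Your route is more economical, and it gives a sharper conclusion for free. In every characteristic, the set of polynomials in question is either empty or exactly the $\mathbb G_m$-orbit of the single explicit polynomial $F$. The converse is a concrete witness ($\theta=1$) rather than a solution of \eqref{eq:condA}. The paper's route treats the two conditions symmetrically: it describes the zero locus of each $\mathcal R_{m}$ inside the two-dimensional torus separately, in line with Corollary~\ref{cor:strat}.

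One wording point. The passage from $\bigl(\varphi(\phi),G_{m_2}(\phi)\bigr)$ to your pair of linear combinations has determinant $1-E$, so the two systems are equivalent only when $E\neq1$. Before the case split you use only the forward implication, and you treat $E=1$ directly via $G_{m_2}=\varphi$, so this is not a gap. Still, ``equivalent'' should be asserted only in case~(ii).
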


\begin{proof}
Write $f=x^{d}+a\,x^{d-m_{1}}+b\,x^{d-m_{2}}$ with $a=a_{m_{1}}\neq0$, $b=a_{m_{2}}\neq0$, and
$f=x^{d-m_{2}}\,\varphi(x)$ with
\[
  \varphi(x)=x^{m_{2}}+a\,x^{m_{2}-m_{1}}+b .
\]
By Corollary~\ref{cor:suppred}, $f$ is a $\CA$-polynomial precisely when the two conditions at
$i=d-m_{1}$ and $i=d-m_{2}$ hold, and the relevant common roots are nonzero, hence are roots of
$\varphi$. By \eqref{eq:Gm},
\[
  G_{m_{1}}=C_{1}x^{m_{1}}+a,\qquad G_{m_{2}}=C_{2}x^{m_{2}}+E\,a\,x^{m_{2}-m_{1}}+b .
\]

\smallskip
\emph{The condition at $m_{1}$.} We need $\alpha$ with $\varphi(\alpha)=0=G_{m_{1}}(\alpha)$. If
$C_{1}=0$ then $G_{m_{1}}=a\neq0$ and no such $\alpha$ exists; so $C_{1}\neq0$ and
\begin{equation}\label{eq:alpha1}
  \alpha^{m_{1}}=-a/C_{1}\ \in K^\times .
\end{equation}
Since $\alpha\neq0$, $\varphi(\alpha)=0$ reads
$\alpha^{m_{2}-m_{1}}\bigl(\alpha^{m_{1}}+a\bigr)+b=0$, i.e.
$\alpha^{m_{2}-m_{1}}\,a\,(C_{1}-1)/C_{1}=-b$. If $C_{1}=1$ this forces $b=0$, excluded; hence
$C_{1}\neq1$ and
\begin{equation}\label{eq:alpha2}
  \alpha^{m_{2}-m_{1}}=\frac{-b\,C_{1}}{a\,(C_{1}-1)}\ \in K^\times .
\end{equation}
Conversely, if $\alpha\in K^{\times}$ satisfies \eqref{eq:alpha1} and \eqref{eq:alpha2} then
$G_{m_1}(\alpha)=0$ and, retracing the computation, $\varphi(\alpha)=0$. By
Lemma~\ref{lem:powers} such an
$\alpha$ exists if and only if
\begin{equation}\label{eq:condA}
  \Bigl(\frac{-a}{C_{1}}\Bigr)^{u}=\Bigl(\frac{-b\,C_{1}}{a(C_{1}-1)}\Bigr)^{v}.
\end{equation}

\smallskip
\emph{The condition at $m_{2}$.} We need $\gamma\neq0$ with $\varphi(\gamma)=0=G_{m_{2}}(\gamma)$.
Substituting $\gamma^{m_{2}}=-a\gamma^{m_{2}-m_{1}}-b$ into $G_{m_{2}}(\gamma)=0$ gives
\begin{equation}\label{eq:gamma-lin}
  a\,\gamma^{m_{2}-m_{1}}\,(E-C_{2})+b\,(1-C_{2})=0 .
\end{equation}
If $E=C_{2}$ then \eqref{eq:gamma-lin} forces $b(1-C_{2})=0$, i.e.\ $C_{2}=1$; in that case
$E=C_{2}=1$ and \eqref{eq:gamma-lin} is vacuous, so \emph{any} root $\gamma$ of $\varphi$ does, and
$\varphi$ has a root. This is case (i): the condition at $m_{2}$ is automatic and the only
constraint left is \eqref{eq:condA}, which is of the form $a^{u+v}=\kappa\,b^{v}$ with
$\kappa\in K^{\times}$ and is solvable with $a,b\in K^{\times}$ (choose $b$, extract a root). Hence
such an $f$ exists. If $E=C_{2}\neq1$, no $\gamma$ exists and no such $f$ exists.

Assume now $E\neq C_{2}$. Then \eqref{eq:gamma-lin} gives
\begin{equation}\label{eq:gamma2}
  \gamma^{m_{2}-m_{1}}=\frac{b\,(C_{2}-1)}{a\,(E-C_{2})} ,
\end{equation}
which must be nonzero, so $C_{2}\neq1$. Using $\gamma^{m_{2}}=-a\gamma^{m_{2}-m_{1}}-b$ we obtain
$\gamma^{m_{2}}=b(1-E)/(E-C_{2})$ and therefore
\begin{equation}\label{eq:gamma1}
  \gamma^{m_{1}}=\frac{\gamma^{m_{2}}}{\gamma^{m_{2}-m_{1}}}=\frac{a\,(1-E)}{C_{2}-1},
\end{equation}
which must be nonzero, so $E\neq1$. Conversely, if $\gamma\in K^{\times}$ satisfies
\eqref{eq:gamma1} and \eqref{eq:gamma2} then
\[
  \varphi(\gamma)=\gamma^{m_{1}}\gamma^{m_{2}-m_{1}}+a\gamma^{m_{2}-m_{1}}+b
  =\frac{b(1-E)}{E-C_{2}}+\frac{b(C_{2}-1)}{E-C_{2}}+b=-b+b=0
\]
and $G_{m_{2}}(\gamma)=0$ by \eqref{eq:gamma-lin}. By Lemma~\ref{lem:powers} such a $\gamma$ exists
if and only if
\begin{equation}\label{eq:condB}
  \Bigl(\frac{a(1-E)}{C_{2}-1}\Bigr)^{u}=\Bigl(\frac{b(C_{2}-1)}{a(E-C_{2})}\Bigr)^{v}.
\end{equation}

\smallskip
\emph{Elimination of $(a,b)$.} Both \eqref{eq:condA} and \eqref{eq:condB} have the shape
$c\,a^{u}=c'\,(b/a)^{v}$ with $c,c'\in K^{\times}$ depending only on $d,m_{1},m_{2}$. Dividing
\eqref{eq:condA} by \eqref{eq:condB} eliminates $a$ and $b$ and yields
\[
  \Bigl(\frac{-1/C_{1}}{(1-E)/(C_{2}-1)}\Bigr)^{u}
  =\Bigl(\frac{-C_{1}/(C_{1}-1)}{(C_{2}-1)/(E-C_{2})}\Bigr)^{v},
  \qquad\text{i.e.}\qquad L^{u}=R^{v}.
\]
Conversely, if $L^{u}=R^{v}$ then \eqref{eq:condA} and \eqref{eq:condB} are equivalent, and
\eqref{eq:condA} is solvable with $a,b\in K^{\times}$ as above. This proves (ii).
\end{proof}

\subsection{Characteristic zero}\label{sec:char0}

The criterion of Theorem~\ref{thm:twoterm} involves binomial coefficients alone, so whether it can
be satisfied in characteristic zero is a question about integers. It cannot. The proof reduces the
required inequality to the assertion that an explicit quantity $\Theta(d,m,r)$ is at least $1$,
establishes this for all $d\ge14$, and disposes of the fifteen triples with $d\le13$ at which it
fails by direct computation. All quantities below are rational numbers. We keep the notation \eqref{eq:CEK-intro} and
introduce the complementary parameter
\[
  r\;=\;d-m_{2}\in[1,d-m_{1}-1] ,\qquad\text{so that}\qquad
  C_{2}=\binom{d}{r},\quad E=\binom{d-m_{1}}{r},\quad \Lambda=\binom{d-r}{m_{1}} .
\]
(That $\binom{d}{m_2}=\binom{d}{r}$ and $\binom{d-m_1}{m_2-m_1}=\binom{d-m_1}{r}$ is immediate.)
Throughout we write $m=m_{1}$ and
\[
  B=\binom{d}{m}=C_{1},\qquad \mu=\frac{\Lambda}{B}=\frac{E}{C_{2}}\in(0,1),
\]
the two expressions for $\mu$ agreeing by \eqref{eq:C1E}. Finally set
\begin{equation}\label{eq:Theta}
  \Theta(d,m,r)\;=\;\Lambda\cdot\frac{E-1}{E}\cdot(1-\mu)^{m}
  \;=\;\frac{C_{1}(E-1)(C_{2}-E)^{m}}{C_{2}^{\,m+1}} ,
\end{equation}
the second equality following from $\Lambda=\mu C_{1}$ and $\mu=E/C_{2}$.

\subsubsection*{Elementary estimates}

The following elementary principle is used twice. If $h:[0,1]\to\mathbb R$ is nondecreasing on
$[0,c]$ and nonincreasing on $[c,1]$, then for every subinterval $[a,b]\subseteq[0,1]$,
\begin{equation}\label{eq:endpoint}
  \min_{t\in[a,b]}h(t)=\min\{h(a),h(b)\} .
\end{equation}
Indeed, for $t\in[a,b]$ with $t\le c$ one has $h(t)\ge h(a)$, and for $t\ge c$ one has
$h(t)\ge h(b)$; the two cases exhaust $[a,b]$. The hypothesis is satisfied by
$h_{m}(t)=t(1-t)^{m}$ with $c=1/(m+1)$, since $g_{m}'(t)=(1-t)^{m-1}\bigl(1-(m+1)t\bigr)$.

\begin{lemma}\label{lem:basic}
Let $d\ge4$ and $2\le m_{1}<m_{2}\le d-1$. Then
\[
 \Lambda\ge m_{2}\ge3,\qquad 2\le E<C_{2},\qquad C_{1}\ge\binom{d}{2}>1,\qquad
 E\ge d-m_{1},\qquad \Lambda\ge d-r,
\]
and $0<L<1$, $0<R<1$.
\end{lemma}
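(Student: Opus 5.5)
The inequalities in Lemma~\ref{lem:basic} are elementary facts about binomial coefficients, so I would prove them one at a time in the order listed, and then derive the bounds on $L$ and $R$ from them.

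\emph{The bounds on $\Lambda$.} Since $2\le m_1<m_2$, we have $1\le m_1\le m_2-1$. Hence $\Lambda=\binom{m_2}{m_1}\ge\binom{m_2}{1}=m_2$, and $m_2\ge 3$ because $m_2>m_1\ge2$. For the last inequality, $\Lambda=\binom{d-r}{m_1}$ with $1\le m_1\le d-r-1$, which gives $\Lambda\ge d-r$ in the same way.

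\emph{The bounds on $E$.} We have $E=\binom{d-m_1}{m_2-m_1}$ with $1\le m_2-m_1\le d-m_1-1$. Therefore $E\ge d-m_1$, and $d-m_1\ge 2$ since $m_1\le d-2$.

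\emph{The inequality $E<C_2$.} The identity \eqref{eq:C1E} gives $E/C_2=\Lambda/C_1$. So it suffices to show $\binom{m_2}{m_1}<\binom{d}{m_1}$. This holds because $m_2<d$ and $m_1\ge1$, and $n\mapsto\binom{n}{m_1}$ is strictly increasing for $n\ge m_1$. Alternatively, one can note directly that $\binom{d-m_1}{r}<\binom{d}{r}$.

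\emph{The bound on $C_1$.} Since $2\le m_1\le d-2$, unimodality of the binomial coefficients gives $C_1\ge\binom d2\ge6>1$.

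\emph{The bounds on $L$ and $R$.} From the inequalities above, $C_1>1$, $E>1$ and $C_2>E>1$, so all factors in $L$ and $R$ are positive, and hence $L,R>0$.

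For $L<1$ we need $C_2-1<C_1(E-1)$. Using \eqref{eq:C1E}, $C_1E=C_2\Lambda$, so this becomes $C_2-1<C_2\Lambda-C_1$, that is, $C_1-1<C_2(\Lambda-1)$. I expect this to be the main obstacle, because $C_1$ may exceed $C_2$. The approach is to write $C_2(\Lambda-1)=C_1E-C_2$ and reduce back to $C_1(E-1)>C_2-1$. That in turn follows from $C_1(E-1)\ge C_1\ge\binom{d}{2}$ together with a comparison of $C_2$ against $C_1(E-1)$. To make the comparison, I would use $C_2=C_1E/\Lambda\le C_1E/3$. Then
\[C_1(E-1)-C_2\ \ge\ C_1\Bigl(E-1-\tfrac{E}{3}\Bigr)=C_1\Bigl(\tfrac{2E}{3}-1\Bigr)>0,\]
since $E\ge2$ and $\Lambda\ge3$. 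This gives $C_2<C_1(E-1)$, and so $L<1$.

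For $R<1$ we need $C_1(C_2-E)<(C_1-1)(C_2-1)$. Expanding, this is equivalent to $C_1+C_2-1<C_1E$, which is $C_1+C_2-1<C_2\Lambda$ by \eqref{eq:C1E}. Rewriting once more, it becomes $C_1-1<C_2(\Lambda-1)$, the same inequality that arose for $L<1$. But this is just $C_1(E-1)>C_2-1$ again, via $C_1E=C_2\Lambda$, which was proved in the previous step. So both bounds $L<1$ and $R<1$ follow from the single estimate $C_2\le C_1E/3$.
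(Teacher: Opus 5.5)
Your proof is correct. For the binomial bounds it follows the paper's argument, and your direct proof of $E<C_2$ via $\binom{d-m_1}{r}<\binom{d}{r}$ is a slight shortcut. The one genuine difference is at the end. You notice that $L<1$ and $R<1$ are literally the same inequality $C_1+C_2-1<C_1E$: indeed $1-L$ and $1-R$ both have numerator $C_1E-C_1-C_2+1$, over the positive denominators $C_1(E-1)$ and $(C_1-1)(C_2-1)$. A single estimate therefore settles both, namely $C_2=C_1E/\Lambda\le C_1E/3<C_1(E-1)$, which is essentially the paper's bound $\frac{\Lambda-1}{\Lambda}C_1E\ge\frac43C_1$. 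The paper does not see this coincidence and proves $R<1$ separately, by a case split on whether $C_1\ge C_2$. Your version is the more economical of the two.
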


\begin{proof}
$\Lambda=\binom{m_{2}}{m_{1}}\ge m_{2}\ge3$ because $1\le m_{1}\le m_{2}-1$; similarly
$E=\binom{d-m_{1}}{r}\ge d-m_{1}\ge2$ because $1\le r\le d-m_{1}-1$, and
$\Lambda=\binom{d-r}{m_{1}}\ge d-r$. By \eqref{eq:C1E}, $E=C_{2}\Lambda/C_{1}$ and $\Lambda<C_{1}$ (as
$\binom{m_{2}}{m_{1}}<\binom{d}{m_{1}}$ for $m_{2}<d$), whence $E<C_{2}$. Also
$C_1=\binom{d}{m_1}\ge\binom{d}{2}$ for $2\le m_1\le d-2$.

Positivity of $L$ and $R$ is now clear. For $L<1$ we must show $C_{2}-1<C_{1}(E-1)=C_{2}\Lambda-C_{1}$,
i.e.\ $C_{1}-1<C_{2}(\Lambda-1)$. Indeed, using $\Lambda\ge3$ and $E\ge2$,
\[
  C_{2}(\Lambda-1)=\tfrac{\Lambda-1}{\Lambda}\,C_{2}\Lambda=\tfrac{\Lambda-1}{\Lambda}\,C_{1}E\ \ge\ \tfrac23\cdot2C_{1}=\tfrac43C_{1}>C_{1}-1 .
\]
For $R<1$ we must show $C_{1}(C_{2}-E)<(C_{1}-1)(C_{2}-1)$, i.e.\
$C_{1}+C_{2}-1<C_{1}E=C_{2}\Lambda$. If $C_{1}\ge C_{2}$ then $C_{1}+C_{2}-1<2C_{1}\le C_{1}E$; if
$C_{1}<C_{2}$ then $C_{1}+C_{2}-1<2C_{2}<3C_{2}\le C_{2}\Lambda$.
\end{proof}

\begin{lemma}\label{lem:chain}
Let $d\ge4$ and $2\le m_{1}<m_{2}\le d-1$, and suppose $\Theta(d,m_{1},d-m_{2})\ge1$. Then
$L^{u}<R^{v}$.
\end{lemma}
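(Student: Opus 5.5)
The plan is to compare both sides of $L^{u}<R^{v}$ with a single intermediate quantity and to recognise $\Theta$ as exactly the ratio of the two resulting bounds. Lemma~\ref{lem:basic} gives $0<L<1$ and $0<R<1$, so exponents can be moved in the favourable direction. Since $u\ge1$, we have $L^{u}\le L$. Since $v=m_{1}/g\le m_{1}=m$, we have $R^{v}\ge R^{m}$. It therefore suffices to prove $L<R^{m}$.

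\emph{Upper bound for $L$.} Drop the $-1$ in the numerator, which is legitimate because $C_{1}(E-1)>0$ by Lemma~\ref{lem:basic} ($E\ge2$, $C_{1}>1$). This gives
\[
  L=\frac{C_{2}-1}{C_{1}(E-1)}<\frac{C_{2}}{C_{1}(E-1)} .
\]

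\emph{Lower bound for $R$.} Write
\[
  R=\frac{C_{2}-E}{C_{2}}\cdot\frac{C_{1}}{C_{1}-1}\cdot\frac{C_{2}}{C_{2}-1}.
\]
Both of the last two factors exceed $1$, since $C_{1},C_{2}>1$ (Lemma~\ref{lem:basic} gives $C_{2}>E\ge2$). The first factor equals $1-\mu>0$. Hence $R>1-\mu$, and so $R^{m}>(1-\mu)^{m}$.

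\emph{Combining with $\Theta$.} By the second expression in \eqref{eq:Theta},
\[
  \Theta=\frac{C_{1}(E-1)}{C_{2}}\,(1-\mu)^{m},
\]
so the hypothesis $\Theta\ge1$ says precisely that $C_{2}/\bigl(C_{1}(E-1)\bigr)\le(1-\mu)^{m}$. Chaining the inequalities gives
\[
  L^{u}\le L<\frac{C_{2}}{C_{1}(E-1)}\le(1-\mu)^{m}<R^{m}\le R^{v},
\]
and the chain is strict at the second and fourth steps.

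There is no real obstacle here; the only points needing care are sign and size checks. One must confirm that every denominator is positive and every factor lies in $(0,1)$ or exceeds $1$, as claimed, which Lemma~\ref{lem:basic} supplies. One must also use the exponent inequalities $u\ge1$ and $v\le m$ in the correct direction, which works because $L,R<1$. The substantive difficulty, verifying $\Theta\ge1$, is deferred to the subsequent analysis.
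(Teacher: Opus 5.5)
Your proof is correct and is essentially the paper's argument: you reduce to $L<R^{m_1}$ via $L^{u}\le L$ and $R^{v}\ge R^{m_1}$, read $\Theta\ge1$ as an upper bound on $L$, and bound $R$ from below by discarding factors larger than $1$. The only difference is where the $-1$ is absorbed: the paper replaces $C_2^{m_1+1}$ by $(C_2-1)^{m_1+1}$ and compares with $(C_2-E)/(C_2-1)$, whereas you drop the $-1$ from the numerator of $L$ and compare with $1-\mu=(C_2-E)/C_2$.
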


\begin{proof}
By \eqref{eq:Theta}, $\Theta\ge1$ means $C_{1}(E-1)(C_{2}-E)^{m_{1}}\ge C_{2}^{m_{1}+1}$; since
$C_{2}>C_{2}-1>0$ this gives $C_{1}(E-1)(C_{2}-E)^{m_{1}}>(C_{2}-1)^{m_{1}+1}$, that is,
\[
  L=\frac{C_{2}-1}{C_{1}(E-1)}\;<\;\Bigl(\frac{C_{2}-E}{C_{2}-1}\Bigr)^{m_{1}} .
\]
Since $R=\frac{C_{1}}{C_{1}-1}\cdot\frac{C_{2}-E}{C_{2}-1}>\frac{C_{2}-E}{C_{2}-1}>0$, we get
$L<R^{m_{1}}$. Finally $u\ge1$ and $0<L<1$ give $L^{u}\le L$, while $1\le v\le m_{1}$ and $0<R<1$
give $R^{v}\ge R^{m_{1}}$. Hence $L^{u}\le L<R^{m_{1}}\le R^{v}$.
\end{proof}

\subsubsection*{The inequality $\Theta\ge1$}

\begin{lemma}\label{lem:Xi}
For $d\ge16$ and $3\le m\le d-2$,
\[
  \Xi(d,m):=\frac{d-m}{2d}\binom{d}{m}\Bigl(\frac{m}{d}\Bigr)^{m}\;\ge\;1 .
\]
\end{lemma}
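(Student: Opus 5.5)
Write $\Xi(d,m)=\frac{d-m}{2d}\,\phi(d,m)$ with $\phi(d,m)=\binom{d}{m}(m/d)^{m}$. The plan has three steps: monotonicity in $m$ via successive ratios, a check of the two ends of the range, and the routine arithmetic.

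\emph{Monotonicity.} First compute the ratio of consecutive terms:
\[
  \frac{\phi(d,m+1)}{\phi(d,m)}=\frac{d-m}{m+1}\cdot\frac{(m+1)^{m+1}}{d\,m^{m}}=\frac{d-m}{d}\Bigl(1+\frac1m\Bigr)^{m}.
\]
Consequently
\[
  \frac{\Xi(d,m+1)}{\Xi(d,m)}=\frac{(d-m-1)}{d}\Bigl(1+\frac1m\Bigr)^{m}.
\]
As $m$ increases, $(1+1/m)^{m}$ increases while $(d-m-1)/d$ decreases linearly. I would show that $\log$ of this ratio changes sign at most once, from positive to negative. This makes $m\mapsto\Xi(d,m)$ unimodal on $[3,d-2]$.

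To get the single sign change, use that $\log(d-m-1)$ is concave in $m$ and that $m\log(1+1/m)$ is concave too; the latter holds because its second derivative is $-1/(m(m+1)^2)<0$. The sum of the two is therefore concave, so $\{m:\text{ratio}\ge1\}$ is an interval, and the ratio is $<1$ near $m=d-2$. Hence $\Xi(d,\cdot)$ is nondecreasing and then nonincreasing, and the same endpoint principle \eqref{eq:endpoint} gives $\min_{3\le m\le d-2}\Xi=\min\{\Xi(d,3),\Xi(d,d-2)\}$.

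\emph{Endpoints.} At $m=3$,
\[
  \Xi(d,3)=\frac{d-3}{2d}\cdot\frac{d(d-1)(d-2)}{6}\cdot\frac{27}{d^{3}}=\frac{9(d-1)(d-2)(d-3)}{4d^{3}}.
\]
This is increasing in $d$, since each factor $(1-k/d)$ is. At $d=16$ it equals $9\cdot15\cdot14\cdot13/(4\cdot4096)\approx1.5\ge1$.

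At $m=d-2$,
\[
  \Xi(d,d-2)=\frac{1}{d}\binom{d}{2}\Bigl(1-\frac2d\Bigr)^{d-2}=\frac{d-1}{2}\Bigl(1-\frac2d\Bigr)^{d-2}.
\]
Since $(1-2/d)^{d-2}\ge e^{-2}$, we get $\Xi(d,d-2)\ge(d-1)/(2e^{2})\ge 15/14.78>1$ for $d\ge16$. The bound $(1-2/d)^{d-2}\ge e^{-2}$ follows from $\log(1-x)\ge -x/(1-x)$ with $x=2/d$.

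\emph{Main obstacle.} The difficult step is making the unimodality rigorous, because $m$ is discrete and the concavity argument has to be phrased for the extension of $m\mapsto\log(\text{ratio})$ to real $m$. If this becomes awkward, I would fall back on a cruder route. Using $(1+1/m)^m\ge 2$ for $m\ge1$, the ratio is $\ge 2(d-m-1)/d\ge1$ whenever $m\le d/2-1$, so $\Xi$ increases up to $m\approx d/2$. Using $(1+1/m)^m<e$, the ratio is $<e(d-m-1)/d<1$ once $m\ge d(1-1/e)-1$. In the middle window I would bound $\Xi$ directly from below by $\Xi(d,\lfloor d/2\rfloor)$ or a neighbouring value. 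A central binomial estimate such as $\binom{d}{m}\ge 2^{d}/(d+1)$ near $m=d/2$ handles this for large $d$, leaving finitely many $d$ to verify by computation.
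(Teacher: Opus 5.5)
Your argument is correct once one missing check is added, and it takes a different route from the paper.

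\textbf{The missing check.} Concavity of $\ell(m)=\log(d-m-1)-\log d+m\log(1+1/m)$ on $[3,d-3]$ only tells you that $\{\ell\ge0\}$ is an interval. Along $m=3,\dots,d-3$ the sign of $\ell$ could therefore be negative, then nonnegative, then negative. Knowing that the ratio is $<1$ near $m=d-2$ does not exclude the initial negative run. If that run were present, $\Xi(d,\cdot)$ would go down, up, down, and could have an interior minimum. What you need is $\ell(3)\ge0$, i.e.\ $\frac{64}{27}\cdot\frac{d-4}{d}\ge1$. This holds for $d\ge7$ (the value is $16/9$ at $d=16$), and your own estimate $(1+1/m)^m\ge2$ also gives it for $d\ge8$. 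Then $\{\ell\ge0\}=[3,b]$, $\Xi(d,\cdot)$ is nondecreasing and then nonincreasing, and your two endpoint computations, which are correct, finish the proof. The remark about the ratio near $d-2$ and the whole fallback become unnecessary. The fallback's middle-window step would not have worked as written in any case: a single value $\Xi(d,\lfloor d/2\rfloor)$ does not bound a window on which monotonicity is unknown, and $\binom dm\ge2^d/(d+1)$ holds only for the central coefficient.

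\textbf{Comparison with the paper.} The paper fixes $m$ rather than $d$. Writing $\Xi(d,m)=\frac12(1-\frac md)\frac{m^m}{m!}\prod_{i<m}(1-\frac id)$, every factor increases with $d$, so it reduces to the least admissible $d$. For $m\ge14$ this is $d=m+2$, giving $\frac{m+1}{2}(1+\frac2m)^{-m}>\frac{m+1}{2e^2}$, which is exactly your $m=d-2$ endpoint. For $3\le m\le13$ it is $d=16$, which costs eleven numerical evaluations $\Xi(16,m)$. The paper's monotonicity step needs no calculus but pays with a table. Yours spends a second-derivative computation to get unimodality in $m$, after which the single number $\Xi(16,3)$ suffices.

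Your reduction also makes the threshold transparent. If you evaluate $\Xi(d,d-2)=\frac{d-1}{2}(1-\frac2d)^{d-2}$ exactly instead of bounding it through $e^{-2}$, the same argument gives the lemma for all $d\ge14$. The binding value is $\Xi(14,12)=\frac{13}{2}(\frac67)^{12}\approx1.022$, which is precisely the minimum $\Theta(14,12,1)$ reported in the proof of Proposition~\ref{prop:Theta}.
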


\begin{proof}
Write
\[
  \Xi(d,m)=\frac{1}{2}\Bigl(1-\frac{m}{d}\Bigr)\cdot\frac{m^{m}}{m!}\cdot\prod_{i=1}^{m-1}\Bigl(1-\frac{i}{d}\Bigr),
\]
which follows from $\binom dm (m/d)^m=\frac{m^m}{m!}\prod_{i=1}^{m-1}(1-i/d)$. For fixed $m$ every
factor $1-\tfrac md$ and $1-\tfrac id$ is strictly increasing in $d$; hence $d\mapsto\Xi(d,m)$ is
increasing on $d\ge m+2$.

If $m\ge14$, the least admissible $d$ is $m+2\ (\ge16)$ and
\[
  \Xi(m+2,m)=\frac{2}{2(m+2)}\binom{m+2}{2}\Bigl(\frac{m}{m+2}\Bigr)^{m}
  =\frac{m+1}{2}\Bigl(1+\frac2m\Bigr)^{-m}>\frac{m+1}{2e^{2}}\ \ge\ 1,
\]
because $(1+2/m)^{m}<e^{2}$ and $m+1\ge15>2e^{2}=14.778\ldots$

If $3\le m\le13$ then $d\ge16$ gives $\Xi(d,m)\ge\Xi(16,m)$, and the eleven rational numbers
$\Xi(16,m)$, $m=3,\dots,13$, are
$1.4996\ldots$, $2.666\ldots$, $4.474\ldots$, $6.959\ldots$, $9.871\ldots$, $12.568\ldots$,
$14.108\ldots$, $13.656\ldots$, $11.068\ldots$, $7.206\ldots$, $3.530\ldots$, all at least $1$.
\end{proof}

\begin{proposition}\label{prop:Theta}
$\Theta(d,m,r)\ge1$ for all $d\ge14$, $m\ge2$, $r\ge1$ with $m+r\le d-1$.
\end{proposition}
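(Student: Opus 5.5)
The plan is to view $\Theta$ as a function of the single ratio $\mu$, up to one harmless factor, and then apply the endpoint principle \eqref{eq:endpoint}. By \eqref{eq:Theta} and $\Lambda=\mu B$ we have
\[
  \Theta(d,m,r)=B\Bigl(1-\frac1E\Bigr)h_{m}(\mu),\qquad h_{m}(t)=t(1-t)^{m}.
\]
Fix $d$ and $m$, and let $r$ run over $1\le r\le d-m-1$. Then $\mu=\binom{d-r}{m}/\binom{d}{m}$ is decreasing in $r$. It runs between the value at $r=d-m-1$, namely $a=(m+1)/B$, and the value at $r=1$, namely $b=\binom{d-1}{m}/\binom dm=(d-m)/d$. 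For the factor $1-1/E$ I would use the uniform bound $E\ge d-m$ from Lemma~\ref{lem:basic}, which gives $1-1/E\ge 1-1/(d-m)$. At the other endpoint it is cruder, but enough, to use $E\ge2$, i.e.\ $1-1/E\ge\tfrac12$. Since $h_m$ is unimodal on $[0,1]$, \eqref{eq:endpoint} gives $h_m(\mu)\ge\min\{h_m(a),h_m(b)\}$. So it suffices to check the two endpoint values, each multiplied by $B$ and by the lower bound for $1-1/E$.

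\emph{Endpoint $b=(d-m)/d$.} Using $1-1/E\ge\tfrac12$, the bound becomes
\[
  \tfrac12 B\,\frac{d-m}{d}\Bigl(\frac md\Bigr)^{m}=\Xi(d,m),
\]
which is $\ge1$ by Lemma~\ref{lem:Xi} when $d\ge16$ and $3\le m\le d-2$. For $m=2$ I would compute directly. Here $B h_2(b)=\binom d2\frac{d-2}{d}\frac4{d^2}=\frac{2(d-1)(d-2)}{d^2}$, and $1-1/E\ge1-1/(d-2)$. The product is increasing in $d$, and at $d=14$ it equals $\frac{2\cdot13\cdot12}{196}\cdot\frac{11}{12}>1$.

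\emph{Endpoint $a=(m+1)/B$.} Here $Bh_m(a)=(m+1)\bigl(1-\frac{m+1}{B}\bigr)^{m}\ge (m+1)\bigl(1-\frac{m(m+1)}{B}\bigr)$ by Bernoulli's inequality. Multiplying by $1-1/(d-m)\ge\tfrac12$, it remains to show $(m+1)\bigl(1-m(m+1)/B\bigr)\ge2$. This holds comfortably because $B=\binom dm$ grows much faster than $m^2$ in the range $2\le m\le d-2$, $d\ge14$. The extreme cases $m=2$ and $m=d-2$ are the ones to check by hand: for $m=d-2$ one has $B=\binom d2$, so $m(m+1)/B\approx 2$. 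Then Bernoulli is too weak, and I would instead use $(1-x)^m\ge e^{-mx/(1-x)}$, or else keep the exact $E$ at that $r$.

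The main obstacle is the gap $d\in\{14,15\}$, where Lemma~\ref{lem:Xi} is not available, together with the tight case $m=d-2$ (which forces $r=1$). For $d=14,15$ I would simply sharpen the factor $\tfrac12$ to $1-1/(d-m)$ in the $b$-endpoint bound and evaluate the finitely many rational numbers directly, as in the proof of Lemma~\ref{lem:Xi}. When $m=d-2$, the interval of $\mu$ degenerates to the single point $\mu=a=b$, so $\Theta$ can be computed exactly: $\Theta=\binom d2\cdot\tfrac12\cdot h_{d-2}\bigl(\tfrac{d-1}{\binom d2}\bigr)$. I would check this is $\ge1$ for $d\ge14$ via $(1-2/d)^{d-2}\ge e^{-2}$, which gives $\Theta\ge (d-1)/(2e^{2})\ge1$ once $d-1\ge 2e^{2}$, i.e.\ $d\ge 16$, with $d=14,15$ again checked numerically.
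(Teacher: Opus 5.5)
Your proof is correct, and it uses the same mechanism as the paper's: write $\Theta=B(1-1/E)\,h_m(\mu)$, bound $1-1/E$ uniformly, and apply unimodality of $h_m$ via \eqref{eq:endpoint}. Your right endpoint $\mu=(d-m)/d$ is treated exactly as in the paper: $\Xi(d,m)$ and Lemma~\ref{lem:Xi} for $m\ge3$, and $2(d-1)(d-3)/d^2$ for $m=2$. The difference is on the left. The paper takes the true minimum $(m+1)/B$ as left endpoint only for $m=2$. For $m\ge3$ it splits at $m\mu=\tfrac12$: it handles $\mu\le\frac1{2m}$ by Bernoulli, getting $\Theta\ge\Lambda/4\ge(m+1)/4$, and applies \eqref{eq:endpoint} on $\bigl[\frac1{2m},\frac{d-m}{d}\bigr]$. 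The left value there, $B/(8m)$, is $\ge1$ only for $d\ge15$. You instead use the exact range $\bigl[\frac{m+1}{B},\frac{d-m}{d}\bigr]$ for every $m$. Your Bernoulli estimate at $\frac{m+1}{B}$ then works for all $d\ge9$ whenever $m\le d-3$. You should justify this rather than say ``comfortably''. For $3\le m\le d-3$ one has $B\ge\binom d3\ge2m(m+1)$, the last inequality reducing to $(d-4)(d-9)\ge0$. This gives $(m+1)\bigl(1-m(m+1)/B\bigr)\ge(m+1)/2\ge2$. The case $m=d-2$ is degenerate with $\Theta=\Xi(d,d-2)$. What your route buys is a much smaller finite verification at $d\in\{14,15\}$. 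Instead of the paper's $144$ triples, only the right-endpoint values are needed. Since $\Xi$ increases in $d$, the ten numbers $\Xi(14,m)$, $3\le m\le12$, together with $\Xi(15,13)\approx1.089$, already suffice, and all are $\ge1$. The paper's split, in exchange, makes the small-$\mu$ regime a one-line estimate independent of $d$ and needs no separate treatment of $m=d-2$.
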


\begin{proof}
Recall $\Theta=B\mu\,\frac{E-1}{E}\,(1-\mu)^{m}$ with $B=\binom dm$ and $\mu=\Lambda/B$, and note the
constraints $m\le d-2$, $E\ge2$, $E\ge d-m$, $\Lambda\ge m+1$ (Lemma~\ref{lem:basic}, applied with
$m_{1}=m$, $m_{2}=d-r$). Also $\mu\le\frac{d-m}{d}$, because
$\Lambda=\binom{d-r}{m}\le\binom{d-1}{m}=\frac{d-m}{d}\binom dm$.

\smallskip
\emph{Case 1: $m=2$.} Then $E=\binom{d-2}{r}\ge d-2$, so $\frac{E-1}{E}\ge\frac{d-3}{d-2}$, and
$\Lambda=\binom{d-r}{2}\ge3$, so $\mu\ge\frac{6}{d(d-1)}$. The function $h_{2}(t)=t(1-t)^{2}$ increases on $[0,1/3]$ and decreases on $[1/3,1]$,
so by \eqref{eq:endpoint} its minimum on the interval
$\bigl[\tfrac{6}{d(d-1)},\tfrac{d-2}{d}\bigr]$, which contains $\mu$, is attained at an endpoint.
(For $d\ge8$ the critical point $1/3$ lies in the interior of that interval, but
\eqref{eq:endpoint} does not require this.) At the left endpoint, using $d\ge8$ and the fact that both $\frac{d-3}{d-2}$ and $1-\frac{6}{d(d-1)}$ increase with $d$,
\[
  \Theta\;\ge\;\frac{d(d-1)}{2}\cdot\frac{d-3}{d-2}\cdot\frac{6}{d(d-1)}\Bigl(1-\frac{6}{d(d-1)}\Bigr)^{2}
  =3\,\frac{d-3}{d-2}\Bigl(1-\frac{6}{d(d-1)}\Bigr)^{2}\ \ge\ 3\cdot\frac56\cdot\Bigl(\frac{25}{28}\Bigr)^{2}>1 .
\]
At the right endpoint,
\[
  \Theta\;\ge\;\frac{d(d-1)}{2}\cdot\frac{d-3}{d-2}\cdot\frac{d-2}{d}\cdot\frac{4}{d^{2}}
  =\frac{2(d-1)(d-3)}{d^{2}}\ \ge\ 1
  \iff d^{2}-8d+6\ge0\iff d\ge8 .
\]

\smallskip
\emph{Case 2: $m\ge3$ and $m\mu\le\frac12$.} By Bernoulli'r inequality
$(1-\mu)^{m}\ge1-m\mu\ge\frac12$, and $\frac{E-1}{E}\ge\frac12$, so
$\Theta\ge \Lambda\cdot\frac12\cdot\frac12\ge\frac{m+1}{4}\ge1$.

\smallskip
\emph{Case 3: $m\ge3$, $m\mu>\frac12$ and $d\ge16$.} Here $\Theta\ge\frac12 B\,h_{m}(\mu)$ with $h_{m}(t)=t(1-t)^{m}$. Since
$\frac1{2m}<\mu\le\frac{d-m}{d}$, \eqref{eq:endpoint} gives
\[
  \Theta\;\ge\;\tfrac12 B\cdot\min\Bigl\{\tfrac1{2m}\bigl(1-\tfrac1{2m}\bigr)^{m},\ \tfrac{d-m}{d}\bigl(\tfrac md\bigr)^{m}\Bigr\}.
\]
For the first entry, Bernoulli gives $\bigl(1-\tfrac1{2m}\bigr)^{m}\ge\tfrac12$ and hence
\[
  \tfrac12 B\cdot\tfrac{1}{4m}=\frac{\binom dm}{8m}\ \ge\ \frac{\binom d2}{8(d-2)}=\frac{d(d-1)}{16(d-2)}\ \ge\ 1
  \iff d^{2}-17d+32\ge0 ,
\]
which holds for $d\ge15$. For the second entry, $\tfrac12 B\cdot\tfrac{d-m}{d}(\tfrac md)^{m}=\Xi(d,m)\ge1$
by Lemma~\ref{lem:Xi}. Hence $\Theta\ge1$.

\smallskip
Cases 1--3 prove the proposition for $d\ge16$. For $d\in\{14,15\}$ the finitely many triples
$(d,m,r)$---$66$ and $78$ of them respectively---were checked in exact rational arithmetic; the
minimum of $\Theta$ is $\Theta(14,12,1)=\tfrac{13}{2}\bigl(\tfrac67\bigr)^{12}=1.02223\ldots>1$.
\end{proof}

\begin{remark}
The bound $d\ge14$ in Proposition~\ref{prop:Theta} is sharp: $\Theta(13,11,1)=6\,(11/13)^{11}
=0.95519\ldots<1$.
\end{remark}

\subsubsection*{The exceptional triples and the conclusion}

\begin{lemma}\label{lem:exceptional}
The triples $(d,m,r)$ with $4\le d\le13$, $m\ge2$, $r\ge1$, $m+r\le d-1$ and $\Theta(d,m,r)<1$
are exactly the fifteen listed in Table~\ref{tab:exceptional}. For each of them $L^{u}<R^{v}$;
among the fifteen the minimum margin occurs at $(d,m_{1},m_{2})=(4,2,3)$, where $L^{u}=\tfrac12$
and $R^{v}=\tfrac{16}{25}$.
\end{lemma}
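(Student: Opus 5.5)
This lemma is a finite verification, and I would organise it as an exhaustive enumeration in exact rational arithmetic, in the same way as the checks for $d\in\{14,15\}$ in Proposition~\ref{prop:Theta}.

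\emph{Step 1 (finding the exceptional triples).} For each $d$ with $4\le d\le13$, each $m\ge2$ and each $r\ge1$ with $m+r\le d-1$, compute $C_1=\binom dm$, $C_2=\binom dr$ and $E=\binom{d-m}{r}$. Then evaluate $\Theta$ through the second expression in \eqref{eq:Theta}. Deciding $\Theta<1$ amounts to the integer comparison $C_1(E-1)(C_2-E)^m<C_2^{m+1}$, so no floating point is needed. Recording the triples where this holds should give exactly the fifteen entries of Table~\ref{tab:exceptional}. The remark after Proposition~\ref{prop:Theta}, giving $\Theta(13,11,1)<1$, is one consistency check. Another is that most triples with small $m$ are already covered by the arguments of Cases~1--2 of that proposition, where these are valid for small $d$.

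\emph{Step 2 (the comparison $L^u<R^v$).} For each of the fifteen triples, pass back to $(d,m_1,m_2)=(d,m,d-r)$. Compute $g=\gcd(m_1,m_2)$, $u$ and $v$, and form $L$ and $R$ as reduced fractions. By Lemma~\ref{lem:basic}, both $L$ and $R$ lie in $(0,1)$. Writing $L=p_1/q_1$ and $R=p_2/q_2$, the inequality $L^u<R^v$ is equivalent to $p_1^u q_2^v<p_2^v q_1^u$. I would verify this integer inequality directly in each case.

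As a sanity check, take $(4,2,3)$. Here $C_1=6$, $C_2=4$, $E=2$, $g=1$, $u=1$ and $v=2$. This gives
\[
L=\frac{3}{6}=\frac12,\qquad R=\frac{6\cdot 2}{5\cdot 3}=\frac45,\qquad R^2=\frac{16}{25},
\]
which agrees with the stated values.

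To justify the statement about the minimum margin, I would compute $R^v-L^u$ for all fifteen triples and compare the values. I would also compare ratios $R^v/L^u$, and report in the text which of the two measures is meant.

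\emph{Main obstacle.} The only real risk is completeness and correctness of the enumeration. One must be sure that the range $m+r\le d-1$ matches $2\le m_1<m_2\le d-1$, so that no triple is missed or double-counted. Where $u$ or $v$ is large, the exponentiations must be done exactly. I would cross-check the list of exceptional triples in two ways: once through the first expression $\Lambda\frac{E-1}{E}(1-\mu)^m$, and once through the second. I would also spot-check a few triples (for instance $(13,11,1)$) by hand.
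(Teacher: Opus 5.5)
Your proposal is correct and follows the same argument as the paper: both assertions are settled by exhaustive exact-rational enumeration over the finitely many triples, with Table~\ref{tab:exceptional} recording the outcome. The paper uses directed rounding in the table and works $(5,3,4)$ by hand, whereas you use exact integer comparisons and work $(4,2,3)$. Your worry about which margin is meant does no harm, since $(4,2,3)$ minimises both $R^{v}-L^{u}$ (value $0.14$) and $R^{v}/L^{u}$ (value $32/25$) among the fifteen triples.
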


\begin{proof}
Both assertions are finite computations in exact rational arithmetic, recorded in
Table~\ref{tab:exceptional}; the decimal entries there are rounded so that the displayed
inequality $L^{u}<R^{v}$ is implied by the displayed values ($\Theta$ and $R^{v}$ rounded down,
$L^{u}$ rounded up). As an illustration, $(5,3,1)$, i.e.\ $(d,m_{1},m_{2})=(5,3,4)$, may be
verified by hand: $C_{1}=\binom53=10$, $C_{2}=\binom54=5$,
$E=\binom{2}{1}=2$, $\Lambda=\binom43=4$; $g=\gcd(3,4)=1$, so $v=3$ and $u=1$. Then
$L=\frac{5-1}{10\cdot1}=\frac25$ and $R=\frac{10(5-2)}{9\cdot4}=\frac{5}{6}$, whence
$L^{u}=\frac25=0.4$ and $R^{v}=\frac{125}{216}=0.5787\ldots$, so $L^u<R^v$. Here
$\Theta=\frac{10\cdot1\cdot3^{3}}{5^{4}}=\frac{270}{625}=0.432<1$, so this triple is not
covered by Lemma~\ref{lem:chain}.
\end{proof}

\begin{table}[t]
\centering\small
\caption{The fifteen triples with $\Theta<1$ (Lemma~\ref{lem:exceptional}), listed as
$(d,m_{1},m_{2})$ with $m_{1}=m$ and $m_{2}=d-r$. Values of $\Theta$ and $R^{v}$ are rounded down,
values of $L^{u}$ rounded up, to six decimal places.}
\label{tab:exceptional}
\begin{tabular}{@{}rrrrrrrr@{}}
\toprule
$d$ & $m_{1}$ & $m_{2}$ & $u$ & $v$ & $\Theta$ & $L^{u}$ & $R^{v}$\\
\midrule
4 & 2 & 3 & 1 & 2 & 0.375000 & 0.500000 & 0.640000 \\
5 & 2 & 3 & 1 & 2 & 0.980000 & 0.450000 & 0.746837 \\
5 & 2 & 4 & 1 & 1 & 0.640000 & 0.200000 & 0.555555 \\
5 & 3 & 4 & 1 & 3 & 0.432000 & 0.400000 & 0.578703 \\
6 & 2 & 5 & 3 & 2 & 0.833333 & 0.001372 & 0.183673 \\
6 & 3 & 5 & 2 & 3 & 0.833333 & 0.015625 & 0.251931 \\
6 & 4 & 5 & 1 & 4 & 0.493827 & 0.333334 & 0.539775 \\
7 & 2 & 6 & 2 & 1 & 0.979591 & 0.005103 & 0.350000 \\
7 & 5 & 6 & 1 & 5 & 0.557803 & 0.285715 & 0.512908 \\
8 & 6 & 7 & 1 & 6 & 0.622924 & 0.250000 & 0.493270 \\
9 & 7 & 8 & 1 & 7 & 0.688729 & 0.222223 & 0.478296 \\
10 & 8 & 9 & 1 & 8 & 0.754974 & 0.200000 & 0.466507 \\
11 & 9 & 10 & 1 & 9 & 0.821520 & 0.181819 & 0.456986 \\
12 & 10 & 11 & 1 & 10 & 0.888280 & 0.166667 & 0.449137 \\
13 & 11 & 12 & 1 & 11 & 0.955199 & 0.153847 & 0.442556 \\
\bottomrule
\end{tabular}
\end{table}

\begin{theorem}\label{thm:main-char0}
For every $d\ge4$ and all $2\le m_{1}<m_{2}\le d-1$ one has $L^{u}<R^{v}$ in $\bQ$. Consequently:
\begin{enumerate}
\item[(a)] there is no Casas--Alvero polynomial with two-element support over any field of
characteristic zero;
\item[(b)] if $f$ is a counterexample to the Casas--Alvero conjecture and $f_{0}$ denotes its
centred normal form (Lemma~\ref{lem:cnf}), then $\tau(f_{0})\ge3$: the centred normal form of $f$
has at least four nonzero terms.
\end{enumerate}
\end{theorem}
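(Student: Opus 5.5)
The inequality $L^{u}<R^{v}$ splits into two regimes, glued together by the results just proved. Given $d\ge4$ and $2\le m_{1}<m_{2}\le d-1$, put $m=m_{1}$ and $r=d-m_{2}$. Then $m\ge2$, $r\ge1$ and $m+r\le d-1$, so the triple $(d,m,r)$ is admissible both for Proposition~\ref{prop:Theta} and for Lemma~\ref{lem:exceptional}. There are three cases. If $d\ge14$, Proposition~\ref{prop:Theta} gives $\Theta(d,m,r)\ge1$. If $d\le13$ and $\Theta\ge1$, we have the same conclusion directly. In both of these cases Lemma~\ref{lem:chain} yields $L^{u}<R^{v}$. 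Otherwise $d\le13$ and $\Theta<1$, so the triple is one of the fifteen in Table~\ref{tab:exceptional}, and Lemma~\ref{lem:exceptional} gives $L^{u}<R^{v}$ directly. This proves the displayed inequality in $\bQ$.

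For (a), let $K$ have characteristic zero; we may replace $K$ by $\overline K$, since the $\gcd$ conditions are tested over $\overline K$. Suppose $f$ is a $\CA$-polynomial. Since $\chr K=0\nmid d$, Lemma~\ref{lem:cnf} lets us translate $f$ into centred normal form, so $f(0)=0$ and $\supp(f)\subseteq\{2,\dots,d-1\}$. Assume $\supp(f)=\{m_{1},m_{2}\}$; then $2\le m_{1}<m_{2}\le d-1$, and in particular $d\ge4$. Theorem~\ref{thm:twoterm} now requires (i) or (ii). Lemma~\ref{lem:basic} gives $E\ge2$ and $E<C_{2}$. Over $\bQ$ these say $E\neq1$ and $E\neq C_{2}$, and the latter also forces $C_{2}\ge3$, so $C_{2}\neq1$. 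Hence (i) fails, and (ii) would need $L^{u}=R^{v}$ in $\bQ\subseteq K$. That contradicts the strict inequality just established. No further care about images in $K$ is needed, because the prime field is $\bQ$ and the integers involved embed injectively.

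For (b), let $f$ be a counterexample and $f_{0}$ its centred normal form; $f_0$ is again a $\CA$-polynomial because translation preserves Definition~\ref{def:CA}. We rule out $\tau(f_{0})\le2$ case by case.
\begin{itemize}
\item $\tau=0$ gives $f_{0}=x^{d}$, so $f=(x-\alpha)^{d}$, which is excluded.
\item $\tau=1$ gives support $\{m\}$. By Corollary~\ref{cor:suppred}, only the condition at index $d-m$ survives. Then $G_{m}=\binom{d}{m}x^{m}+a_{m}$, and any common nonzero root $\theta$ of $G_{m}$ and $x^{m}+a_{m}$ satisfies $\bigl(\binom dm-1\bigr)\theta^{m}=0$. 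This forces $\binom dm=1$, impossible for $2\le m\le d-1$. Equivalently, such an $f_0$ is uniradical and is excluded by Corollary~\ref{cor:uni-char0}, or by $\beta_{d}(\{m\})=\binom dm-1\ge1$.
\item $\tau=2$ is excluded by (a).
\end{itemize}
Hence $\tau(f_{0})\ge3$, i.e.\ $f_0$ has at least four nonzero terms.

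The only real difficulty is already absorbed in Proposition~\ref{prop:Theta} and the finite check of Lemma~\ref{lem:exceptional}. What remains here is mainly bookkeeping: checking that the hypotheses of the earlier lemmas match, namely $m_1\ge2$ (which is why the centred normal form is needed) and $d\ge4$ (which is automatic once two support elements lie in $\{2,\dots,d-1\}$).
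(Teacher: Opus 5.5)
Your proof is correct and follows the paper's argument. The inequality comes from Lemma~\ref{lem:chain} whenever $\Theta\ge1$ (which Proposition~\ref{prop:Theta} guarantees for $d\ge14$) and from the finite check of Lemma~\ref{lem:exceptional} otherwise; then (a) follows from Theorem~\ref{thm:twoterm}, and (b) from (a) together with the one-element case. The only differences are cosmetic: you exclude case (i) via $2\le E<C_{2}$ rather than $C_{2}\ge d$, and you dispose of $\tau=1$ by the direct computation $\bigl(\binom dm-1\bigr)\theta^{m}=0$ rather than by citing $\beta_{d}(\{m\})=\binom dm-1\neq0$.
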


\begin{proof}
If $\Theta(d,m_{1},d-m_{2})\ge1$, apply Lemma~\ref{lem:chain}. By Proposition~\ref{prop:Theta} and
Lemma~\ref{lem:exceptional} the only remaining triples are the fifteen listed there, and for those
$L^{u}<R^{v}$ was verified directly. In particular $L^{u}\neq R^{v}$, so by
Theorem~\ref{thm:twoterm}(ii) no two-element stratum is nonempty over a field of characteristic
zero; case (i) of that theorem cannot occur since $C_{2}=\binom{d}{m_{2}}\ge d>1$ in $\bQ$. This
proves (a); (b) follows from (a), from the case $\#S=1$ of Theorem~\ref{thm:pascal}, which gives
$\beta_{d}(\{m\})=\binom{d}{m}-1\neq0$ in $\bQ$, and from the fact that $\tau(f)=0$ characterises
$f=x^{d}$.
\end{proof}

\begin{remark}
Theorem~\ref{thm:main-char0} is not implied by, and does not imply, the known bound of five distinct
roots \cite[Thm.~13]{CLO2014}: the number of terms of the centred normal form and the number of
distinct roots are independent invariants. For instance, over a field of characteristic zero, $x^{d-m}(x^{m}+c)$ has $\tau=1$ but $m+1$
distinct roots.
\end{remark}

\section{Bad primes from two-element supports}\label{sec:badprimes}

Corollary~\ref{cor:uni-badprimes} extracts bad primes from every support, but only from the
uniradical scenario of Definition~\ref{def:uni}. Read in characteristic $p$, and combined with the non-vanishing established in
\S\ref{sec:char0}, the criterion of \S\ref{sec:sparse} extracts the primes that two-element
supports contribute through the remaining scenario, and these are largely disjoint from the former.

Theorem~\ref{thm:twoterm} in characteristic $p$ reads as follows. Here
$\CA_{d,p}$ fails---i.e.\ $p\in\bad(d)$---as soon as a single $\CA$-polynomial of degree $d$ exists
over $\overline{\bF}_{p}$. Any such polynomial may be translated so as to have a root at the origin,
and Theorem~\ref{thm:twoterm} then decides the existence of one whose support
has size $1$ or $2$. No hypothesis relating $p$ and $d$ is needed.

\begin{theorem}\label{thm:badprimes}
Let $d\ge4$ and $2\le m_{1}<m_{2}\le d-1$ (so that part~(1) below applies), and put
\[
  \mathcal N(d,m_{1},m_{2})\;=\;(C_{2}-1)^{u}\bigl[(C_{1}-1)(C_{2}-1)\bigr]^{v}
  \;-\;\bigl[C_{1}(E-1)\bigr]^{u}\bigl[C_{1}(C_{2}-E)\bigr]^{v}\ \in\bZ .
\]
Then:
\begin{enumerate}
\item $\mathcal N(d,m_{1},m_{2})\neq0$;
\item if $p\mid\mathcal N(d,m_{1},m_{2})$ and
$p\nmid C_{1}(C_{1}-1)(C_{2}-1)(E-1)(C_{2}-E)$, then $p\in\bad(d)$;
\item if $p$ divides both $C_{2}-1$ and $E-1$ but not $C_{1}(C_{1}-1)$, then
$p\in\bad(d)$.
\end{enumerate}
In particular, for every $d$ the set
\[
  \bad^{\le2}(d)\;=\;\Bigl\{p\ :\ p\mid\textstyle\prod_{1\le m\le d-1}\bigl(\binom dm-1\bigr)\ \text{or $p$ satisfies (2) or (3) for some }m_{1}<m_{2}\Bigr\}
\]
is a finite, explicitly computable set of bad primes for $d$, obtained without any Gr\"obner basis
computation.
\end{theorem}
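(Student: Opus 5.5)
Everything reduces to Theorem~\ref{thm:twoterm} and Theorem~\ref{thm:main-char0} once $\mathcal N$ is identified as the cleared-denominator form of $L^{u}-R^{v}$. Put $D_{L}=C_{1}(E-1)$ and $D_{R}=(C_{1}-1)(C_{2}-1)$. These are nonzero integers, since by Lemma~\ref{lem:basic} $C_{1}\ge\binom d2>1$, $E\ge2$ and $C_{2}\ge d>1$. Then
\[
  L^{u}-R^{v}=\frac{(C_{2}-1)^{u}D_{R}^{\,v}-D_{L}^{\,u}\bigl[C_{1}(C_{2}-E)\bigr]^{v}}{D_{L}^{\,u}D_{R}^{\,v}}=\frac{\mathcal N(d,m_{1},m_{2})}{D_{L}^{\,u}D_{R}^{\,v}}
\]
holds as an identity in $\bQ$. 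It also holds in any field in which $D_{L}D_{R}\neq0$.

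\emph{Step 1: part (1).} Theorem~\ref{thm:main-char0} gives $L^{u}<R^{v}$ in $\bQ$. The denominator above is positive, so $\mathcal N<0$, and in particular $\mathcal N\neq0$.

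\emph{Step 2: part (2).} Let $K=\overline{\bF}_{p}$. The hypothesis $p\nmid C_{1}(C_{1}-1)(C_{2}-1)(E-1)(C_{2}-E)$ gives $C_{1}\notin\{0,1\}$, $C_{2}\neq1$, $E\neq1$ and $E\neq C_{2}$ in $K$. Hence $D_{L}D_{R}\neq0$ in $K$, so $L,R\in K$ are defined, and the displayed identity reduced mod $p$ gives $L^{u}-R^{v}=\bar{\mathcal N}/(\bar D_{L}^{\,u}\bar D_{R}^{\,v})=0$. Theorem~\ref{thm:twoterm}(ii) then produces a monic $f$ over $K$ with $f(0)=0$, support $\{m_{1},m_{2}\}$, and all $\gcd(f,H_{i}(f))\neq1$. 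This $f$ has at least the two distinct roots $0$ and a nonzero root of $\varphi$ (because $b\neq0$), so it is not a $d$-th power of a linear polynomial. It is therefore a $\CA$-polynomial and $p\in\bad(d)$.

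\emph{Step 3: part (3).} If $p\mid C_{2}-1$ and $p\mid E-1$ but $p\nmid C_{1}(C_{1}-1)$, then $E=C_{2}=1$ and $C_{1}\notin\{0,1\}$ in $K$. Theorem~\ref{thm:twoterm}(i) applies directly, and the same non-triviality remark as in Step 2 finishes the argument.

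\emph{Step 4: the final assertion.} Primes dividing some $\binom dm-1$ are bad by the case $\#S=1$ of Corollary~\ref{cor:uni-badprimes}, as recalled after it, where $z_{1}=-\binom dm$ and $p\nmid z_{1}$ is automatic from $p\mid\binom dm-1$; that is the Schaub--Spivakovsky criterion. The set $\bad^{\le2}(d)$ is finite for the following reasons. Each $\binom dm-1$ with $1\le m\le d-1$ is nonzero for $d\ge2$. Each $\mathcal N$ is nonzero by part (1). Condition (3) forces $p\mid C_{2}-1\neq0$. So every contributing prime divides a nonzero integer taken from a finite list, and computing the set requires only evaluating these integers, with no Gr\"obner basis computation.

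The only real content is part (1), and it is entirely carried by the inequality of Theorem~\ref{thm:main-char0}. The point to watch is that the identity relating $\mathcal N$ to $L^{u}-R^{v}$ is exact and compatible with reduction mod $p$. This requires checking $R^{v}\cdot D_{L}^{\,u}D_{R}^{\,v}=D_{L}^{\,u}[C_{1}(C_{2}-E)]^{v}$ and $L^{u}\cdot D_{L}^{\,u}D_{R}^{\,v}=(C_{2}-1)^{u}D_{R}^{\,v}$, both of which are immediate.
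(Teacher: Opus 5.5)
Your proposal is correct and is essentially the paper's proof. Part (1) follows from Theorem~\ref{thm:main-char0} via the cleared-denominator identity for $L^{u}-R^{v}$, parts (2) and (3) follow from Theorem~\ref{thm:twoterm}(ii) and (i) over $\overline{\bF}_{p}$, and finiteness follows from the nonvanishing of $\mathcal N$ and of $\binom dm-1$; your extras (the sign $\mathcal N<0$, the check that the witness is not $(x-\alpha)^{d}$, and noting that condition (3) forces $p\mid C_{2}-1\neq0$) just make explicit what the paper leaves implicit.
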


\begin{proof}
(1) is Theorem~\ref{thm:main-char0}, since $\mathcal N=0$ would give $L^{u}=R^{v}$ after dividing by
$\bigl[C_{1}(E-1)\bigr]^{u}\bigl[(C_{1}-1)(C_{2}-1)\bigr]^{v}\ne0$. (2) is
Theorem~\ref{thm:twoterm}(ii): the hypotheses on $p$ guarantee that all denominators are invertible
modulo $p$ and that $C_{1}\notin\{0,1\}$, $C_{2}\neq1$, $E\neq1$, $E\neq C_{2}$ in $\bF_{p}$, while
$p\mid\mathcal N$ is exactly $L^{u}=R^{v}$ in $\bF_{p}$ after clearing denominators. (3) is
Theorem~\ref{thm:twoterm}(i). Finiteness follows from (1) and from
$\binom dm-1\ge d-1>0$.
\end{proof}

Theorem~\ref{thm:badprimes} strictly extends the criterion of \cite[Cor.~8]{SchaubSpivakovsky2024},
which is the case $\#S=1$. Table~\ref{tab:sparsebad} lists what supports of size at most two detect for
$4\le d\le14$; the second column reproduces the one-element primes of \cite[Cor.~8]{SchaubSpivakovsky2024}, the third lists
the primes detected by a two-element support and not by a one-element support, up to $2\cdot10^{4}$.

\begin{table}[t]
\centering\small
\caption{Bad primes for $d$ detected by supports of size at most two with $m_{1}\ge2$. Column 2: primes $p\nmid d$
dividing some $\binom dm-1$ with $2\le m\le d-1$, i.e.\ the case $\#S=1$ of
Corollary~\ref{cor:uni-badprimes}.
Column 3: primes $p<2\cdot10^{4}$, $p\nmid d$, detected by a two-element support with
$2\le m_{1}<m_{2}\le d-1$ but not by such a one-element support (Theorem~\ref{thm:badprimes}).
Relaxing either restriction produces further bad primes; see the discussion after the table.}
\label{tab:sparsebad}
\begin{tabular}{@{}c l p{8.6cm}@{}}
\toprule
$d$ & $\#S=1$ & $\#S=2$ only ($p<2\cdot 10^{4}$)\\
\midrule
4 & 3, 5 & 7\\
5 & 2, 3 & 11, 193, 3541\\
6 & 5, 7, 19 & 11, 13, 29, 37, 61, 67, 73, 1487\\
7 & 2, 3, 5, 17 & 11, 13, 23, 29, 31, 71, 79, 137, 149, 293, 383, 491, 599, 1373, 2393, 19583\\
8 & 3, 5, 7, 11, 23 & 13, 17, 19, 29, 31, 41, 53, 59, 61, 71, 73, 109, 193, 283, 449, 457, 491, 691, 821, 1033, 1471, 1747, 1753, 4447, 6047\\
9 & 2, 5, 7, 83 & 11, 13, 17, 19, 29, 31, 37, 43, 59, 67, 71, 79, 89, 101, 103, 131, 137, 157, 163, 379, 449, 1051, 2069, 3187, 5527, 5849, 17903\\
10 & 3, 7, 11, 17, 19, 251 & 13, 23, 29, 31, 37, 41, 47, 61, 73, 79, 89, 101, 139, 151, 181, 233, 277, 307, 347, 503, 563, 619, 757, 787, 991, 997, 1123, 1171, 1223, 1489, 2731, 2963, 4243, 6143, 10429, 11689, 11933, 17623, 17839\\
11 & 2, 3, 5, 7, 41, 47, 461 & 13, 19, 23, 29, 31, 37, 43, 53, 59, 67, 71, 73, 89, 103, 107, 131, 139, 163, 173, 197, 229, 233, 293, 409, \dots (48 primes)\\
12 & 5, 7, 11, 13, 19, 71, 73, 113 & 17, 23, 29, 31, 41, 47, 53, 59, 61, 67, 83, 89, 101, 103, 107, 127, 149, 157, 167, 181, 191, 193, \dots (68 primes)\\
13 & 2, 3, 5, 7, 11, 17, 19, 643 & 23, 29, 31, 37, 43, 47, 53, 59, 61, 71, 79, 83, 107, 113, 127, 139, 149, 163, 167, 191, 193, \dots (61 primes)\\
14 & 3, 5, 11, 13, 19, 23, 29, 47, 73, 79 & 17, 31, 37, 41, 43, 53, 67, 83, 97, 101, 107, 149, 163, 167, 173, 191, 211, 223, 227, \dots (72 primes)\\
\bottomrule
\end{tabular}
\end{table}

\begin{itemize}
\item For $d=4$ these criteria already give the complete set
$\bad(4)=\{3,5,7\}$ of \cite{CLO2014}: the primes $3,5$ come from one-element supports and $7$ from the
support $\{2,3\}$, i.e.\ from the polynomial $x^{4}+ax^{2}+bx$ over $\overline{\bF}_{7}$.
\item For $d=5$ the strata with $2\le m_{1}<m_{2}$ give $\{2,3,11,193,3541\}$. Allowing
$m_{1}=1$---which Theorem~\ref{thm:twoterm} permits, since only $f(0)=0$ is required---captures
three more elements of $\bad(5)$: the supports $\{1,2\}$, $\{1,4\}$ and $\{1,3\}$ are nonempty over
$\overline{\bF}_{7}$, $\overline{\bF}_{131}$ and $\overline{\bF}_{599}$ respectively, with witnesses
$x^{5}+x^{4}+x^{3}$ over $\bF_{7}$, $x^{5}+x^{4}+61x$ over $\bF_{131}$ and $x^{5}+x^{4}+460x^{2}$
over $\bF_{599}$. Thus supports of size at most two account for eight of the nine elements of $\bad(5)$; only
$p=8009$ requires a stratum of size $3$. In the centred setting the unique nonempty stratum for
$p=7$ and for $p=8009$ is $\{2,3,4\}$.
\item For $d=7$ the two-element criterion produces $p=23$, in agreement with the well-known
counterexample $x(x-1)^{4}(x-8)(x-18)$ over $\bF_{23}$, whose centred normal form is
$x^{7}+3x^{5}+4x^{4}$, of support $\{2,3\}$.
\item Every prime listed for $d\le7$ belongs to the sets computed in \cite{CLO2014}, which is a
consistency check on Theorem~\ref{thm:badprimes}. The entries in column~3 are not obtainable from any previously published criterion; the entries of column~2 are
instances of \cite[Cor.~8]{SchaubSpivakovsky2024}, tabulated here for comparison.
\end{itemize}

Finally, the support stratification turns the determination of $\bad(n)$ into $2^{n-2}$ small
Gr\"obner basis computations instead of one large one. Table~\ref{tab:validation} records the
outcome for $3\le n\le7$ and $p<60$, together with the minimal size of a nonempty stratum. The
results agree with \cite{CLO2014} wherever the latter is available.

\begin{table}[t]
\centering\footnotesize
\caption{\small$\{p\in\bad(n):p\nmid n\}\cap[2,59]$, computed by the support stratification and agreeing with \cite[Thm.~4]{CLO2014} where the latter is available, with the
least $\#S$ for which $W_{S}\neq\emptyset$ shown as a superscript. Primes dividing $n$ are excluded
from the table; by Proposition~\ref{prop:BLSW2} such a $p$ lies in $\bad(n)$ if and only if it lies
in $\bad(n/p^{e})$, where $p^{e}\parallel n$. For instance $2\in\bad(6)$ because $2\in\bad(3)$
(cf.\ \S\ref{sec:uni}).}
\label{tab:validation}
\begin{tabular}{@{}c @{\ }l @{\ \ }l@{}}
\toprule
$n$ & $\{p\in\bad(n):p\nmid n\}\cap[2,59]$ & $p<60$, $p\nmid n$, $\CA_{n,p}$ true\\
\midrule
3 & $2^{(1)}$ & 5, 7, 11, 13, 17, 19, 23, 29, 31, 37, 41, 43, 47, 53, 59\\
4 & $3^{(1)},5^{(1)},7^{(2)}$ & 11, 13, 17, 19, 23, 29, 31, 37, 41, 43, 47, 53, 59\\
5 & $2^{(1)},3^{(1)},7^{(3)},11^{(2)}$ & 13, 17, 19, 23, 29, 31, 37, 41, 43, 47, 53, 59\\
6 & $5^{(1)},7^{(1)},11^{(2)},13^{(2)},19^{(1)},23^{(4)},29^{(2)},$ & 17, 31, 41, 43, 53, 59\\
  & $37^{(2)},47^{(3)}$ & \\
7 & $2^{(1)},3^{(1)},5^{(1)},11^{(2)},13^{(2)},17^{(1)},19^{(3)},23^{(2)},29^{(2)},$ & ---\\
  & $31^{(2)},37^{(3)},41^{(3)},43^{(4)},47^{(4)},53^{(4)},59^{(3)}$ & \\
\bottomrule
\end{tabular}
\end{table}

\section{The computations}\label{sec:computations}

Three kinds of computation appear above: Gr\"obner basis calculations deciding emptiness of a
stratum over $\overline{\bF}_{p}$, exact rational verifications of the inequalities of
\S\ref{sec:char0}, and integer evaluations of the criteria of \S\S\ref{sec:sparse},
\ref{sec:uni}. Each was carried out twice by independent methods, and each is reproducible from the
description below. The scripts \texttt{strat2.py}, \texttt{table1.py}, \texttt{sparse.py},
\texttt{trin.py}, \texttt{valid2.py}, \texttt{N0.py}, \texttt{det.py} and \texttt{primes.py}
accompany this paper as supplementary material.

\subsection{Gr\"obner basis computations}
For $p\nmid n$ and $S\subseteq\{2,\dots,n-1\}$ we form
$f=x^{n}+\sum_{m\in S}a_{m}x^{n-m}$ over $\bF_{p}$, normalise $a_{m_{1}}=1$ for $m_{1}=\min S$
(legitimate by the weighted $\mathbb{G}_{m}$-action and algebraic closedness), compute
$\mathcal R_{m}=\Res_{x}(f,H_{n-m}(f))$ for $m\in S$, and decide whether
\[
  \Bigl(\,\mathcal R_{m}\ (m\in S),\ \ y\!\!\prod_{m\in S\setminus\{m_{1}\}}\!\! a_{m}-1\Bigr)\;=\;(1)
\]
in $\bF_{p}[a_{m}\ (m\in S\setminus\{m_{1}\}),y]$, by a Gr\"obner basis computation for the degree
reverse lexicographic order in {\sc Singular}
\cite{Singular}. The auxiliary variable $u$ implements the saturation $a_{m}\neq0$. Strata are
skipped when Lemma~\ref{lem:kummersupp} already forces $W_{S}=\emptyset$, which for small $p$
removes most of them. The whole computation for $n\le7$ and $p<60$
(Table~\ref{tab:validation}) takes a few minutes, and reproduces exactly the elements below $60$ of
$\bad(n)$ prime to $n$, for $n\le7$, as published in \cite{CLO2014}.
As a verification, the same computation was also run without the support stratification
(a single Gr\"obner basis in $n-2$ variables) for $n\le7$, with identical results.

Two comparisons were made against the Gr\"obner computation described above. First, for all
$n\le8$ and all $p<60$ with $p\nmid n$, the list of nonempty centred strata of size $1$ and $2$
predicted by the closed-form criteria was compared with the computed list; the two agreed in every
case, and for instance for $(n,p)=(8,3)$ both methods return exactly the two-element supports
$\{3,6\}$ and $\{5,7\}$. Second, to test Theorem~\ref{thm:twoterm} in the
generality in which it is stated, all $1320$ quadruples $(d,m_{1},m_{2},p)$ with
$3\le d\le10$, $1\le m_{1}<m_{2}\le d-1$ and $p<32$ were checked---$396$ of them with $m_{1}=1$
and $166$ with $p\mid d$---against a saturated resultant computation; there were no
disagreements.

\subsection{Exact rational and integer verifications}

\smallskip\noindent\emph{The inequalities of \S\ref{sec:char0}.}
\begin{enumerate}
\item $\Theta(d,m,r)<1$ for exactly the fifteen triples with $d\le13$ recorded in
Table~\ref{tab:exceptional}, and $\Theta(d,m,r)\ge1$ for all admissible triples with
$d\in\{14,15\}$.
\item For each of those fifteen triples, $L^{u}<R^{v}$; Table~\ref{tab:exceptional} records the
decimal values, and the script \texttt{trin.py} the exact rationals.
\item $\Xi(16,m)\ge1.499$ for $3\le m\le13$.
\item As a verification of Theorem~\ref{thm:main-char0}, $L^{u}\ne R^{v}$ was verified by
direct exact evaluation for all $4\le d\le120$ and all $2\le m_{1}<m_{2}\le d-1$
($273\,819$ triples), and $L^{u}<R^{v}$ was verified in floating point for all $d\le250$.
\end{enumerate}

\smallskip\noindent\emph{Section~\ref{sec:uni}.} The recursion \eqref{eq:recursion} was implemented in exact integer arithmetic. The following were
verified for every $d$ and every $\emptyset\neq S\subseteq\{1,\dots,d-1\}$ in the stated range:
$(-1)^{s}\sum_{i}z_{i}$ equals the determinant in \eqref{eq:Ddet} ($d\le11$) and equals
$\beta_{d}(S)$ computed by direct enumeration of $\mathfrak S_{d}$ ($d\le8$); the closed forms
$\beta_{d}(\{m\})=\binom{d}{m}-1$ and
$\beta_{d}(\{m_{1},m_{2}\})=\binom{d}{m_{2}}\bigl(\binom{m_{2}}{m_{1}}-1\bigr)-\binom{d}{m_{1}}+1$,
and $z_{i}=(-1)^{i}\binom{d}{i}$ for the full support ($d\le8$); the two symmetries of
Proposition~\ref{prop:symmetries} ($d\le13$); the identity
$\beta_{d}(S_{\mathrm{ev}})=A_{d}$ ($2\le d\le16$) and its maximality ($2\le d\le14$); the sign
pattern $(-1)^{i}z_{i}>0$ ($d\le12$); and the absence of any $S$ with $\sum_{i}z_{i}=0$ over $\bZ$
($d\le16$). Finally, for each $d\le12$ and each prime divisor $p$ of $\beta_{d}(S_{\mathrm{ev}})=A_{d}$
with $p\nmid z_i$, the witness of Corollary~\ref{cor:uni-badprimes} was constructed over $\bF_{p}$
and all $d-1$ Casas--Alvero conditions were checked directly, as were the witnesses for
$(d,p)=(7,37)$ and $(7,181)$.

\smallskip\noindent\emph{Table~\ref{tab:sparsebad}.} For each $d$, the one-element primes are the prime divisors of $\binom dm-1$, $2\le m\le d-1$, not
dividing $d$; the two-element primes were obtained by testing the criterion of
Theorem~\ref{thm:twoterm} for every prime $p<2\cdot10^{4}$ with $p\nmid d$ and every pair
$2\le m_{1}<m_{2}\le d-1$, using modular exponentiation only. The restrictions $p\nmid d$ and
$m_{1}\ge2$ are imposed only to keep the table within the centred setting of
Corollary~\ref{cor:strat}; as the discussion of $d=5$ above shows, dropping them produces further
bad primes.

\section{Concluding remarks}\label{sec:concluding}

The two reductions carried out above compose. The support stratification discards the
conditions that are vacuous once a root sits at the origin; fixing the scenario then makes the
surviving system unit lower triangular. Their combined effect is that the coefficients of a
hypothetical counterexample are not free parameters: once the support and the scenario are
prescribed, the coefficients are determined by binomial recursions in the recycled roots, and there
remain $t$ equations in $t-1$ unknowns, $t$ being the number of nonzero recycled roots. The
conjecture in degree $d$ is thereby reduced to a finite list of such systems, in each of which the
polynomial itself has disappeared.

For $t=1$ no unknowns remain and the reduction terminates in a single integer, which
Theorem~\ref{thm:pascal} identifies as the descent-set count $\beta_{d}(S)$. That the case does not
occur in characteristic zero is the statement that every subset of
$\{1,\dots,d-1\}$ is realised as a descent set, and the characteristics in which it does occur are
read off from a prime factorisation. None of this is visible in the original formulation, in which
the passage from $f$ to its centred normal form and thence to a scenario is what converts a
question about polynomials into one about permutations.

For $t\ge2$ the reduction leaves the $t-1$ ratios of the recycled roots as unknowns and the
elimination is no longer automatic. The case $t=2$ with a two-element support is
Theorem~\ref{thm:twoterm}, where it can still be carried out explicitly; the resulting identity
$L^{u}=R^{v}$ fails in characteristic zero by a definite margin, numerically at least $\log(32/25)$
on the logarithmic scale for $d\le250$. Whether the elimination can be organised uniformly in $t$,
so that each scenario contributes an explicit integer as the uniradical scenario contributes a
descent-set count, is the natural next question. An affirmative answer would reduce the conjecture
in each degree to the non-vanishing of a finite and explicitly computable list of integers, and
would identify the bad primes of $d$ with the primes occurring in their factorisations.

Two invariants of a bad prime are computed easily by these methods and appear not to have been
considered: the minimal size of a nonempty support stratum, recorded in the last column of
Table~\ref{tab:validation}, and the minimal number of recycled roots of a witness. For $\bad(5)$ the
first takes the values $1,1,3,2$ at the four smallest elements $p=2,3,7,11$. How they grow with $d$ and $p$ measures how much
of the bad-prime problem is reachable by the elementary arguments of \S\ref{sec:uni} and
\S\ref{sec:sparse}.

\subsection*{Acknowledgements}
The computations were performed with {\sc Singular} \cite{Singular}.

\subsection*{Declarations}

\noindent\textbf{Funding.} No funding was received for conducting this study.

\smallskip\noindent\textbf{Competing interests.} The author declares no competing interests.

\smallskip\noindent\textbf{Data availability.} No datasets were generated or analysed beyond those
reported in the paper. All numerical data appearing in Tables~\ref{tab:unibad}--\ref{tab:validation}
are reproduced in full by the scripts described below.

\smallskip\noindent\textbf{Code availability.} The eight scripts \texttt{strat2.py},
\texttt{table1.py}, \texttt{sparse.py}, \texttt{trin.py}, \texttt{valid2.py}, \texttt{N0.py},
\texttt{det.py} and \texttt{primes.py} are provided as supplementary material to this article. They
require Python~3.11 with \texttt{sympy}~1.14 and {\sc Singular}~4.3.2, and reproduce every
computational assertion of \S\ref{sec:computations}; the complete set runs in under one hour on a
single core.


\begin{thebibliography}{99}

\bibitem{deBruijn1970}
N.~G.~de Bruijn, \emph{Permutations with given ups and downs},
Nieuw Arch. Wisk. (3) \textbf{18} (1970), 61--65.

\bibitem{CasasAlvero2001}
E.~Casas-Alvero, \emph{Higher order polar germs}, J. Algebra \textbf{240} (2001), no.~1, 326--337.

\bibitem{CLO2014}
W.~Castryck, R.~Laterveer and M.~Ouna\"ies,
\emph{Constraints on counterexamples to the Casas-Alvero conjecture, and a verification in
degree 12}, Math. Comp. \textbf{83} (2014), no.~290, 3017--3037.

\bibitem{ChellaliSalinier2012}
M.~Chellali and A.~Salinier,
\emph{La conjecture de Casas Alvero pour les degr\'es $5p^{e}$},
An. Univ. Dun\u{a}rea de Jos Gala\c{t}i Fasc. II Mat. Fiz. Mec. Teor. \textbf{4(35)} (2012).

\bibitem{CimaGasullManosas2020}
A.~Cima, A.~Gasull and F.~Ma\~nosas,
\emph{Around some extensions of Casas-Alvero conjecture for non-polynomial functions},
Extracta Math. \textbf{35} (2020), no.~2, 221--228.

\bibitem{Singular}
W.~Decker, G.-M.~Greuel, G.~Pfister and H.~Sch\"onemann,
\textsc{Singular} 4-3-2 --- \emph{A computer algebra system for polynomial computations}, 2023,
\url{https://www.singular.uni-kl.de}.

\bibitem{DiazTocaGonzalezVega2006}
G.~M.~D\'iaz-Toca and L.~Gonz\'alez-Vega,
\emph{On analyzing a conjecture about univariate polynomials and their roots by using Maple},
Maple Conference 2006 Proceedings, Maplesoft, Waterloo, 2006, 81--98.

\bibitem{DraismaDeJong2011}
J.~Draisma and J.~P.~de Jong, \emph{On the Casas-Alvero conjecture},
Eur. Math. Soc. Newsl. \textbf{80} (2011), 29--33.

\bibitem{deFrutos2012}
R.~M.~de Frutos Mar\'in, \emph{Perspectivas aritm\'eticas para la conjetura de Casas-Alvero},
Ph.D. thesis, Universidad de Valladolid, 2012; as cited in \cite{SchaubSpivakovsky2024}.

\bibitem{Ghosh2026}
S.~Ghosh, \emph{Proof of the Casas-Alvero conjecture}, preprint,
\texttt{arXiv:2501.09272} (2025), v2 (2026); unrefereed.

\bibitem{BLSW2007}
H.-C.~Graf von Bothmer, O.~Labs, J.~Schicho and C.~van de Woestijne,
\emph{The Casas-Alvero conjecture for infinitely many degrees},
J. Algebra \textbf{316} (2007), no.~1, 224--230.

\bibitem{Kummer1852}
E.~E.~Kummer,
\emph{\"Uber die Erg\"anzungss\"atze zu den allgemeinen Reciprocit\"atsgesetzen},
J. Reine Angew. Math. \textbf{44} (1852), 93--146.

\bibitem{MacMahon1915}
P.~A.~MacMahon, \emph{Combinatory Analysis}, Vol.~I, Cambridge Univ. Press, 1915.

\bibitem{Niven1968}
I.~Niven, \emph{A combinatorial problem of finite sequences},
Nieuw Arch. Wisk. (3) \textbf{16} (1968), 116--123.

\bibitem{SchaubSpivakovsky2024}
D.~Schaub and M.~Spivakovsky,
\emph{On the set of bad primes in the study of the Casas-Alvero conjecture},
Res. Math. Sci. \textbf{11} (2024), no.~2, Paper No.~31.

\bibitem{SchaubSpivakovsky2025}
D.~Schaub and M.~Spivakovsky, \emph{On the Casas-Alvero conjecture},
J. Commut. Algebra \textbf{17} (2025), no.~2, 199 ff.; \texttt{doi:10.1216/jca.2025.17.199}.

\bibitem{Stanley2010}
R.~P.~Stanley, \emph{A survey of alternating permutations}, in: Combinatorics and Graphs,
Contemp. Math. \textbf{531}, Amer. Math. Soc., Providence, RI, 2010, 165--196.

\bibitem{Stanley2012}
R.~P.~Stanley, \emph{Enumerative Combinatorics}, Vol.~1, 2nd ed.,
Cambridge Stud. Adv. Math. \textbf{49}, Cambridge Univ. Press, 2012.

\bibitem{Yakubovich2014}
S.~Yakubovich, \emph{Polynomial problems of the Casas-Alvero type},
J. Class. Anal. \textbf{4} (2014), no.~2, 97--120.

\end{thebibliography}
\end{document}